\newtheorem{theorem}{Theorem}[section]
\newtheorem{lemma}[theorem]{Lemma}
\newtheorem{proposition}[theorem]{Proposition}
\newtheorem{definition}[theorem]{Definition}
\newtheorem{problem}[theorem]{Problem}
\newtheorem{claim}[theorem]{Claim}
\newcommand{\oldqed}{}
\def\endofClaim{\hfill\scalebox{.6}{$\Box$}}
\newenvironment{claimproof}[1][Proof]{
\renewcommand{\oldqed}{\qedsymbol}
\renewcommand{\qedsymbol}{\endofClaim}
\begin{proof}[#1]
}{
\end{proof}
\renewcommand{\qedsymbol}{\oldqed}
}
\newcommand{\By}[2]{\overset{\mbox{\tiny{#1}}}{#2}}
\newcommand{\leBy}[1]{    \By{#1}{\le} }
\newcommand{\Gnp}{G(n,p)}
\newcommand{\cH}{\mathcal{H}}
\newcommand{\cM}{\mathcal{M}}
\newcommand{\cJ}{\mathcal{J}}
\newcommand{\cK}{\mathcal{K}}
\newcommand{\cI}{\mathcal{I}}
\newcommand{\cT}{\mathcal{T}}
\newcommand{\PP}{\mathbb{P}}
\newcommand{\EE}{\mathbb{E}}
\newcommand{\eps}{\varepsilon}
\renewcommand{\subset}{\subseteq}
\newcommand*\patchAmsMathEnvironmentForLineno[1]{%
\expandafter\let\csname old#1\expandafter\endcsname\csname #1\endcsname
\expandafter\let\csname oldend#1\expandafter\endcsname\csname end#1\endcsname
\renewenvironment{#1}%
{\linenomath\csname old#1\endcsname}%
{\csname oldend#1\endcsname\endlinenomath}}%
\newcommand*\patchBothAmsMathEnvironmentsForLineno[1]{%
\patchAmsMathEnvironmentForLineno{#1}%
\patchAmsMathEnvironmentForLineno{#1*}}%
\begin{document}

\title{Triangles in randomly perturbed graphs}

\author[J.~B\"{o}ttcher]{Julia B\"{o}ttcher$^\ast$}
\author[O.~Parczyk]{Olaf Parczyk$^{\dag}$}
\author[A.~Sgueglia]{Amedeo Sgueglia$^\ast$}
\author[J.~Skokan]{Jozef Skokan$^{\ast,\S}$}

\thanks{$^\ast$ Department of Mathematics, London School of Economics, London, WC2A 2AE, UK. \\
	\textit{E-mail:} \{\texttt{j.boettcher|a.sgueglia|j.skokan}\}\texttt{@lse.ac.uk}}
\thanks{$^\S$ Department of Mathematics, University of Illinois at Urbana-Champaign,  Urbana, IL 61801, USA}
\thanks{$^\dag$ 
Institute of Mathematics, Freie Universität Berlin, Arnimallee 3, 14195 Berlin, Germany. \\
This research was conducted while OP was a visiting fellow at the London School of Economics and supported by the Deutsche Forschungsgemeinschaft (DFG, Grant PA 3513/1-1).\\
\textit{E-mail:} \texttt{parczyk@mi.fu-berlin.de}}

\begin{abstract}
	We study the problem of finding pairwise vertex-disjoint triangles in the
	randomly perturbed graph model, which is the union of any $n$-vertex graph $G$
	satisfying a given minimum degree condition and the binomial random graph $G(n,p)$. We prove
	that asymptotically almost surely $G \cup G(n,p)$ contains at least $\min
	\{\delta(G), \lfloor n/3 \rfloor \}$ pairwise vertex-disjoint triangles,
	provided $p \ge C \log n/n$, where $C$ is a large enough constant. This is a
	perturbed version of an old result of Dirac.
	
	Our result is asymptotically
	optimal and answers a question of Han, Morris, and Treglown~[RSA, 2021, no.~3, 480--516] in a
	strong form. We also prove a stability
	version of our result, which in the case of pairwise vertex-disjoint triangles
	extends a result of Han, Morris, and Treglown~[RSA, 2021, no.~3, 480--516].
	Together with a result of Balogh, Treglown, and
	Wagner~[CPC, 2019, no.~2, 159--176] this fully resolves the existence of
	triangle factors in randomly perturbed graphs.
	
	We believe that the methods introduced in this paper are useful for a variety
	of related problems:
	we discuss possible generalisations to 
	clique factors, cycle factors, and $2$-universality.
\end{abstract}

\maketitle

\section{Introduction and main result}

The problem of subgraph containment is one of the most studied questions in extremal and probabilistic combinatorics.
In this paper we are interested in conditions that guarantee the containment of pairwise vertex-disjoint triangles in different graph models.
In particular when a graph $G$ on $v(G)$ vertices contains $\lfloor v(G)/3 \rfloor$ pairwise vertex-disjoint copies of triangles, we say that $G$ contains a triangle factor.
We will consider minimum degree conditions in dense graphs, lower bounds on the edge-probability in random graphs, and a combination of both.

From Mantel's theorem on the maximum number of edges in a triangle-free graph it follows that any $n$-vertex graph with minimum degree larger than $n/2$ contains a triangle.
The first result on the containment of a triangle factor is due to Corrádi and Hajnal~\cite{corradi1963maximal}, who proved that any $n$-vertex graph $G$ with minimum degree $\delta(G) \ge 2n/3$ contains a triangle factor.
From this it is not hard to derive a more general result for graphs with smaller minimum degree, which was first proved by Dirac~\cite{dirac1963maximal}.

\begin{theorem}[Dirac~\cite{dirac1963maximal}]
	\label{thm:dirac}
	Any $n$-vertex graph $G$ with $n/2 \le \delta(G) \le 2n/3$ contains at least $2\delta(G)-n$ pairwise vertex-disjoint triangles.
\end{theorem}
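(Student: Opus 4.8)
The plan is to combine a maximality/exchange argument with Mantel's theorem. Suppose for contradiction that the maximum number $t$ of pairwise vertex-disjoint triangles in $G$ satisfies $t\le 2\delta-n-1$, fix a maximum such family $\mathcal{T}$, and let $W$ be the set of $w:=n-3t$ vertices uncovered by $\mathcal{T}$. Since $\delta\le 2n/3$ we have $2\delta-n\le\lfloor n/3\rfloor$, so $t<n/3$ and $w=n-3t\ge 4n-6\delta+3\ge 3$; in particular $W\neq\emptyset$.

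The first, easy, step is a warm-up bound that falls short by a factor $3$. By maximality $G[W]$ is triangle-free, so Mantel's theorem gives $\delta(G[W])\le w/2$. On the other hand every $x\in W$ has at most $|V(\mathcal{T})|=n-w$ neighbours outside $W$, hence at least $\delta-n+w$ neighbours inside $W$, so $\delta(G[W])\ge\delta-n+w$. Comparing the two bounds yields $w\le 2(n-\delta)$, i.e.\ $t=(n-w)/3\ge(2\delta-n)/3$. The loss comes entirely from the crude estimate ``$x$ may be adjacent to all three vertices of a triangle''.

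To remove this loss I would sharpen the neighbour count using maximality. Call $T\in\mathcal{T}$ \emph{saturated} for $x\in W$ if $x$ is adjacent to all three vertices of $T$, write $b(x)$ for the number of triangles of $\mathcal{T}$ saturated for $x$, and put $B=\sum_{x\in W}b(x)$. Since $x$ has at most two neighbours in each non-saturated triangle, $x$ has at most $2t+b(x)$ neighbours in $V(\mathcal{T})$, so $\deg_{G[W]}(x)\ge\delta-2t-b(x)$; summing over $x\in W$ and applying Mantel ($e(G[W])\le w^2/4$) gives $B\ge w\delta-2tw-w^2/2$, which after substituting $w=n-3t$ rearranges to $t\ge 2\delta-n-2B/w$. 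Combined with $t\le 2\delta-n-1$ this forces $B\ge w/2>0$: the family $\mathcal{T}$ has (many) saturated triangles.

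The remaining, and crucial, step is to show that a saturated triangle contradicts maximality. If $T=abc$ is saturated for $x\in W$, then $x,a,b,c$ span a $K_4$; the aim is to find two vertex-disjoint triangles inside $V(T)\cup W$ covering $\{a,b,c\}$ together with two further vertices of $W$, since then replacing $T$ by these two triangles produces $t+1$ pairwise disjoint triangles. For instance this succeeds if some $u\in W\setminus\{x\}$ is adjacent to two vertices of $T$, say $a$ and $b$, and $x$ has a neighbour $z\in W\setminus\{u\}$ with $z\sim c$ — then $\{u,a,b\}$ and $\{x,c,z\}$ do the job. The obstacle is that such finishing vertices need not be available for the chosen $x$ and $T$, so one is forced into an alternating/exchange process: the move $T\mapsto\{x,a,b\}$ pushes $c$ into $W$, the co-degree bound $|N(u)\cap N(v)|\ge 2\delta-n$ valid for every edge $uv$ then supplies new saturated or near-saturated configurations, and one must argue — via a potential function or an auxiliary graph, and using $B\ge w/2$ together with $\delta\ge n/2$ — that this process cannot terminate without yielding a genuine augmentation. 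Making this last step rigorous, rather than the Mantel-type counting, is where I expect the real difficulty to lie.
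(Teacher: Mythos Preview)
The paper does not prove Theorem~\ref{thm:dirac}; it is quoted as a classical result of Dirac and used only as background, so there is no proof in the paper to compare against. I can therefore only comment on your proposal on its own merits.

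Your counting up to and including the bound $B\ge w/2$ is correct and is a sensible sharpening of the crude Mantel step. The genuine gap is exactly where you locate it: a single saturated triangle does \emph{not} by itself contradict maximality. If $T=abc$ is saturated for $x$, replacing $T$ by $\{x,a,b\}$ merely swaps $c$ into $W$ and leaves $t$ unchanged; to gain a triangle you need a second triangle disjoint from the first inside $V(T)\cup W$, and nothing in your hypotheses forces, say, $c$ to have two adjacent neighbours in $W\setminus\{x\}$ (indeed for $\delta$ close to $2n/3$ a vertex of $T$ need not have many neighbours in $W$ at all). The ``alternating/exchange process'' you allude to is not just a technicality to be filled in: without a concrete potential function or a structural lemma that bounds $e(T,W)$ for every $T\in\mathcal T$ (this is closer to how the classical arguments proceed), the process you describe can cycle indefinitely, and the global bound $B\ge w/2$ is too weak to prevent this. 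As written, the proposal is an outline whose decisive step is missing.
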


Given an integer $m$ with $n/2 \le m \le 2n/3$, the tripartite complete graph with parts of size $2m-n$, $n-m$, and $n-m$ shows the result is best possible.
Moreover, a stability version of Theorem~\ref{thm:dirac} was proved by Hladk\'{y}, Hu, and Piguet~\cite{hladky2019komlos}.

Sparser graphs do not necessarily contain triangles, but most of them do if there are enough edges.
To determine a cut-off point for the containment of a subgraph in the \emph{binomial random graph} $G(n,p)$ we define a threshold function.
Before stating its definition, we remark that we say that a property $\mathcal{A}$ holds asymptotically almost surely (a.a.s.) in the random graph $G(n,p)$ if $\lim_{n \to \infty}\PP[G(n,p) \in \mathcal{A}]=1$.
Given a graph $H$, the function $\hat{p} : \mathbb{N} \rightarrow [0,1]$ is called the \emph{threshold} for the containment of $H$ in $G(n,p)$ if a.a.s. $H \subseteq G(n,p)$ for $p = \omega (\hat{p})$ and a.a.s. $H \not\subseteq G(n,p)$ for $p=o(\hat{p})$.
Already in one of the early papers on random graphs by Erd\H{o}s and R\'enyi~\cite{erdHos1960evolution} from $1960$ the threshold for a single triangle was determined as $1/n$.
However, the problem for a triangle factor is much harder and was eventually solved by Johannson, Kahn, and Vu~\cite{johansson2008factors} in $2008$, as part of a more general result, and the threshold was located at $n^{-2/3} \log^{1/3}n$, with an even sharper transition than in our definition of threshold.
When one requires $\eps n$ to $(1-\eps)n$ pairwise vertex-disjoint triangles, the problem is easier and the threshold is $n^{-2/3}$ as proved by Ruci\'nski~\cite{rucinski1992matching} in $1992$.
Note that the the spanning version requires an extra $\log^{1/3}n$, and this logarithmic term is essential to ensure that a.a.s.~every vertex is contained in a triangle.

Bohman, Frieze, and Martin~\cite{bohman2003} combined the random graph model and the deterministic minimum-degree model by asking how many random edges one needs to add to a dense graph with small linear minimum degree, such that it contains a Hamilton cycle.
More precisely, they introduced the model of \emph{randomly perturbed graphs} as the union of an $n$-vertex graph $G_\alpha$ with minimum degree at least $\alpha n$ and the random graph $G(n,p)$.
Given $H$ and fixed $\alpha$, we are then interested in lower bounds on the edge-probability $p$ such that a.a.s.~$G_\alpha \cup G(n,p)$ contains $H$ for any $G_\alpha$.
A lower bound $\hat{p}$ is optimal when in addition there exists a $G_\alpha$ for which a.a.s.~$G_\alpha \cup G(n,p)$ does not contain $H$ if $p=o(\hat{p})$, in which case we call $\hat{p}$ the threshold for the containment of $H$ in the randomly perturbed model.
Note that this threshold $\hat{p}$ only depends on $\alpha$ and $H$.
In recent years there has been a lot of work on embeddings of spanning graphs in randomly perturbed graphs.
Most results in this model focus on the extreme cases with small $\alpha>0$~\cite{balogh2019tilings, bohman2004adding, bottcher2017embedding, joos2020spanning, krivelevich2016cycles, krivelevich2017bounded, krivelevich2006smoothed, mcdowell2018hamilton} or small $p$~\cite{antoniuk2020high, bedenknecht2019powers, dudek2020powers, nenadov2018sprinkling}.
More recently, Han, Morris, and Treglown~\cite{han2019tilings} started a more thorough investigation of the intermediate regime.
The goal is to determine the perturbed threshold for every $\alpha$ from $\alpha=0$, where we can rely only on $G(n,p)$, to the $\alpha$ where the structure already exists in $G_\alpha$ alone and $p=0$ is sufficient.

It is easy to see that with $0<\alpha \le 1/2$, $G_\alpha \cup G(n,p)$ a.a.s.~contains a triangle when $p \ge C/n^2$ and $C$ is a sufficiently large constant depending on $\alpha$.
This is asymptotically optimal, as $G(n,p)$ with $p = o(n^2)$ a.a.s.~is empty.
Together with the cases for $\alpha=0$ and $\alpha > 1/2$ discussed above, this completes all the range of $\alpha$ for the containment of a single triangle. 
For a triangle factor we already know the threshold when $\alpha=0$ from the random graph model, and when $\alpha \ge 2/3$ we do not need random edges at all.
For any $\alpha>0$, Balogh, Treglown, and Wagner~\cite{balogh2019tilings} showed that $p \ge C n^{-2/3}$ is always sufficient (with $C$ depending on $\alpha$).
This is asymptotically optimal in the case $0<\alpha<1/3$, as with $G_\alpha$ the complete bipartite graph with classes of size $\alpha n$ and $(1-\alpha n)$, we need a linear number of triangles with all edges from the random graph, for which $n^{-2/3}$ is the threshold as discussed above.
For $1/3 < \alpha <2/3$, Han, Morris, and Treglown~\cite{han2019tilings} proved that then already $p \ge C/n$ is enough for a triangle factor.
Again this is asymptotically optimal as, when $G_\alpha$ is the complete tripartite graph with classes of size $\alpha n/2$, $\alpha n/2$ and $(1-\alpha)n$, we need a linear number of edges from  $G(n,p)$.
Together these results can be summarised as follows.

\begin{theorem}[Balogh, Treglown, and Wagner~\cite{balogh2019tilings}  and Han, Morris, and Treglown~\cite{han2019tilings}]
	\label{thm:k3_factor}
	Given any $\alpha \in (0,1/3) \cup (1/3,2/3)$ there exists $C>0$ such that the following holds. 
	For any $n$-vertex graph $G$ with minimum degree $\delta(G) \ge \alpha n$, a.a.s.~there is a triangle factor in~$G \cup G(n,p)$ provided that $p \ge C n^{-2/3}$ if $\alpha \in (0,1/3)$ and $p \ge C n^{-1}$ if $\alpha \in (1/3,2/3)$.
\end{theorem}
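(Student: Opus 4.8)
The plan is to handle the two ranges of $\alpha$ separately, in both cases via the absorption method. Fix $\alpha$ and choose a constant $\eps>0$ so that $\alpha n$ is bounded away from $0$, $n/3$ and $2n/3$ by at least $\eps n$; let $\beta=\beta(\eps)>0$ be a small constant and split $G(n,p)=G_1\cup G_2$ with $G_1,G_2\sim G(n,p/2)$, reserving $G_2$ for the absorbing structure. The argument rests on two statements. The first is an \emph{absorbing lemma}: a.a.s.\ there is a set $A\subseteq V(G)$ with $|A|\le\beta n$ and $3\mid|A|$ such that $G[A]$ has a triangle-factor and, for every $W\subseteq V(G)\setminus A$ with $|W|\le\beta^2n$ and $3\mid|W|$, the graph $(G\cup G_2)[A\cup W]$ has a triangle-factor. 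The second is an \emph{almost-tiling lemma}: a.a.s.\ $(G\cup G_1)[V(G)\setminus A]$ contains a family of vertex-disjoint triangles covering all but at most $\beta^2n$ of its vertices. Given both, one applies the almost-tiling lemma, lets $W$ be the set of uncovered vertices (deleting at most two triangles so that $3\mid|W|$), and then applies the absorbing property of $A$ to $W$; the union of the two triangle families is the desired triangle-factor of $G\cup G(n,p)$. The bulk of the work is in the almost-tiling lemma, whose proof differs substantially between the two ranges, and I address it first.

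For $\alpha\in(1/3,2/3)$, so $p\ge Cn^{-1}$, I would apply Szemer\'edi's regularity lemma to $G$ to obtain a reduced graph $R$ on a bounded number of clusters with $\delta(R)\ge(1/3+\eps/2)|R|$. Note that $R$ can be triangle-free (as when $G$ is complete bipartite with parts of sizes $(1/3+\eps)n$ and $(2/3-\eps)n$), so the key structural input is that $R$ nevertheless has a family of vertex-disjoint triangles and cherries (paths on three vertices) covering all but $O_\eps(1)$ of its clusters. Each piece is then blown up: a triangle of clusters is filled using $G$-edges from the three regular pairs; a cherry with centre cluster $U$ and leaf clusters $V_1,V_2$ is filled by triangles $\{v_1,u,v_2\}$ with $v_1u,v_2u\in E(G)$ and $v_1v_2\in E(G_1)$, which is possible because $UV_1$ and $UV_2$ are regular and, since $p/2\ge(C/2)n^{-1}$, the random graph $G_1$ restricted to $V_1\cup V_2$ has a matching covering all but a $\xi(C)$-proportion of $V_1\cup V_2$, where $\xi(C)\to0$ as $C\to\infty$; a Hall-type (or hypergraph-matching) argument turns this into a near-perfect triangle packing of the blown-up cherry. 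Summing over the pieces and adding the regularity exceptional set, the uncovered set has size $o(n)+\xi(C)n$, which is below $\beta^2n$ once $C=C(\eps,\beta)$ is large. This is exactly where the hypothesis $p\ge Cn^{-1}$ is used.

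For $\alpha\in(0,1/3)$, so $p\ge Cn^{-2/3}$, the graph $G$ may itself be triangle-free, so $G_1$ has to create almost all triangles. Here I would use a quantitative version of Ruci\'nski's theorem~\cite{rucinski1992matching}: for $p\ge C'n^{-2/3}$, a.a.s.\ the restriction of $G(n,p)$ to any vertex set of linear size contains a triangle packing covering all but a $\xi(C')$-proportion of it, with $\xi(C')\to0$ as $C'\to\infty$. This is combined with a maximum matching $M$ of $G$: using a Hall/system-of-distinct-representatives argument in the auxiliary bipartite graph joining each edge $xy\in M$ to the vertices $w$ with $w\in N_G(x)$ and $xw\in E(G_1)$ (or symmetrically) — whose degrees are $\gg 1$ since $|N_G(x)|\ge\alpha n$ and $p\gg n^{-1}$ — one extends a suitable sub-family of $M$ to vertex-disjoint triangles, each using one extra vertex outside $V(M)$, leaving a linear-sized remainder that the Ruci\'nski-type packing covers up to a $\xi(C)$-proportion, hence at most $\beta^2n$ for $C=C(\alpha,\beta)$ large. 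The exponent $-2/3$ is forced: taking $G$ complete bipartite with parts of sizes $\alpha n$ and $(1-\alpha)n$, a linear number of the triangles in any factor must lie entirely inside the larger part and therefore use random edges only, and $n^{-2/3}$ is the threshold for such an almost-factor of $G(n,p)$.

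Finally, the absorbing lemma is proved by the standard machinery: build $A$ as a random sub-selection of a large family of small absorbers, where an absorber for a triple $\{x,y,z\}$ is a triangle-factorable vertex set $S$, with $3\mid|S|$, such that $S\cup\{x,y,z\}$ is also triangle-factorable; one checks that in $G\cup G_2$ every triple has a robustly large supply of absorbers (adapted to the regime: resting on $G$-edges plus a bounded number of $G_2$-edges when $\alpha>1/3$, and on more $G_2$-edges when $\alpha<1/3$), and a probabilistic argument selects an $A$ with the stated absorbing property while keeping $|A|\le\beta n$. I expect the main obstacle to be the almost-tiling lemma in the range $\alpha<1/3$: producing, using only $p=\Theta(n^{-2/3})$ random edges and with $G$ possibly triangle-free, a triangle packing whose uncovered set is at most $\beta^2n$, which forces one to combine the (quantitative) Ruci\'nski-type input about almost-factors of $G(n,p)$ at the threshold with a careful analysis of the matching structure of $G$ — and this is where the papers~\cite{balogh2019tilings,han2019tilings} concentrate their effort.
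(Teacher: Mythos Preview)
The paper does not prove Theorem~\ref{thm:k3_factor}. It is stated purely as background, attributed to~\cite{balogh2019tilings} and~\cite{han2019tilings}, and no argument for it is given anywhere in the paper; the paper's own contribution is the complementary case $\alpha=1/3$ (Theorems~\ref{thm:zero} and~\ref{thm:main}). So there is no ``paper's own proof'' to compare your proposal against.

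That said, a brief comment on your sketch. The high-level architecture (absorption plus almost-tiling) is indeed how the cited papers proceed, and your treatment of the range $\alpha\in(1/3,2/3)$ --- regularity, a reduced-graph cover by triangles and cherries, and completing cherries using random matchings at $p\ge C/n$ --- is close in spirit to what Han, Morris and Treglown do, and also to the machinery the present paper develops (Lemmas~\ref{lem:tripartite2} and~\ref{lem:bipartite}). Your sketch for $\alpha\in(0,1/3)$ is looser: the step where you ``extend a sub-family of a maximum matching $M$ to vertex-disjoint triangles'' via a Hall-type argument does not by itself control how many vertices remain outside both $V(M)$ and the triangles you build, and the subsequent appeal to a ``quantitative Ruci\'nski'' almost-factor on that remainder needs the remainder to be of linear size and disjoint from what you have used --- neither of which your outline guarantees. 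The actual proof in~\cite{balogh2019tilings} is more delicate here, and in any case is not reproduced in the present paper.
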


In this paper we close the remaining open case for the triangle factor, that is
$\alpha = 1/3$, for which we show that $p \ge C \log n/n$ is sufficient.

\begin{theorem}
	\label{thm:zero}
	There exists $C>0$ such that for any $n$-vertex graph $G$ with minimum degree $\delta(G) \ge n/3$, we can a.a.s.~find a triangle factor in $G \cup G(n,p)$, provided that $p\ge C \log n /n$. 
\end{theorem}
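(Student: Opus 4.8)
The plan is to use the absorption method, in the variant that has become standard for triangle-factors in randomly perturbed graphs. Write $p = C\log n/n$. The first step is to reserve a small random subset of vertices to build an \emph{absorbing structure}: I would set aside a set $A$ of size $|A| = \Theta(n)$ chosen so that for \emph{every} vertex $v$ there are $\Omega(n)$ ``absorbers'' in $A$ --- short gadgets $L_v \subseteq A$ (each a bounded number of vertices) with the property that both $L_v$ and $L_v \cup \{v\}$ span a triangle-factor on their vertex sets. The key point where $\delta(G) \ge n/3$ and the random edges interact is the construction of a single absorber: given $v$, we want a triangle $xyz$ together with a triangle on $\{x,y,v\}$-type replacement, i.e. a configuration that can ``swap in'' $v$. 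Here one uses that $v$ and any fixed second vertex have $\ge n/3 - o(n)$ common $G$-neighbours, and then a constant number of random edges of $G(n,p)$, present with probability bounded below by a constant, complete the gadget; a Chernoff/second-moment argument then gives $\Omega(n)$ vertex-disjoint candidate absorbers for each $v$ inside $A$, and a random selection (or a Lovász Local Lemma argument) yields the absorbing set $\cA$ that can absorb any ``small leftover'' subset $W \subseteq V \setminus \cA$ with $|W| \le \beta n$ and $|W|$ divisible by $3$.

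The second step is an \emph{almost-perfect tiling}: after removing the absorbing set and a small ``buffer,'' I would cover all but $o(n)$ --- in fact all but $\beta n$ for a tiny constant $\beta$ --- of the remaining vertices by vertex-disjoint triangles. For this one can invoke the Balogh--Treglown--Wagner machinery or, more cheaply, a direct argument: since $\delta(G)\ge n/3$, Dirac's theorem (Theorem~\ref{thm:dirac}) is not directly applicable as we are exactly at the boundary, so instead I would cover greedily using triangles with two vertices in $G$ and one ``random'' vertex, or run the nibble-type argument of Ruci\'nski giving an almost-triangle-factor in $G(n', n^{-2/3})$ on a suitable sublinear portion while using $G$-edges on the bulk. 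The cleanest route: partition off a random set $R$ of size $\rho n$ with $\rho$ small; on $V \setminus (\cA \cup R)$ the minimum degree into that set is still $\ge n/3 - o(n)$, which after a standard regularity/greedy matching-type argument (or an application of the Hajnal--Szemerédi-flavoured results combined with the random edges to fix parity classes) yields disjoint triangles covering all but $\beta n$ vertices.

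The third step is the finish: let $W$ be the set of uncovered vertices together with leftover vertices of $R$; we have $|W| \le \beta n$ and, after discarding at most two vertices into already-formed triangles via the random edges (to fix divisibility by $3$), $|W| \equiv 0 \pmod 3$. Then the absorbing property of $\cA$ completes $\cA \cup W$ into a triangle-factor, and together with the almost-perfect tiling this gives a triangle-factor of $G \cup G(n,p)$.

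The main obstacle is the absorber construction at the exact threshold $\alpha = 1/3$: when $\alpha > 1/3$ one has a linear surplus in common neighbourhoods and the gadgets are abundant even before using randomness, whereas at $\alpha = 1/3$ the extremal bipartite-like configurations (e.g. $G$ close to $K_{n/3, 2n/3}$ or to a blow-up of a triangle with one part slightly over $n/3$) force the absorbers to genuinely use random edges, and one must check that \emph{every} vertex $v$ --- including those in a sparse side of such a near-extremal $G$ --- still has $\Omega(n)$ absorbers. This is precisely why the $\log n$ factor appears: to guarantee via $G(n,p)$ that the ``last few'' vertices of each congruence/structural class can be triangulated, each vertex needs $\Omega(\log n)$ random-edge incidences a.a.s., matching the lower-bound construction in the discussion before Theorem~\ref{thm:k3_factor}. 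Handling the near-extremal cases separately (a stability argument: either $G$ is ``far'' from extremal, where a robust greedy/absorption works with $p \ge C/n$, or $G$ is close to one of the few extremal graphs, where one exploits the explicit structure together with $p \ge C\log n/n$) is the technical heart of the proof.
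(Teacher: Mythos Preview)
Your proposal follows a different route from the paper and, as written, does not close the gap you yourself identify at the end.

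The paper does not use absorption. Theorem~\ref{thm:zero} is an immediate corollary of Theorem~\ref{thm:main}, which is proved by a stability split. If $G$ is not $(\alpha,\beta)$-stable (Theorem~\ref{thm:non-extremal}), then already $p\ge C/n$ suffices: one applies the regularity lemma, shows via Lemma~\ref{lem:stable_cluster} that the reduced graph has a matching of size $(\alpha+2d)t$, decomposes into cherries and matching edges, and tiles each piece via Lemmas~\ref{lem:tripartite2} and~\ref{lem:bipartite}. If $G$ is $(\alpha,\beta)$-stable (Theorem~\ref{thm:extremal}) --- so $G$ is close to $K_{\alpha n,(1-\alpha)n}$ --- one balances the two sides and cleans out low-degree vertices, reducing to a single super-regular cherry $(B',A,B'')$ in which the $B'$--$B''$ edges must come from $G(n,p)$. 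The new ingredient is Lemma~\ref{lem:tripartite}: a random-greedy process selects a matching $M\subseteq G(B',B'',p)$ so that the auxiliary bipartite graph $H_G(M,A)$ (with $m\in M$ adjacent to $a\in A$ iff $a$ closes $m$ to a $G$-triangle) is itself super-regular (Lemma~\ref{lem:trireg}); the $\log n$-factor is spent exactly once, to extend $M$ to a perfect matching on $B'\cup B''$ via Lemma~\ref{lem:matching}, and then a perfect matching in $H_G(M,A)$ finishes.

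Two concrete problems with your absorber sketch. First, ``$v$ and any fixed second vertex have $\ge n/3-o(n)$ common $G$-neighbours'' is false already in $K_{n/3,2n/3}$: a vertex in the small side and one in the large side have no common $G$-neighbour. Second, ``a constant number of random edges of $G(n,p)$, present with probability bounded below by a constant, complete the gadget'' is also false: each prescribed random edge is present with probability $p=o(1)$, so a fixed $k$-edge template appears with probability $p^{k}$, and the usual $\Theta(n^{|L_v|})$ count of candidate absorbers is depressed by exactly that factor. Whether some carefully designed absorber family nonetheless clears the bar in the extremal graph is precisely the question your last paragraph raises; you name it as ``the technical heart'' but supply no mechanism. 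In the paper that mechanism is Lemma~\ref{lem:tripartite} and the auxiliary super-regular pair $(M,V)$ behind it --- not an absorbing structure at all.
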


To see
that the bound on $p$ in Theorem~\ref{thm:zero} is asymptotically optimal
consider the complete bipartite graph $G=K_{n/3,2n/3}$ and denote the partition
classes by $A$ and $B$ with $|A|<|B|$. By Markov's inequality and with $p \le
\tfrac 12 \log n/n$ a.a.s.~there are $O(\log^4 n)$ triangles within $B$ and
a.a.s.~there is a polynomial number of vertices in the class $B$ without any
neighbours in $B$~\cite[Theorem~6.36]{JLR}. However, for a triangle factor to
exist, for each triangle with at most one vertex in $B$, there must be at least
one triangle fully contained in $B$. In conclusion, a.a.s.~$G \cup G(n,p)$ does
not contain a triangle factor and the $\log n$-term is needed for local reasons
similarly as discussed above for the triangle factor in $G(n,p)$.

This result closes the problem of determining, given $\alpha \in [0,1]$, the
threshold for a triangle factor in $G_\alpha \cup G(n,p)$ and we refer to
Table~\ref{fig:K3-factor} for a summary. Note that the threshold is within a
constant factor in the intervals $(0,1/3)$ and $(1/3,2/3)$, while it jumps at
$\alpha=0$, $1/3$, and $2/3$.
\begin{table}[htbp]
	\begin{center}
		\begin{tabular}{||c||c c c c c||} 
			\hline
			$\alpha$ & $\alpha=0$ & $0<\alpha<1/3$ & $\alpha=1/3$ & $1/3<\alpha<2/3$ & $2/3 \le \alpha$\\
			\hline\hline
			$p$ & $n^{-2/3} \log^{1/3} n$ & $n^{-2/3}$ & $n^{-1} \log n$ & $n^{-1}$ & $0$ \\ 
			\hline
		\end{tabular}	
	\end{center}
	\caption{\label{fig:K3-factor}Triangle factor containment in $G_\alpha \cup G(n,p)$, where $\delta(G_\alpha) \ge \alpha n$.}
\end{table}

Theorem~\ref{thm:zero} is a special case of the following theorem, which is our
main result. It answers the question which minimum degree condition is needed
in the randomly perturbed graph model with $p= C\log n/n$ to enforce $k$
vertex-disjoint triangles for any~$1\le k\le \lfloor n/3\rfloor$.

\begin{theorem}[Main result]
	\label{thm:main}
	There exists $C>0$ such that for any $n$-vertex graph $G$ we can a.a.s.~find at least $\min\{ \delta(G), \lfloor n/3 \rfloor \}$ pairwise vertex-disjoint triangles in $G \cup G(n,p)$, provided that $p\ge C \log n /n$. 
\end{theorem}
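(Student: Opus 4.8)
The plan is to leverage Theorem~\ref{thm:zero}. If $\delta(G)\ge n/3$ then $\min\{\delta(G),\lfloor n/3\rfloor\}=\lfloor n/3\rfloor$ and Theorem~\ref{thm:zero} already produces a triangle-factor, so assume henceforth $d:=\delta(G)<n/3$ and that the goal is $d$ pairwise disjoint triangles in $G\cup\Gnp$. Two easy regimes come off at once. When $d\le c\log^3 n$ for a suitable constant $c$, a second-moment computation shows that a.a.s.\ $\Gnp$ by itself contains $\Theta(\log^3 n)$ triangles which, at density $p=\Theta(\log n/n)$, are a.a.s.\ pairwise vertex-disjoint, so $d$ of them suffice. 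And if $G$ itself contains $d$ pairwise disjoint triangles we are done, so fix a maximum family $\cT$ of pairwise disjoint triangles in $G$ with $t:=|\cT|<d$ (note that $G-V(\cT)$ is then triangle-free); it remains to add $d-t$ further disjoint triangles, avoiding $V(\cT)$, using the random edges.

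The core of the argument is to build triangles of the form $\{v,x,y\}$ with $vx,vy\in E(G)$ and $xy\in E(\Gnp)$, i.e.\ an apex $v$ together with a random edge inside $N_G(v)$. Since $|N_G(v)|\ge d$ for every $v$, the expected number of such (triangle, apex) configurations is of order $\sum_v\binom{\deg_G(v)}{2}p=\Omega(d^2\log n)$, which is comfortably larger than $d$ once $d$ leaves the polylogarithmic range. Using the degree- and codegree-concentration of $\Gnp$ for $p\ge C\log n/n$ with $C$ large --- and, when $d$ is linear in $n$, also a direct application of Theorem~\ref{thm:zero} to an induced subgraph $G[W]$ with $|W|=3d$ and $\delta(G[W])\ge|W|/3$, fed with a fresh independent copy of the random edges obtained by splitting $\Gnp$ --- one extracts a matching of $\Omega(n)$ disjoint configurations in the associated $3$-uniform hypergraph, from which $d$ configurations avoiding the at most $3d$ vertices of $V(\cT)$ remain. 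This works for "most" $G$ because then the neighbourhoods $N_G(v)$ are both large enough to carry random edges and spread out enough for the hypergraph-matching step.

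The remaining difficulty, which I expect to be the main obstacle, is the near-extremal regime: $G$ has very unbalanced degrees, or is close to an extremal configuration such as the complete bipartite graph $K_{d,n-d}$ or a blow-up of a short odd cycle, so that the sets $N_G(v)$ are too small or too structured to run the argument above, while $d$ may be neither polylogarithmic nor linear (graphs like $K_{d+1}\cup K_{d,d}$ straddle the line between "already $\ge d$ triangles inside $G$" and "complete-bipartite-like"). Here the plan is a stability dichotomy: either the configuration argument still delivers $d$ disjoint triangles, or $G$ is close enough to a known extremal graph to argue by hand --- typically by taking a matching of $\Gnp$ of size $\ge d$ inside a linear-sized part $B$ of $G$ (which exists a.a.s.\ for $p\ge C\log n/n$ with $C$ large) and closing $d$ of its edges with private common $G$-neighbours drawn from a set $A$ of size $d$ --- all the while combining these triangles with those of $\cT$ and with the apex configurations and carefully tracking which vertices are used as triangle-vertices and which as apices, so that the final family is pairwise disjoint.
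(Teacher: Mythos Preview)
Your plan has a genuine gap in the linear-degree range. You propose, when $d$ is linear in $n$, to apply Theorem~\ref{thm:zero} to an induced subgraph $G[W]$ with $|W|=3d$ and $\delta(G[W])\ge|W|/3$; but you give no argument that such a $W$ exists, and in general it need not. If $G=K_{d,n-d}$ with $d=\alpha n$ and $\alpha<1/3$, then any $W$ of size $3d$ that omits even one vertex of the small side forces the big-side vertices in $W$ to have degree $<d$ in $G[W]$; for slight perturbations of this example, or for other near-extremal graphs, no suitable $W$ exists at all. More to the point, producing such a $W$ is essentially equivalent to the theorem you are trying to prove, so invoking it is circular. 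Separately, your ``matching of $\Omega(n)$ disjoint configurations in the associated $3$-uniform hypergraph'' is asserted, not proved: extracting \emph{pairwise disjoint} apex-triangles from the $\Omega(d^2\log n)$ configurations requires controlling how the neighbourhoods $N_G(v)$ overlap, and this is precisely where the work lies (and where naive greedy or hypergraph-matching arguments break down when the $N_G(v)$ are highly nested, as in the extremal examples).

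The paper handles this by a different decomposition. It splits according to whether $m=\min\{\delta(G),\lfloor n/3\rfloor\}$ is sublinear ($m\le n/256$) or linear. In the sublinear range (Theorem~\ref{thm:sublinear}) it does use your apex idea, but makes the disjointness rigorous via a deterministic structural lemma (Lemma~\ref{lem:manystars}) that produces many \emph{vertex-disjoint} stars in $G$ whose leaf-sets then host the random edges; this is exactly what your hypergraph-matching sentence is missing. In the linear range the paper does \emph{not} pass to a subgraph; instead it applies the regularity lemma to all of $G$ and runs a genuine stability dichotomy (Theorems~\ref{thm:non-extremal} and~\ref{thm:extremal}): if $G$ is far from $K_{\alpha n,(1-\alpha)n}$ the reduced graph carries a large matching (Lemma~\ref{lem:stable_cluster}) which is tiled by super-regular cherries and edges, and if $G$ is close to $K_{\alpha n,(1-\alpha)n}$ one works directly with the near-bipartite structure. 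Your final paragraph gestures at this dichotomy but supplies neither side; the regularity and stability machinery is what actually replaces your unavailable $G[W]$.
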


This is a perturbed version of the result by Dirac on vertex-disjoint triangles in dense graphs (Theorem~\ref{thm:dirac}).
We are not aware of other results in the randomly perturbed graph model that
consider large but not spanning structures.

Theorem~\ref{thm:main} is basically optimal in terms of the number of triangles, because given $1 \le m < n/3$, then $G=K_{m,n-m}$ has minimum degree $\delta(G)=m$, and there can be at most $m$ pairwise vertex-disjoint triangles using each at least one edge of $G$, and at most $O(\log^4 n)$ additional triangles solely coming from $G(n,p)$.
The bound on $p$ is asymptotically optimal as it is in Theorem~\ref{thm:zero}, but we remark that when $m$ is `significantly smaller' than $n/3$, then already $p \ge C / n$ is sufficient to a.a.s.~find $m$ pairwise vertex-disjoint triangles in $K_{m,n-m} \cup G(n,p)$.
We call $K_{m,n-m}$ the extremal graph.
See the concluding remarks (Section~\ref{sec:concluding}) for more details.

In addition, we prove a stability version (Theorem~\ref{thm:non-extremal}) of
our main result, which allows us to work with edge probability only $p=C/n$ for
graphs $G$ that are not `close'\footnote{Basically this is in edit distance, but
	in addition we require that no vertex can have a degree in $G$ that is much
	smaller than the corresponding degree in the extremal graph. A precise
	condition is given in Definition~\ref{def:stability}.} to the extremal graph.

We believe that the methods we introduce for proving our results are valuable
for other questions concerning randomly perturbed graphs; we discuss some possible
directions and open problems in Section~\ref{sec:concluding}.
One important novel ingredient in our proofs is that we can find a triangle
factor in a graph on three vertex sets $U, V, W$ of the same size, where $(V,U)$
and $(V,W)$ are super-regular pairs and between $U$ and $W$ we have random edges
with probability $p \ge C \log n/n$ (see Lemma~\ref{lem:tripartite}). 

\subsection*{Organisation}
The rest of this paper is organised as follows.
In Section~\ref{sec:main} we state our Stability Theorem
(Theorem~\ref{thm:non-extremal}), one result (Theorem~\ref{thm:extremal}) that
deals with the `extremal' case of Theorem~\ref{thm:main}, and one result
(Theorem~\ref{thm:sublinear}) that deals with the case of small minimum degrees; we shall show that these
three theorems together imply our main result (Theorem~\ref{thm:main}).

In Section~\ref{sec:tools} we then introduce some tools that we will use later.
In Section~\ref{sec:overview}, we outline the proofs of
Theorems~\ref{thm:non-extremal},~\ref{thm:extremal}, and~\ref{thm:sublinear} and
we state the auxiliary lemmas we use in their proofs.
In Section~\ref{sec:extremal} we prove Theorem~\ref{thm:extremal}, in
Section~\ref{sec:non-extremal} we prove Theorem~\ref{thm:non-extremal}, and in
Section~\ref{sec:fewtriangles} we
prove Theorem~\ref{thm:sublinear}.
The auxiliary lemmas are proved in Section~\ref{sec:auxiliary}.

Finally we give concluding remarks and pose some open questions in
Section~\ref{sec:concluding}. A few supplementary proofs are moved to
Appendix~\ref{sec:appendix}.

\subsection*{Notation}
For numbers $a$, $b$, $c$, we write $a = b \pm c$ for $b-c \le a \le b+c$.
Moreover, for non-negative $a$,$b$ we write $0<a \ll b$, when we require $a \le f(b)$ for some function $f \colon \mathbb{R}_{>0} \mapsto \mathbb{R}_{>0}$.
We will only use this to improve readability and in addition to the precise dependencies of the constants.

We use standard graph theory notation.
For a graph $G$ on vertex set $V$ and two disjoint sets $A$, $B \subset V$, let $G[A]$ be the subgraph of $G$ induced by $A$, $G[A,B]$ be the bipartite subgraph of $G$ induced by sets $A$ and $B$, $e(A)$ be the number of edges with both endpoints in $A$ and $e(A,B)$ be the number of edges with one endpoint in $A$ and the other one in $B$.
We will also use standard Landau notation for $f,g : \mathbb{N} \rightarrow \mathbb{R}_{>0} : f = o(g)$ if and only if $\lim_{n \to \infty}f(n)/g(n)=0$ and $f = \omega(g)$ if and only if $g = o(f)$.

\section{Stability version and proof of the main result}
\label{sec:main}

We already discussed how the probability $p \ge C \log n/n$ can not be significantly lowered in Theorem~\ref{thm:main}.
However, we are able to show that when the minimum degree of $G$ is linear in $n$, then with $m=\min \{\delta(G),n/3\}$, the complete bipartite graph $K_{m,n-m}$ is the unique extremal graph for Theorem~\ref{thm:main}, in the sense that if the graph $G$ is not `close' to $K_{m,n-m}$ then a.a.s.~$G \cup G(n,p)$ contains $m$ pairwise vertex-disjoint triangles already at probability $p \ge C/n$ and we can even assume a slightly smaller minimum degree on $G$.
To formalise this we introduce the following notion of stability for an $n$-vertex graph $G$.

\begin{definition}[$(\alpha,\beta)$-stable]
	\label{def:stability}
	For $0 < \beta < \alpha < 1/2$, we say that an $n$-vertex graph $G$ is \emph{$(\alpha,\beta)$-stable} if there exists a partition of $V(G)$ into two sets $A$ and $B$ of size $|A|=(\alpha \pm \beta) n$ and $|B|=(1-\alpha \pm \beta) n$ such that the minimum degree of the bipartite subgraph $G[A,B]$ of $G$ induced by $A$ and $B$ is at least $\alpha n/4$, all but at most $\beta n$ vertices from $A$ have degree at least $|B|-\beta n$ into $B$, all but at most $\beta n$ vertices from $B$ have degree at least $|A|-\beta n$ into $A$, and $G[B]$ contains at most $\beta n^2$ edges.
\end{definition}

The stability condition with $\alpha=1/3$ says that the size of $B$ is roughly double the size of $A$, there is a minimum degree condition between $A$ and $B$, in each part all but at most a few vertices see
most of the other part, and the set $B$ is almost independent.
Note that for $0 < \alpha \le 1/3$ and $m=\alpha n$ an integer, the complete bipartite graph $K_{m,n-m}$ is $(\alpha,\beta)$-stable with $\beta=0$.
We prove the following stability result in Section~\ref{sec:non-extremal}.

\begin{theorem}[Stability Theorem]
	\label{thm:non-extremal}
	For $0 < \beta < 1/12$ there exist $\gamma>0$ and $C>0$ such that for any $\alpha$ with $4 \beta \le \alpha \le 1/3$ the following holds.
	Let $G$ be an $n$-vertex graph with minimum degree $\delta(G) \ge \left(\alpha - \gamma \right) n$ that is not $(\alpha,\beta)$-stable.
	With $p\ge C /n$ a.a.s.~the perturbed graph $G \cup G(n,p)$ contains at least $ \min \{ \alpha n, \lfloor n/3 \rfloor \} $ pairwise vertex-disjoint triangles.
\end{theorem}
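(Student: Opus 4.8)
The plan is to prove Theorem~\ref{thm:non-extremal} by the absorption method, fed by a structural analysis of graphs that fail to be $(\alpha,\beta)$-stable. Write $m=\min\{\alpha n,\lfloor n/3\rfloor\}$ and fix constants $0<\gamma\ll\eta\ll\beta$. Since $C$ may be taken as large as we like, I would first split the randomness, writing $G(n,p)$ as the union of independent copies $G(n,p_1)\cup G(n,p_2)$ with $p_1,p_2=\Theta(1/n)$, $p_1\ge C_1/n$ and $p_2\ge C_2/n$ for large constants $C_1,C_2$. The copy $G(n,p_2)$, together with a uniformly random reservoir $R\subseteq V(G)$ of size $\eta n$, is set aside for an absorbing step, and the bulk of the triangles is built in $G\cup G(n,p_1)$ on $V(G)\setminus R$.

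The first main ingredient is a structural lemma: if $\delta(G)\ge(\alpha-\gamma)n$ and $G$ is not $(\alpha,\beta)$-stable, then $G$ contains a \emph{robustly completable template}, meaning a family $\cL$ of $m-\eta n$ pairwise vertex-disjoint \emph{links}, each link a triple $(a;\{x,y\})$ with $ax,ay\in E(G)$, together with the property that even after deleting any set of at most $\eta n$ vertices, for all but $\eta n$ of the surviving links the pair $\{x,y\}$ may be re-chosen inside a common set $N$ with $|N|=\Omega(n)$ (so completing the link to the triangle $axy$ amounts to finding an edge $xy$, either already in $G$ or supplied by the random graph). This lemma is proved by a case analysis on the negation of Definition~\ref{def:stability}: one fixes a balanced partition $V(G)=A\cup B$ with $|A|=(\alpha\pm\beta)n$ optimising the relevant quantities, and then exploits that \emph{every} such partition violates one of its three conditions — $G[A,B]$ has a low-degree vertex, or more than $\beta n$ vertices of $A$ miss $\beta n$ vertices of $B$, or more than $\beta n$ vertices of $B$ miss $\beta n$ vertices of $A$ — together with $\delta(G)\ge(\alpha-\gamma)n$, to locate either many disjoint links of the first kind (e.g. when the large side carries a linear matching whose endpoints see almost all of the small side, so the small side supplies the apices) or many vertices with $\Omega(n)$ common neighbours (as in a $C_5$-blow-up, where a link sits inside a class with apex in an adjacent class). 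The size $m-\eta n$ comes from the fact that the min-degree hypothesis still forces enough ``apex capacity''.

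Next I would invoke a random-completion lemma: for $p_1\ge C_1/n$, a.a.s.\ $G(n,p_1)$ simultaneously closes all but $\eta n$ of the links of any robustly completable template into pairwise vertex-disjoint triangles. This is a matching statement in the auxiliary graph between links and candidate completing edges, and it reduces to the standard fact that $G(N,c/N)$ on a linear vertex set a.a.s.\ has a matching of size $(1/2-\eps(c))N$ with $\eps(c)\to 0$ as $c\to\infty$, combined with the spread-out property so that the completions can be chosen greedily, or via a Hall/defect argument, with no two links competing for the same vertices. After this step we have $m-2\eta n$ pairwise vertex-disjoint triangles in $G\cup G(n,p_1)$ and an uncovered set $W$ of size $n-3m+6\eta n$; this is $\Omega(n)$ when $\alpha<1/3$, and $\Theta(\eta n)$ when $\alpha=1/3$, in which latter case $W$ must be absorbed in full. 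The final ingredient is an absorbing lemma: using $R$, the reserved edges $G(n,p_2)$, and the surplus structure of $G$ guaranteed by non-stability, one pre-builds a robust absorbing structure — a family of vertex-disjoint constant-size gadgets, each realisable as $t$ or as $t+1$ disjoint triangles, distributed so that every sufficiently small leftover set is absorbable; running the main cover on the complement of the gadgets and then switching $\Theta(\eta n)$ of them to their larger realisation yields exactly $m$ pairwise vertex-disjoint triangles.

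The step I expect to be the main obstacle is the structural lemma: extracting from the negation of Definition~\ref{def:stability} a template that is simultaneously of size $m-o(n)$ and spread out. The danger is a graph in which all edges beyond a complete bipartite skeleton pile onto a sublinear vertex set, or in which the near-optimal bipartition is ambiguous; one must argue that Definition~\ref{def:stability} is exactly tight enough to preclude both, i.e.\ that a non-stable graph always diffuses its ``extra'' structure over a linear family of links. The regime $\alpha\to 1/3$ is the most delicate part, since there $m$ is essentially spanning and $\delta(G)\approx n/3$ barely exceeds the threshold for a single triangle, so the $\eps(C)$-loss in the random matching must be compensated by the structural surplus coming from non-stability rather than by an abundance of free vertices — precisely the mechanism that separates Theorem~\ref{thm:non-extremal} from the extremal case, where $K_{\alpha n,(1-\alpha)n}$ genuinely requires $p\ge C\log n/n$.
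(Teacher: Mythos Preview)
Your plan is a genuine departure from the paper's proof, which does not use absorption. The paper applies the regularity lemma to $G$, and the single structural step is Lemma~\ref{lem:stable_cluster}: non-$(\alpha,\beta)$-stability of $G$ forces a matching of size at least $(\alpha+2d)t$ in the $(\eps,d)$-reduced graph $R$. From this matching, together with a second greedy matching into the independent leftover $V(R)\setminus V(M_1)$, one covers enough (or, when $\alpha$ is close to $1/3$, all) clusters of $R$ by vertex-disjoint cherries and matching edges. Each cherry is then made slightly unbalanced and super-regular, and Lemmas~\ref{lem:tripartite2} and~\ref{lem:bipartite} supply triangle factors on those clusters at $p\ge C/n$; the deliberate imbalance is precisely what absorbs the $\eps(C)$-defect of the random matching between the leaf clusters, so no separate absorbing gadgets are needed. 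When $\alpha\ge 1/3-d/3$ and a spanning cover is required, the exceptional set $V_0$ is dealt with \emph{first}, by a local greedy argument that finds triangles three at a time while preserving divisibility within every cherry and matching edge.

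There is a real gap in your proposal, and it is the one you flag yourself. Your ``structural lemma'' is carrying the same weight as Lemma~\ref{lem:stable_cluster} together with the super-regularity of the resulting cherries, but you give no mechanism for proving it. Non-$(\alpha,\beta)$-stability is a statement about \emph{every} partition of $V(G)$, and your case analysis on ``the negation of Definition~\ref{def:stability} for one optimal partition'' does not obviously yield anything, because the three clauses interact across partitions. Turning a global hypothesis of this kind into $m-\eta n$ vertex-disjoint links, each with a linear-size re-choice set, and moreover with those sets jointly spread enough to host $\sim m$ vertex-disjoint random edges, is exactly the global-to-local conversion that the regularity lemma is built for; the paper sidesteps the spreading issue entirely by confining the random matching to the disjoint leaf-clusters of the separate cherries. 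If you try to push your template approach through rigorously, you will almost certainly find yourself reinventing the regularity framework, at which point Lemmas~\ref{lem:tripartite2} and~\ref{lem:bipartite} finish the job without any absorber.
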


The result is best possible as $G$ can be bipartite and have no triangles, in which case we need at least a linear number of edges from the random graph to find a linear number of pairwise vertex-disjoint triangles in $G \cup G(n,p)$. On the other hand, the logarithmic factor is needed for the extremal graph. When the graph $G$ is $(\alpha, \beta)$-stable for a small enough $\beta>0$ then we prove the following in Section~\ref{sec:extremal}.

\begin{theorem}[Extremal Theorem]
	\label{thm:extremal}
	For $0 < \alpha_0 \le 1/3$ there exist $\beta,\gamma>0$ and $C>0$  such that for any $\alpha$ with $\alpha_0 \le \alpha \le 1/3$ the following holds.
	Let $G$ be an $n$-vertex graph with minimum degree $\delta(G) \ge \left( \alpha -\gamma \right) n$ that is $(\alpha,\beta)$-stable.
	With $p\ge C \log n /n$ a.a.s.~the perturbed graph $G \cup G(n,p)$ contains at least $\min\{ \delta(G), \lfloor \alpha n \rfloor  \}$ pairwise vertex-disjoint triangles.
\end{theorem}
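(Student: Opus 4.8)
The plan is to build $m:=\min\{\delta(G),\lfloor\alpha n\rfloor\}$ vertex-disjoint triangles, almost all of which consist of one vertex of $A$ and two vertices of $B$ joined by two $G$-edges of the bipartite part $G[A,B]$ and one $G(n,p)$-edge inside $B$. Fix the partition $V(G)=A\cup B$ witnessing $(\alpha,\beta)$-stability, write $a=|A|=(\alpha\pm\beta)n$, $b=|B|=n-a$, let $A^{\ast}$ and $B^{\ast}$ be the at most $\beta n$ \emph{bad} vertices of each side, and set $A'=A\setminus A^{\ast}$, $B'=B\setminus B^{\ast}$. With $\gamma\ll\beta\ll\alpha_0$ we have: each $v\in A'$ is $G$-adjacent to all but $\le 2\beta n$ vertices of $B'$; each $v\in B'$ is $G$-adjacent to all but $\le 2\beta n$ vertices of $A'$; each $v\in A^{\ast}$ has $\ge\alpha n/4$ $G$-neighbours in $B$ (from $\delta(G[A,B])\ge\alpha n/4$); $m\ge(\alpha-\gamma)n\ge10\beta n$; and $3m\le 3\lfloor\alpha n\rfloor\le n$.

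First I would collect the random facts, all of which hold a.a.s.\ for $p\ge C\log n/n$ with $C$ large (by Chernoff plus a union bound over $\le 2^{n}$ sets, where affordable): (a) any two disjoint vertex sets of size $\ge\tfrac12\beta n$ are joined by an edge of $G(n,p)$, so any $S$ with $|S|\ge\beta n$ spans a matching of size $\ge\tfrac12(|S|-\beta n)$; (b) every $S$ with $|S|\ge\beta n$ spans an edge of $G(n,p)$; (c) for any fixed $S$ with $|S|\ge\beta n$ the random graph $G(n,p)[S]\sim G(|S|,p)$ has a matching missing at most one vertex and contains at least $c\log n$ vertex-disjoint triangles; and (d) in $G(n,p)$ every vertex lies in only $O(\log^{2}n)$ triangles, so every vertex set spans only $O(\log^{3}n)$ vertex-disjoint triangles.

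The heart of the construction is the following \emph{main step}: given a matching $\cM$ of size $t\ge4\beta n$ in $G(n,p)[B']$ and a set $A''\subseteq A'$ with $|A''|=t$, form the bipartite graph on $A''\cup\cM$ joining $v\in A''$ to $e\in\cM$ when both ends of $e$ lie in $N_G(v)$; since each $v\in A''$ misses $\le2\beta n$ edges of $\cM$ and each $e\in\cM$ is missed by $\le2\beta n$ vertices of $A''$, Hall's condition holds (if $|X|\le t-2\beta n$ one vertex of $X$ already has $\ge|X|$ neighbours, and if $|X|>t-2\beta n$ then $|N(e)|+|X|>t$ for every $e$, so $N(X)=\cM$), and a perfect matching of this auxiliary bipartite graph turns $\cM$ into $t$ vertex-disjoint triangles of the desired shape. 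If $m\le|A'|$ and $m\le\lfloor|B'|/2\rfloor$ we run the main step once with $t=m$, using a near-perfect matching of $G(n,p)[B']$ from (c), and are done. Otherwise we run the main step on as many vertices of $A'$ as we can ($\min\{|A'|,\lfloor|B'|/2\rfloor\}$ of them) and then patch the remaining $O(\beta n)$ triangles: if $A'$ runs out we extend each bad vertex of $A$ to a triangle greedily, using (b) since it keeps $\ge\alpha n/4-O(\beta n)$ unused $B$-neighbours; if still short we need $k:=m-a\le\beta n$ triangles lying inside $B$, and here we use that $m\le\delta(G)$ forces $\deg_{G,B}(v)\ge\delta(G)-a\ge k$ for every $v\in B$, so $\delta(G[B])\ge k$, producing the $k$ disjoint triangles from $G(n,p)[B]$ alone via (c) when $k\le c\log n$ and otherwise from $\delta(G[B])\ge k$ (a matching of $G[B]$ whose edges are completed by single random edges when $\delta(G[B])$ is linear, and a more careful packing using $\delta(G[B])\ge k$ together with the random edges in $B$ otherwise); and if instead $B'$ runs out, which can only happen when $a>n/3$ (hence $\alpha$ within $\beta$ of $1/3$ and $t':=2m-b\le\beta n$), we pull a matching of size $t'$ from $G(n,p)[A']$ by (c) and turn each edge $\{v,v'\}$ into a triangle with a common $G$-neighbour in $B'$ (their common neighbourhood has $\ge b-4\beta n$ vertices), the inequalities $t'\le a-m$ and $3m\le n$ keeping everything disjoint. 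Bad vertices of $B$ are simply avoided.

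I expect the genuine obstacle to be the sub-case $m>a$ that requires $k=m-a$ triangles inside $B$: by (d) the random graph alone only supplies $O(\log^{3}n)$ vertex-disjoint triangles, so these must be extracted from the structure $\delta(G[B])\ge k$, and since $G[B]$ itself may contain no triangle at all one has to combine a $G[B]$-structure with single random edges and control vertex-disjointness; making this work uniformly in $k$ (from $k=O(\log n)$ up to $k=\Theta(\beta n)$), and dovetailing it with the counting near $\alpha=\tfrac13$ where $m$, $a$ and $b/2$ all agree up to $O(\beta n)$, is the delicate part. The other ingredients — the Chernoff/union-bound verification of (a)--(d), the Hall argument of the main step, and the $O(\beta n)$ greedy fixes for bad vertices — are routine once the constants are fixed with $\gamma\ll\beta\ll\alpha_0$ and $C$ large enough.
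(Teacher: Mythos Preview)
Your approach is correct in spirit and takes a genuinely more elementary route for the \emph{main step} than the paper does. The paper, after balancing $|A|$ and $|B|$ and absorbing bad vertices, splits the remaining $B$-side in two and applies its Lemma~4.1 (triangle factor in a super-regular cherry), whose proof builds the random matching by a random greedy process so that the auxiliary bipartite graph $(M,V)$ is super-regular, and only then invokes Hall via the blow-up lemma. You observe that in the extremal situation the bipartite graph $G[A',B']$ is so close to complete that \emph{any} near-perfect matching of $G(n,p)[B']$ already satisfies Hall's condition directly; this shortcut is valid here (note the minor slip that an edge $e\in\cM$ may be missed by up to $4\beta n$ vertices of $A''$, not $2\beta n$, but your Hall argument survives once $t>8\beta n$). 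The price is that your argument is specific to the extremal case, whereas the paper's Lemma~4.1 is reusable in the non-extremal proof.

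Both arguments bottom out at the same genuine obstacle you flag: when $m>a$ one needs $k=m-a$ vertex-disjoint triangles inside $B$ using only $\delta(G[B])\ge k$ and random edges, uniformly over $1\le k\le\beta n$. The paper does not argue this inline either; it invokes the Sublinear Theorem (Theorem~2.4) as a black box, whose proof occupies a separate section and splits into the three regimes you sketch. So your identification of the hard sub-case is exactly right.

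One point to be careful about is the \emph{order of operations}. You describe running the main step first and patching afterwards, but then the leftover $B$-vertices on which you seek the $k$ triangles (or the leftover $B'$-vertices for the $t'$ two-in-$A$ triangles) form a set that depends on the realised $G(n,p)$, and your random fact~(c) only applies to sets fixed in advance. The paper avoids this by doing the balancing step \emph{first}: it finds the $B$-triangles (via the Sublinear Theorem) or the two-in-$A$ triangles on the fixed sets $A,B$, removes them, and only then runs the analogue of your main step on what remains. Reordering your argument the same way, or equivalently exposing $G(n,p)$ in two rounds, closes this gap.
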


Indeed our argument will give slightly more.
If $G$ is $(\alpha,\beta)$-stable and $|A| \ge \alpha n$, then we can a.a.s.~find $\lceil \alpha n \rceil$ if $\alpha < 1/3$ and $\lfloor n/3 \rfloor$ if $\alpha=1/3$ pairwise vertex-disjoint triangles in $G \cup G(n,p)$ (even when the minimum degree in $G$ is smaller than $\alpha n$).
Also, although as discussed above the $\log n$-factor can not be avoided in general, when $|A|-\alpha n$ is linear in $n$ our proof does not need such a $\log n$-factor.

The proofs of Theorem~\ref{thm:non-extremal} and~\ref{thm:extremal} use regularity.
When the minimum degree gets smaller we can avoid the transition to sparse regularity and prove the following in Section~\ref{sec:fewtriangles} with a more elementary argument.

\begin{theorem}[Sublinear Theorem]
	\label{thm:sublinear}
	There exists $C>0$ such that the following holds for any $1 \le m \le n/256$ and any $n$-vertex graph $G$ of minimum degree $\delta(G) \ge m$.
	With $p \ge C \log n/n$ a.a.s.~the perturbed graph $G \cup G(n,p)$ contains at least $m$ pairwise vertex-disjoint triangles.   
\end{theorem}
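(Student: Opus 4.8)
The plan is to find the $m$ vertex-disjoint triangles greedily, building each triangle from one edge of $G$ (incident to a vertex of small degree) together with a vertex supplied by the random graph $G(n,p)$, while keeping the used vertex set small so that the process never gets stuck. First I would observe that since $m \le n/256$, at every stage we have used fewer than $3m \le 3n/256$ vertices, so a set $U$ of unused vertices of size at least $n - 3n/256 \ge n/2$ is always available. The key random-graph input is a standard fact: a.a.s.\ in $G(n,p)$ with $p \ge C\log n/n$, for every pair of vertices $u,v$ and every vertex set $W$ with $|W| \ge n/2$, there is a vertex $w \in W$ adjacent in $G(n,p)$ to both $u$ and $v$ --- indeed the expected number of common neighbours of $u,v$ in $W$ is at least $(n/2)p^2 = \Omega(\log^2 n/n) \to 0$, which is \emph{not} enough, so instead I would ask only that for every edge $uv$ of $G$, there is a $w \in W$ with $uw, vw \in E(G(n,p))$: this still fails by the same computation. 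The correct formulation, matching the extremal example with $O(\log^3 n)$ stray triangles, is that a.a.s.\ $G(n,p)$ contains a linear-sized set of vertices each forming a triangle with a fixed edge --- but more robustly, I would instead use that a.a.s.\ every vertex $v$ of $G(n,p)$ has codegree $\ge \tfrac12 np^2 \cdot (|W|/n)$ with most vertices $u$ inside any large $W$; since $np^2 \to 0$ this is vacuous, confirming that one genuinely must use \emph{two} random edges and one $G$-edge, not one $G$-edge and two random edges naively.

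So the real mechanism is: pick an edge $e = xy \in E(G)$; then we need a vertex $w$ with $xw \in E(G(n,p))$ and $yw \in E(G(n,p))$ --- this is the "two random edges" triangle. The relevant a.a.s.\ property of $G(n,p)$ is therefore: for every edge $xy \in E(G)$ and every set $W$ of size $\ge n/2$, there is $w \in W$ with $xw, yw \in E(G(n,p))$. As computed above, the expected codegree is $\Theta(\log^2 n/n)$, so this is false vertex-by-vertex; but we do not need it for \emph{every} edge $xy$, only for $m$ carefully chosen disjoint edges, and we may also reveal randomness adaptively. The clean approach is a two-round exposure: first expose $G(n,p_1)$ with $p_1 = p/2$ and use it, together with $G$, to select $m$ disjoint "cherries" (paths $x$--$z$--$y$ where $xz \in E(G)$ and $zy \in E(G(n,p_1))$, or simply $m$ disjoint $G$-edges whose endpoints have large $G(n,p_1)$-neighbourhoods), then expose a fresh independent $G(n,p_2)$, $p_2 = p/2$, and use the standard fact that a.a.s.\ in $G(n,p_2)$ every pair of \emph{disjoint linear-sized} sets has an edge between them, closing each cherry into a triangle. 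Concretely: for each of the $m$ vertices $v_1,\dots,v_m$ witnessing $\delta(G) \ge m$ pick a $G$-neighbour; in $G(n,p_1)$, a.a.s.\ each $v_i$ has $\ge \tfrac12 np_1 \ge \tfrac{C}{4}\log n$ neighbours, and we want to pick for each $v_i$ a private neighbour $w_i$ together with a common $G(n,p_2)$-neighbour --- but since the $w_i$ must form triangles with $G$-edges, I restructure once more.

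The cleanest correct structure: take $m$ vertex-disjoint edges $x_iy_i \in E(G)$ for $i=1,\dots,m$ (possible greedily since $\delta(G)\ge m$ and we only need $m$ disjoint edges, using at most $2m \le n/128$ vertices --- find them one at a time, each time the minimum degree condition gives an available edge among the $\ge n - 2m$ untouched vertices). Then a.a.s.\ $G(n,p)$ has the property that for every choice of such a system there exist distinct $w_1,\dots,w_m$ outside $\bigcup\{x_i,y_i\}$ with $x_iw_i, y_iw_i \in E(G(n,p))$: here the expected number of common neighbours of $x_i,y_i$ among $\ge n/2$ free vertices is $\Theta(\log^2 n / n)$, so by a first-moment/Janson argument a $(1-o(1))$-fraction of the edge-pairs will have at least one such $w$, and a Hall-type / greedy matching argument over the bipartite "edge--witness" graph finishes --- but to make \emph{every} $x_iy_i$ work we must allow re-choosing the $G$-edge at $v_i$. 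Since $v_i$ has $\ge m$ $G$-neighbours, and a.a.s.\ the number of $G(n,p)$-codegree-zero pairs incident to $v_i$ is at most, say, $\log^2 n \ll m$ (by Markov on the $\Theta(\log^2 n / n) \cdot n = \Theta(\log^2 n)$ total bad pairs at $v_i$ --- wait, this needs $m \gg \log^2 n$), we get a good edge at each $v_i$ when $m$ is large; for $m \le \log^3 n$ one falls back on the $O(\log^3 n)$ triangles living purely in $G(n,p)$ (Ruci\'nski / Johansson--Kahn--Vu regime is overkill; a direct second-moment argument gives $\gg \log^3 n$ disjoint triangles in $G(n,p)$ at $p = C\log n/n$). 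I expect the \textbf{main obstacle} to be precisely this case split and the uniformity over all choices of the $G$-edge system: one must show a.a.s.\ \emph{simultaneously for every} $n$-vertex $G$ with $\delta(G)\ge m$ that the required witnesses exist, which I would handle by fixing the $G$-edges adaptively after partial exposure of $G(n,p)$, or by a union bound over the (polynomially many in the relevant parameters) "bad configurations" using that a codegree of a fixed pair is dominated by a Binomial whose tail is easily controlled. The rest --- the greedy disjointness bookkeeping and the elementary random-graph estimates --- is routine given $m \le n/256$.
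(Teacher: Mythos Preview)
Your proposal has a genuine and fatal gap: the entire approach is built on forming each triangle from one $G$-edge $xy$ together with \emph{two} random edges $xw,yw\in E(G(n,p))$. You compute correctly early on that the expected number of common $G(n,p)$-neighbours of a fixed pair is $(n/2)p^2=\Theta(\log^2 n/n)\to 0$; it follows (by linearity, then Markov) that a.a.s.\ only $O(\log^2 n)$ pairs of vertices in the whole graph have any common $G(n,p)$-neighbour at all. Hence no matter how cleverly you choose the $G$-edges, at most $O(\log^2 n)$ of them can be closed into triangles this way, which is far below $m$ once $m\gg\log^2 n$. In your final paragraph you invert this conclusion: you claim that ``the number of $G(n,p)$-codegree-zero pairs incident to $v_i$ is at most $\log^2 n$'', but your own Markov computation shows the opposite --- it is the \emph{positive}-codegree pairs that number $O(\log^2 n)$, while almost all pairs have codegree zero. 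The two-round exposure and the adaptive re-choosing of $G$-edges do not help, because the obstruction is a first-moment one.

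The paper's proof uses the reverse mechanism: each triangle is a $G$-cherry (two $G$-edges from a vertex $v$ into $N_G(v)$) closed by \emph{one} random edge inside $N_G(v)$. For this one needs $\binom{|N_G(v)|}{2}p$ not to be negligible, which fails when $m$ is small; the paper therefore splits into three regimes. For $m\le(\log n)^3$ one takes $m$ disjoint triangles directly from $G(n,p)$, which you anticipated. For $\sqrt{n}\le m\le n/32$ a greedy argument works, after first peeling off high-degree vertices to enforce $\Delta(G)<n/32$. The substantive case is $(\log n)^3\le m\le\sqrt{n}$: here a single vertex's neighbourhood is too small to reliably contain a random edge, so the paper proves a structural lemma (Lemma~\ref{lem:manystars}) producing a large family of vertex-disjoint stars in $G$ with $\sum_K g_K^2\ge s\varepsilon^2 nm$, and then an aggregation/Chernoff argument shows that $\ge m$ of these stars receive a random edge among their leaves. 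This star-packing lemma and the aggregation step are the heart of the sublinear proof and are entirely absent from your plan.
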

Theorem~\ref{thm:main} easily follows from Theorems~\ref{thm:non-extremal},~\ref{thm:extremal}, and~\ref{thm:sublinear}.

\begin{proof}[Proof of Theorem~\ref{thm:main}]
	Let $\beta_{\ref{thm:extremal}}, \gamma_{\ref{thm:extremal}}>0$ and $C_{\ref{thm:extremal}}$ be given by Theorem~\ref{thm:extremal} on input $\alpha_0=1/256$.
	Then let $\gamma_{\ref{thm:non-extremal}}>0$ and $C_{\ref{thm:non-extremal}}$ be given by Theorem~\ref{thm:non-extremal} on input $\beta = \min\{ \alpha_0/4, \beta_{\ref{thm:extremal}} \}$.
	Moreover, let $C_{\ref{thm:sublinear}}$ be given by Theorem~\ref{thm:sublinear}.
	Define $C = \max \{ C_{\ref{thm:extremal}},C_{\ref{thm:sublinear}} \}$ and $\gamma=\min \{ \gamma_{\ref{thm:extremal}},\gamma_{\ref{thm:non-extremal}} \}$.
	
	Let $G$ be any $n$-vertex graph and $p \ge C \log n/n$, and define $m= \min \{ \delta(G), \lfloor n/3 \rfloor \}$.
	If $m \le n/256$, then we get from Theorem~\ref{thm:sublinear} that a.a.s.~$G \cup G(n,p)$ contains at least $m$ pairwise vertex-disjoint triangles, as $C \ge C_{\ref{thm:sublinear}}$.
	Otherwise, $m > n/256$ and we can choose $\alpha \in (\alpha_0,1/3]$ such that $(\alpha-\gamma)n \le m \le \alpha n$.
	If $G$ is $(\alpha,\beta)$-stable, then $G$ is also $(\alpha,\beta_{\ref{thm:extremal}})$-stable and, by Theorem~\ref{thm:extremal}, there are a.a.s.~at least $\min \{ \delta(G),\lfloor \alpha n\rfloor  \} \ge m$ pairwise vertex-disjoint triangles in $G \cup G(n,p)$, as $\alpha_0 <\alpha \le 1/3$ and $C \ge C_{\ref{thm:extremal}}$.
	Otherwise, $G$ is not $(\alpha,\beta)$-stable and, by Theorem~\ref{thm:non-extremal}, a.a.s.~$G \cup G(n,p)$ contains at least $ \min \{ \alpha n, \lfloor n/3 \rfloor \} \ge m$ pairwise vertex-disjoint triangles, as $p = \omega(1/n)$.
\end{proof}

\section{Tools}
\label{sec:tools}

We will repeatedly use the following concentration inequality due to Chernoff (see e.g.~\cite[Corollaries 2.3 and 2.4]{JLR}).
\begin{lemma}[Chernoff's inequality]
	\label{lem:chernoff}
	Let $X$ be the sum of independent binomial random variables, then for any $\delta \in (0,1)$ we have
	\[ \PP \left[ |X-\EE[X]| \ge \delta \, \EE[X] \right] \le 2 \exp \left( -\frac{\delta^2}{3} \EE[X] \right). \]
	Moreover, for any $k \ge 7\, \EE[X]$ we have $\PP[X > k] \le \exp (-k)$. 
\end{lemma}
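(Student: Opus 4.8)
The plan is to apply the classical exponential-moment (Bernstein) method. First I would reduce to a sum of independent indicator variables: if $X=\sum_i Y_i$ with the $Y_i\sim\mathrm{Bin}(n_i,q_i)$ independent, then writing each $Y_i$ as a sum of $n_i$ independent $\mathrm{Bernoulli}(q_i)$ variables shows it suffices to treat $X=\sum_{j=1}^N Z_j$ with the $Z_j\in\{0,1\}$ independent; set $\mu:=\EE[X]=\sum_j\PP[Z_j=1]$. For $t>0$, Markov's inequality applied to $e^{tX}\ge 0$ gives $\PP[X\ge a]\le e^{-ta}\,\EE[e^{tX}]$, and by independence together with $1+x\le e^x$ one gets $\EE[e^{tX}]=\prod_j\bigl(1+(e^t-1)\PP[Z_j=1]\bigr)\le\exp\bigl((e^t-1)\mu\bigr)$. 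Hence $\PP[X\ge a]\le\exp\bigl((e^t-1)\mu-ta\bigr)$ for every $t>0$, and symmetrically, applying Markov to $e^{-tX}$, $\PP[X\le a]\le\exp\bigl((e^{-t}-1)\mu+ta\bigr)$ for every $t>0$.

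Next I would optimise these two estimates over $t$. For the lower tail, put $a=(1-\delta)\mu$ and take $t=-\ln(1-\delta)>0$, which gives $\PP[X\le(1-\delta)\mu]\le\exp\bigl(\mu(-\delta-(1-\delta)\ln(1-\delta))\bigr)$; the elementary inequality $(1-\delta)\ln(1-\delta)\ge -\delta+\delta^2/2$ on $[0,1)$ then yields $\PP[X\le(1-\delta)\mu]\le e^{-\delta^2\mu/2}$. For the upper tail, put $a=(1+\delta)\mu$ and take $t=\ln(1+\delta)$, giving the standard bound $\PP[X\ge(1+\delta)\mu]\le\bigl(e^{\delta}(1+\delta)^{-(1+\delta)}\bigr)^{\mu}$; combined with $(1+\delta)\ln(1+\delta)\ge\delta+\delta^2/3$ for $\delta\in(0,1)$ this is at most $e^{-\delta^2\mu/3}$. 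Adding the two one-sided estimates gives the claimed two-sided inequality $\PP[|X-\EE X|\ge\delta\,\EE X]\le e^{-\delta^2\EE X/2}$, with the slack between the $\delta^2/3$ and $\delta^2/2$ exponents (and the factor $2$) absorbed exactly as in the cited reference \cite{JLR}.

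Finally, for the ``moreover'' part I would reuse the optimised upper-tail estimate. Given $k\ge 7\mu$, let $\delta$ be determined by $(1+\delta)\mu=k$, so that $x:=\mu/k\le 1/7$. Rewriting $\bigl(e^{\delta}(1+\delta)^{-(1+\delta)}\bigr)^{\mu}$ in terms of $k$ and $\mu$ gives $\PP[X\ge k]\le e^{-\mu}(e\mu/k)^{k}=\bigl(x\,e^{1-x}\bigr)^{k}$. Since $t\mapsto t\,e^{1-t}$ is increasing on $(0,1]$ and $x\le 1/7$, we obtain $x\,e^{1-x}\le\tfrac17 e^{6/7}<e^{-1}$ (equivalently $\tfrac67<\ln 7-1$), hence $\PP[X>k]\le\PP[X\ge k]\le e^{-k}$, as required.

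I do not expect a genuine obstacle here: the argument is entirely standard. If pressed, the only points needing a little care are the two logarithm/convexity inequalities invoked in the optimisation step and the (fairly tight) numerical check $\tfrac17 e^{6/7}<e^{-1}$ in the last part; everything else is bookkeeping.
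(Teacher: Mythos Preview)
The paper does not prove this lemma; it is stated as a standard tool with a citation to \cite[Theorem~2.8]{JLR} and used as a black box throughout. Your proposal supplies the textbook exponential-moment (Cram\'er--Chernoff) derivation, and the argument is correct, including the numerical check $\tfrac{1}{7}e^{6/7}<e^{-1}$ (equivalently $13/7<\ln 7$) for the ``moreover'' clause.

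The only soft spot, which you yourself flag, is the final combination for the two-sided bound: summing your one-sided estimates $e^{-\delta^2\mu/2}$ and $e^{-\delta^2\mu/3}$ gives at best $2e^{-\delta^2\mu/3}$, not literally $e^{-\delta^2\mu/2}$. This is not a flaw in your reasoning so much as a reflection of the fact that the lemma as stated in the paper is a mildly simplified (and not literally sharp) variant of the textbook inequality; the authors never use the precise constants, and in every application either tail alone or the cruder $2e^{-\delta^2\mu/3}$ would serve just as well.
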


The following lemma allows us to find specific triangles in a dense graph with additional random edges.
The proof is a standard application of Janson's inequality (see e.g.~\cite[Theorem 2.18]{JLR}) and it is included in Appendix~\ref{sec:appendix}.
\begin{lemma}
	\label{lem:triangle}
	For any $d>0$ there exists $C>0$ such that the following holds.
	Let $U,V,W$ be three sets of vertices of size $n$, $G$ be a bipartite graph on $(U,W)$ with $e(G) \ge d n^2$, $p > C/n$ and $G(U \cup W,V,p)$ be the random bipartite graph.
	Then with probability at least $1-2^{-4n/d}$ there is a triangle in $G \cup G(U \cup W,V,p)$ with one vertex in each of $U,V,W$.
\end{lemma}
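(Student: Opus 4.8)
\textbf{Proof plan for Lemma~\ref{lem:triangle}.}
The plan is to use the second moment method in the form of Janson's inequality to show that the number of suitable triangles is positive with the required exponential probability. Let me set up the count: for each edge $uw \in E(G)$ with $u \in U$, $w \in W$, and each vertex $v \in V$, we say the pair $(uw, v)$ is \emph{good} if both $uv$ and $wv$ are present in the random bipartite graph $G(U\cup W, V, p)$. For each such pair let $I_{uw,v}$ be the indicator of this event, and let $X = \sum_{uw \in E(G), v \in V} I_{uw,v}$ be the number of triangles of the desired type. Since $e(G) \ge dn^2$ and $|V| = n$, the number of pairs is at least $dn^3$, and each indicator has probability $p^2$, so $\mu := \EE[X] \ge d n^3 p^2 \ge d C^2 n$, which tends to infinity linearly in $n$ once $C$ is large. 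So the first moment is comfortably large.

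The heart of the matter is the correlation term $\Delta = \sum_{(uw,v) \sim (u'w',v')} \PP[I_{uw,v} = I_{u'w',v'} = 1]$, where the sum is over \emph{ordered} pairs of distinct good-pair-candidates that share at least one random edge. Two candidates $(uw,v)$ and $(u'w',v')$ share a random edge only if $v = v'$ and the edge sets $\{uv, wv\}$ and $\{u'v, w'v\}$ overlap, i.e. $\{u,w\} \cap \{u',w'\} \neq \emptyset$ (as subsets of $U$ and $W$ respectively, so really $u = u'$ or $w = w'$). Fixing $v$, if $u = u'$ and $w \neq w'$ the joint probability is $p^3$; by symmetry the same holds if $w = w'$ and $u \neq u'$. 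The number of such configurations is at most $n \cdot (\text{number of paths } w\text{-}u\text{-}w' \text{ or } u\text{-}w\text{-}u' \text{ in } G) \le n \cdot 2 \cdot n \cdot n^2 = O(n^4)$ crudely (choose $v$, the shared vertex, and the two other vertices). Hence $\Delta = O(n^4 p^3) = O(n p^3 \cdot n^3) $; comparing with $\mu^2 \ge d^2 n^6 p^4$ we get $\Delta / \mu^2 = O(1/(n^2 p)) = o(1)$, so in particular $\Delta \le \mu^2$ and actually $\Delta \ll \mu$ for $C$ large, since $\Delta = O(n^4 p^3)$ and $\mu \ge d n^3 p^2$ give $\Delta/\mu = O(np) $ — wait, that is not small. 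Let me recompute: $\Delta/\mu = O(n^4 p^3)/(d n^3 p^2) = O(np/d)$, which is large. So the right regime of Janson's inequality to invoke is the extended one: $\PP[X = 0] \le \exp\!\left(-\mu^2/(2(\mu + \Delta))\right)$ when $\Delta \ge \mu$, and here $\mu^2/\Delta = \Omega(n^6 p^4 / (n^4 p^3)) = \Omega(n^2 p) = \Omega(n \log n)$ when $p \ge C\log n/n$ — more than enough. But the lemma only assumes $p > C/n$, in which case $\mu^2/\Delta = \Omega(n^2 p) = \Omega(n)$, giving $\PP[X = 0] \le \exp(-\Omega(n))$, and with $C$ chosen large the constant in the exponent beats $4$, yielding the claimed $2^{-4n}$.

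So concretely I would: (i) define $X$ and compute $\mu = \EE[X] \ge dn^3p^2$; (ii) identify exactly which pairs of candidate triangles are dependent (they must share the vertex $v \in V$ and one of $u,w$), and bound $\Delta \le C' n^4 p^3$ for an absolute constant $C'$; (iii) apply Janson's inequality in the form $\PP[X=0] \le \exp(-\min\{\mu/2, \mu^2/(2\Delta)\})$, noting $\mu/2 \ge \tfrac12 d C^2 n$ and $\mu^2/(2\Delta) \ge d^2 n^6 p^4 / (2 C' n^4 p^3) \ge (d^2/(2C')) C n$; (iv) choose $C = C(d)$ large enough that both quantities exceed $4n \ln 2$, concluding $\PP[X = 0] \le 2^{-4n}$. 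The main obstacle — really a bookkeeping point rather than a genuine difficulty — is getting the dependency structure right: one must be careful that the ``degenerate'' contributions where candidate triangles share two edges (i.e. share both $v$ and one other vertex with the third vertex also coinciding, which forces the candidates equal) are excluded, and that the count of dependent pairs does not secretly hide an extra factor of $n$ that would spoil the $\exp(-\Omega(n))$ bound. A clean way to handle this is to sum over the shared vertex $v$ (at most $n$ choices) and then over the pair of distinct triangles through $v$, each determined by at most four further vertices but with the shared non-$v$ vertex pinned, giving the $O(n^4)$ bound cleanly; the rest is the routine comparison above.
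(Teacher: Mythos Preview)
Your proposal is correct and is essentially the same argument as the paper's: both apply Janson's inequality to the family of ``cherries'' $\{(uv,vw): uw\in E(G),\, v\in V\}$, compute $\mu\ge dn^3p^2$, observe that dependent pairs must share the $V$-vertex and one of the $U,W$-endpoints (giving $\Delta=O(n^4p^3)$), and then note that $\mu^2/(\mu+\Delta)=\Omega(d^2 n^2 p)\ge \Omega(d^2Cn)$, which exceeds $4n\ln 2$ for $C$ large enough. The paper's write-up differs only cosmetically, bounding $\PP[X\le \EE[X]/2]$ via Janson with $\gamma=1/2$ rather than $\PP[X=0]$ directly, and tracking the ratio $\Delta/\mu\le 2np$ instead of bounding $\Delta$ absolutely.
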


We will use Szemer\'edi's Regularity Lemma~\cite{szem76} and some of its consequences.
Before stating it, we introduce the relevant terminology.
The \emph{density} of a pair $(A,B)$ of disjoint sets of vertices is defined by 
\begin{align*}
	d(A,B)= \frac{e(A,B)}{|A|\cdot |B|}
\end{align*}
and the pair $(A,B)$ is called \emph{$\varepsilon$-regular}, if for all sets $X \subseteq A$ and $Y \subseteq B$ with $|X| \ge \varepsilon |A|$ and $|Y| \ge \varepsilon |B|$ we have $|d(A,B)-d(X,Y)| \le \varepsilon$.
Without further mentioning it, we will repeatedly use that when $\eps<1/2$ any $A' \subseteq A$ with $|A'| \ge |A|/2$ and $B' \subseteq B$ with $|B'| \ge |B|/2$ give a $2\eps$-regular pair $(A',B')$ with density at least $d(A,B)-\eps$.

We will also use the following well-known result that follows from the definition.

\begin{lemma}[Minimum Degree Lemma]
	\label{lem:MDL}
	Let $(A,B)$ be an $\eps$-regular pair with $d(A,B)=d$.
	Then for every $Y \subseteq B$ with $|Y| \ge \eps |B|$, the number of vertices from $A$ with degree into $Y$ less than $(d-\eps)|Y|$ is at most $\eps |A|$.
\end{lemma}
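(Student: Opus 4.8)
The plan is a short proof by contradiction, directly from the definition of $\eps$-regularity. Let $X \subseteq A$ be the set of all vertices $x \in A$ with $|N(x) \cap Y| < (d-\eps)|Y|$, and suppose towards a contradiction that $|X| > \eps |A|$. In particular $X \neq \emptyset$, so the threshold $(d-\eps)|Y|$ is positive; moreover $|X| \ge \eps|A|$ by assumption and $|Y| \ge \eps|B|$ by hypothesis, so the pair $(X,Y)$ is eligible for the regularity condition.

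First I would apply $\eps$-regularity of $(A,B)$ to the subsets $X$ and $Y$ to obtain $|d(X,Y) - d(A,B)| \le \eps$, hence $d(X,Y) \ge d(A,B) - \eps = d - \eps$, and therefore $e(X,Y) \ge (d-\eps)\,|X|\,|Y|$. On the other hand, summing the defining degree bound over all $x \in X$ gives
\[
e(X,Y) = \sum_{x \in X} |N(x) \cap Y| < (d-\eps)\,|X|\,|Y| .
\]
These two inequalities contradict each other, so in fact $|X| \le \eps|A|$, which is exactly the assertion of the lemma.

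There is essentially no obstacle here; the only point worth a word of care is the degenerate regime $d \le \eps$, in which the threshold $(d-\eps)|Y|$ is non-positive, no vertex of $A$ can have degree into $Y$ strictly below it, so $X = \emptyset$ and the claimed bound holds vacuously (consistent with the contradiction argument, since $|X| > \eps|A| \ge 0$ is already impossible in that case).
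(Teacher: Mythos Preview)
Your proof is correct and is exactly the standard argument the paper has in mind: the paper does not spell out a proof but simply states that the lemma ``follows from the definition,'' and your contradiction argument via the regularity condition applied to $(X,Y)$ is precisely that.
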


With $d \in [0,1)$ a pair $(A,B)$ is called \emph{$(\varepsilon,d)$-super-regular}, if for all sets $X \subseteq A$ and $Y \subseteq B$ with $|X| \ge \varepsilon |A|$ and $|Y| \ge \varepsilon |B|$ we have $d(X,Y) \ge d$ and $\deg(a) \ge d|B|$ for all $a \in A$ and $\deg(b) \ge d|A|$ for all $b \in B$.
It is easy to prove with Hall's Theorem that a super-regular pair with parts of the same size contains a perfect matching.
\begin{lemma}
	\label{lem:blow-up}
	For any $d>0$ there exists $\eps>0$ such that any $(\eps,d)$-super-regular pair $(U,V)$ with $|U|=|V|$ contains a perfect matching.
\end{lemma}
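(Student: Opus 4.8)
The plan is to verify Hall's condition for the bipartite graph on $(U,V)$ and then invoke Hall's theorem. Write $n := |U| = |V|$ and fix any $\eps < d$ (for concreteness one could take $\eps = d/2$); I claim this choice works. It suffices to show that every $S \subseteq U$ satisfies $|N(S)| \ge |S|$, and by symmetry of the roles of $U$ and $V$ no further check is needed. I would split into three regimes according to the size of $S$.

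First, if $|S| \le dn$, then since $(U,V)$ is $(\eps,d)$-super-regular every vertex $u \in U$ has $\deg(u) \ge d|V| = dn$, so even a single vertex of $S$ already gives $|N(S)| \ge dn \ge |S|$. Second, if $dn < |S| \le (1-\eps)n$, set $T := V \setminus N(S)$ and suppose for contradiction that $|T| \ge \eps n$; since also $|S| \ge dn > \eps n$, super-regularity forces $d(S,T) \ge d > 0$, contradicting that there are no edges between $S$ and $T$ by the definition of $T$. Hence $|T| < \eps n$, so $|N(S)| > (1-\eps)n \ge |S|$. Third, if $|S| > (1-\eps)n$, then $|U \setminus S| < \eps n \le dn$, so any $v \in V$ with no neighbour in $S$ would have all of its $\ge dn$ neighbours inside $U \setminus S$, which is impossible; thus $N(S) = V$ and $|N(S)| = n \ge |S|$.

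This establishes Hall's condition for all $S \subseteq U$, and Hall's theorem then yields the desired perfect matching. There is essentially no obstacle: the only point requiring care is choosing $\eps$ small enough relative to $d$ so that the degree lower bound $dn$ dominates both the ``small $S$'' and the ``$S$ almost all of $U$'' cases, and then the regularity hypothesis handles the middle range directly.
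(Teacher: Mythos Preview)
Your proof is correct and follows exactly the approach the paper indicates: the paper does not actually spell out a proof of this lemma but simply remarks that it ``is easy to prove with Hall's Theorem,'' which is precisely what you do. Your three-regime case split is the standard way to verify Hall's condition for a super-regular pair, and your choice $\eps<d$ is all that is needed.
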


We will use the following well-known degree form of the regularity lemma that can be derived from the original version~\cite{szem76}.

\begin{lemma}[\cite{simon96}]
	\label{lem:reg}
	For every $\varepsilon>0$ and integer $t_0$ there exists an integer $T>t_0$ such that for any graph $G$ on at least $T$ vertices and $d \in [0,1]$ there is a partition of $V(G)$ into $t_0 < t+1 \le T$ sets $V_0,\dots,V_{t}$ and a subgraph $G'$ of $G$ such that
	\begin{enumerate}[label=\upshape(P\arabic*)]
		\item \label{prop:size} $|V_i| = |V_j|$ for all $1 \le i,j \le t$ and $|V_0|\le \varepsilon |V(G)|$,
		\item \label{prop:degree} $\deg_{G'}(v) \ge \deg_G(v) - (d+\varepsilon) |V(G)|$ for all $v \in V(G)$,
		\item \label{prop:indepen} the set $V_i$ is independent in $G'$ for all $1 \le i \le t$,
		\item \label{prop:regular} for all $1 \le i < j \le t$, the pair $(V_i,V_j)$ is $\varepsilon$-regular in $G'$ and has density either $0$ or at least $d$.
	\end{enumerate}
\end{lemma}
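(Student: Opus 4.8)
The statement we must prove is the degree form of the regularity lemma (Lemma~\ref{lem:reg}), which is a standard consequence of Szemer\'edi's original regularity lemma~\cite{szem76}. The plan is to apply the original regularity lemma with carefully chosen parameters to obtain a partition, then delete a few ``bad'' edges — those inside parts, those in irregular pairs, and those in low-density pairs — to form the subgraph $G'$, and finally verify that every vertex loses only few edges in this process.

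\textbf{Key steps.} First, recall the standard partition form of Szemer\'edi's regularity lemma: for every $\eps'>0$ and $t_0'$ there is $T$ such that any graph $G$ on at least $T$ vertices admits an equipartition $V_0, V_1, \dots, V_t$ with $t_0' < t \le T$, $|V_0| \le \eps' n$, $|V_1| = \dots = |V_t|$, where all but at most $\eps' \binom{t}{2}$ of the pairs $(V_i,V_j)$ with $1 \le i<j\le t$ are $\eps'$-regular. I would apply this with a suitably small $\eps' = \eps'(\eps)$ (say $\eps' \le \eps/2$, with room to spare) and $t_0' = t_0$ to get $T$; here $n = |V(G)|$. Next, define $G'$ by removing from $G$ all edges of the following three types: (i) edges with both endpoints in $V_0$, or with one endpoint in $V_0$; (ii) edges inside some part $V_i$, $1 \le i \le t$; (iii) edges in a pair $(V_i,V_j)$ that is not $\eps'$-regular; (iv) edges in an $\eps'$-regular pair $(V_i,V_j)$ whose density is less than $d$. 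Properties~\ref{prop:size},~\ref{prop:indepen}, and~\ref{prop:regular} are then immediate: (P1) is the equipartition (with $\eps' \le \eps$), (P3) holds since we deleted all edges inside parts, and for (P4), any surviving pair is $\eps'$-regular in $G$ hence (since we only removed edges, and an $\eps'$-regular pair remains $\eps$-regular when $\eps'\le\eps$, or more carefully one argues the pair is still $\eps$-regular in $G'$ because its density is essentially unchanged) it is $\eps$-regular in $G'$ with density $0$ or at least $d$.

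The remaining point is~\ref{prop:degree}: for each vertex $v \in V(G)$, I must bound the number of edges at $v$ that were deleted. Edges of type (i): at most $|V_0| \le \eps' n$. Edges of type (ii): at most $|V_i| - 1 < n/t \le n/t_0$, which is at most $\eps' n$ if $t_0 \ge 1/\eps'$ — so I should also require $t_0$ large, or absorb this into the $\eps$ slack. Edges of type (iii): $v$ lies in one part $V_i$, and the number of parts $V_j$ forming an irregular pair with $V_i$ is, on average over $i$, at most $2\eps' t$; by summing, the total number of edges in irregular pairs is at most $\eps' \binom{t}{2} \cdot (n/t)^2 \le \eps' n^2/2$, so the number of vertices $v$ incident to more than $\eps' n$ such edges is at most $n$ — this crude bound is not good enough for an \emph{every vertex} statement, so instead I argue directly: for a \emph{fixed} $v \in V_i$, the edges of type (iii) at $v$ go into parts $V_j$ with $(V_i,V_j)$ irregular, and while a single part $V_i$ could a priori be in up to $t$ irregular pairs, we cannot bound this per-$i$ without more. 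The clean fix is to note that the standard degree-form derivation instead removes \emph{all} edges inside the $V_i$, all edges touching $V_0$, all edges in pairs of density $< d$, and all edges in irregular pairs, and observes that $\sum_i (\text{number of } j: (V_i,V_j) \text{ irregular}) \le 2\eps' \binom{t}{2}$, whence for all but a small fraction this count is small — but to get it for every $v$ one uses that the \emph{number of edges} removed at $v$ from irregular pairs is at most (number of irregular pairs through $V_i$) $\times |V_j| \le t \cdot (n/t) = n$; this is too weak. The correct standard argument: bound edges of type (iii) at $v$ by $n$ times nothing — rather, it is known (and this is exactly how the degree form is stated in~\cite{simon96}) that deleting these edge types costs each vertex at most $(d + \eps) n$ provided $\eps'$ is small relative to $\eps$ and $d$, because edges in density-$(<d)$ pairs contribute $< d \cdot n$ per vertex (at most $d$ times the size of all parts $V_j$ that $v$ sends low-density edges to, totalling $< d n$), edges touching $V_0$ or inside $V_i$ contribute $\le 2\eps' n$, and irregular pairs: here one uses that the \emph{sum} over $i$ of the number of $j$ with $(V_i,V_j)$ irregular is $\le 2\eps' \binom t2 < \eps' t^2$, so on average a part is in $< 2\eps' t$ irregular pairs; to make it hold for all parts one refines the partition or, more simply, one observes that in the original regularity lemma one may take the \emph{irregularity to be spread out} — actually the cleanest route, which I will follow, is: the number of vertices $v$ in parts $V_i$ that lie in more than $\sqrt{\eps'}\, t$ irregular pairs is at most $\sqrt{\eps'}\, n$ (by averaging: if more than $\sqrt{\eps'} t$ parts $V_i$ each had $> \sqrt{\eps'} t$ irregular partners, the total count would exceed $\eps' t^2$), and for those rare vertices we simply include them in $V_0$ after the fact, enlarging $V_0$ by $\le \sqrt{\eps'}\, n \le \eps n$. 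For the remaining vertices, type-(iii) edges number at most $\sqrt{\eps'}\,t \cdot (n/t) = \sqrt{\eps'}\, n \le \eps n/2$. Summing all contributions gives the deleted degree at $v$ at most $(d+\eps)n$, which is~\ref{prop:degree}. (One must then recheck~\ref{prop:size}: the enlarged $V_0$ still has size $\le 2\eps' n + \sqrt{\eps'} n \le \eps n$, and one re-distributes so that $|V_1| = \dots = |V_t|$, possibly moving $O(t)$ vertices, which is fine.)

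\textbf{Main obstacle.} The only genuine subtlety is making property~\ref{prop:degree} hold for \emph{every} vertex rather than for all but a few: a naive averaging over parts only controls the \emph{total} number of deleted edges, which would leave a small exceptional set of high-loss vertices. The resolution, as sketched above, is to move these exceptional vertices (those lying in too many irregular pairs) into the exceptional set $V_0$ — one checks the count of such vertices is $O(\sqrt{\eps'}\, n)$ by a second averaging — and then to choose $\eps'$ small enough in terms of $\eps$ and $d$ (e.g.\ $\eps' \le (\eps/100)^2$) and $t_0 \ge 1/\eps'$ so that all four contributions ($|V_0|$-touching, within-part, irregular-pair, and low-density-pair edges) sum to at most $(d+\eps)n$. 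Everything else is bookkeeping with the definition of $\eps$-regularity and density.
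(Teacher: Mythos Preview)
The paper does not prove Lemma~\ref{lem:reg}: it is quoted with a citation to~\cite{simon96} as the well-known degree form of Szemer\'edi's regularity lemma, and no argument is supplied. So there is nothing in the paper to compare your proposal against.

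Your derivation from the standard regularity lemma is the usual one and is correct in outline, but two points need repair. First, do not delete edges touching $V_0$: property~\ref{prop:degree} must hold for \emph{every} $v\in V(G)$, including $v\in V_0$, and stripping all such edges would force $\deg_{G'}(v)=0$ for those vertices, violating~\ref{prop:degree} whenever $\deg_G(v)>(d+\eps)n$. Since~\ref{prop:indepen} and~\ref{prop:regular} constrain only $V_1,\dots,V_t$, you may simply keep all edges meeting $V_0$ in $G'$. Second, the claim that ``edges in density-$(<d)$ pairs contribute $<d\cdot n$ per vertex'' is not justified as written: density below $d$ is an average over the pair, and a single vertex $v\in V_i$ can still have degree close to $|V_j|$ into a low-density partner $V_j$. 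The remedy is the same averaging move you already use for irregular pairs: by $\eps'$-regularity, for each fixed regular low-density partner $V_j$ at most $\eps'|V_i|$ vertices of $V_i$ have degree into $V_j$ exceeding $(d+\eps')|V_j|$, so only $O(\sqrt{\eps'})|V_i|$ vertices of $V_i$ are atypical for more than $\sqrt{\eps'}\,t$ such partners, and these can be absorbed into $V_0$ as well. With these two corrections (and the bookkeeping of re-equalising the parts and ensuring $t>t_0$ after moving whole parts to $V_0$) your argument goes through.
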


The sets $V_1, \dots, V_t$ are also called clusters and we refer to $V_0$ as the set of exceptional vertices.
We call a partition $V_0, \dots, V_t$, which satisfies~\ref{prop:size}--\ref{prop:regular}, a \emph{$(\eps,d)$-regular partition of $G$}.
Given this partition, we define the \emph{$(\eps,d)$-reduced graph} $R$ for $G$, that is the graph on the vertex set $[t]$, where $ij$ is an edge if and only if $(V_i,V_j)$ is an $\eps$-regular pair in $G'$ and has density at least $d$.

We will also use the following result on perfect matchings in random subgraphs of bipartite graphs with large minimum degree.
For any given graph $G$ we denote by $G_p$ the random graph model, where we keep each edge of $G$ with probability $p$, independently from all other choices.

\begin{lemma}
	\label{lem:matching}
	For any $\eps>0$ there exists $C>0$ such that the following holds for any bipartite graph $G$ with partition classes $|U|=|W|=n$ and minimum degree $\delta(G)\ge (1/2+\eps) n$.
	With $p\ge C \log n/n$ there is a.a.s. a perfect matching in $G_p$.
\end{lemma}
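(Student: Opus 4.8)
The plan is to verify Hall's condition for $G_p$ directly, using a union bound over potential Hall-violators together with Chernoff's inequality (Lemma~\ref{lem:chernoff}). Recall that $G$ is bipartite with classes $U,W$ of size $n$ and $\delta(G)\ge(1/2+\eps)n$. A perfect matching fails to exist in $G_p$ if and only if there is a set $S$ on one side (say $S\subseteq U$) with $|N_{G_p}(S)|<|S|$; by a standard argument one may assume $S$ is minimal, so $|N_{G_p}(S)|=|S|-1$ and in particular we only need to worry about $S$ with $1\le|S|\le n/2+1$ on each side (if $|S|>n/2+1$ on both sides the neighbourhoods would have to be large). First I would handle \emph{small} sets: if $1\le|S|\le \eps n/2$, then for any vertex $v\in S$ we have $\deg_G(v)\ge(1/2+\eps)n$, so $N_G(S)$ already has size at least $(1/2+\eps)n-|S|+1 > n/2$ (crudely, $|N_G(v)|-|S|+1$), hence more than $|S|$; here the point is that for $S$ this small it suffices that \emph{some} single vertex of $S$ retains enough of its neighbourhood. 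More carefully, fix $v\in S$: the probability that fewer than $|S|$ edges at $v$ survive is, since $|S|\le\eps n/2\ll p\cdot\tfrac12 n=\Omega(\log n)$, at most $\exp(-c\log n)$ for a constant $c$ that grows with $C$; taking $C$ large enough this beats the $\binom{n}{|S|}\le n^{|S|}$ choices of $S$ of that size, after summing over $|S|$.

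Next I would treat \emph{medium and large} sets $\eps n/2\le |S|\le n/2+1$. Here the key deterministic input is that, because $\delta(G)\ge(1/2+\eps)n$, the neighbourhood $N_G(S)$ is all of $W$ except possibly a set of size at most $(1/2-\eps)n$ (a vertex $w\in W$ with $w\notin N_G(S)$ has all its $\ge(1/2+\eps)n$ neighbours outside $S$, so $|S|\le n-(1/2+\eps)n=(1/2-\eps)n$, contradicting $|S|\ge\eps n/2$ once we also note $e(S,N_G(S))\ge|S|\cdot(1/2+\eps)n$ forces many common neighbours). Thus $|N_G(S)|\ge(1/2+\eps)n$. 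I want to show a.a.s.\ every such $S$ has $|N_{G_p}(S)|\ge|S|$, i.e.\ the set $T:=W\setminus N_{G_p}(S)$ of $W$-vertices with \emph{no} surviving edge into $S$ has size $<n-|S|+1$, so $|T|\le n/2$ suffices. For a fixed pair $(S,T)$ with $|S|\ge\eps n/2$ and $|T|=\lceil n/2\rceil$, the expected number of $G$-edges between $S$ and $T$ is at least $|T|\cdot\big(|N_G(w)|-(n-|S|)\big)$ summed over $w\in T$; using $\delta(G)\ge(1/2+\eps)n$ and $|S|\ge\eps n/2$ one gets $e_G(S,T)\ge \eps' n^2$ for some $\eps'=\eps'(\eps)>0$. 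The probability that \emph{none} of these $\ge\eps'n^2$ edges survives is $(1-p)^{\eps'n^2}\le\exp(-p\eps'n^2)\le\exp(-C\eps' n\log n)$, which for $C$ large dominates the $\binom{n}{|S|}\binom{n}{|T|}\le 4^n$ choices of $(S,T)$; a union bound over $|S|$ finishes this range. Combining the two ranges (and the symmetric statement with $S\subseteq W$) gives that a.a.s.\ Hall's condition holds on both sides, hence $G_p$ has a perfect matching.

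The main obstacle is getting the \emph{medium-range} sets right: for $S$ of size a bit above the small threshold one cannot rely on a single vertex, and one must exploit the minimum degree to guarantee that the edge count $e_G(S,T)$ between $S$ and the ``bad'' target set $T$ is genuinely quadratic in $n$ — this is where the hypothesis $\delta(G)\ge(1/2+\eps)n$ (rather than just $\ge n/2$) is essential, since at exactly $n/2$ one could have $G=K_{n/2,n/2}\cup K_{n/2,n/2}$ (a disjoint union of two complete bipartite graphs) which has no perfect matching at all. Once the quadratic lower bound on $e_G(S,T)$ is in hand, the probabilistic estimates are routine Chernoff/union-bound calculations, and the choice of $C$ is dictated by requiring $p\,\eps' n\log n$ (resp.\ $p\cdot\tfrac12 n$) to exceed the relevant entropy term $|S|\log n$ (resp.\ $n\log 4$) by a constant factor. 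I would also note in passing that Lemma~\ref{lem:blow-up} is not quite applicable here because $G$ need not be super-regular, so the direct Hall's-theorem argument is the cleanest route.
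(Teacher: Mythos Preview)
Your overall strategy --- verify Hall's condition in $G_p$ by a union bound over minimal violating sets, using that many $G$-edges from $S$ to the complement of its $G_p$-neighbourhood must all fail to appear --- is exactly the paper's approach. But both of your ranges have genuine gaps as written.

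In the \emph{small sets} case you fix a single $v\in S$ and claim the probability that fewer than $|S|$ edges at $v$ survive is small ``since $|S|\le\eps n/2\ll p\cdot\tfrac12 n=\Omega(\log n)$''. This inequality is false: $\eps n/2$ is linear in $n$, while the expected $G_p$-degree of $v$ is only $\Theta(\log n)$. So for any $|S|$ with $\log n\ll|S|\le\eps n/2$ a single vertex cannot possibly retain $|S|$ neighbours, and your argument covers nothing beyond $|S|=O(\log n)$.

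In the \emph{medium/large sets} case you bound $e_G(S,T)$ from the $T$-side via $|N_G(w)|-(n-|S|)\ge(1/2+\eps)n-n+|S|=|S|-(1/2-\eps)n$, which is negative whenever $|S|<(1/2-\eps)n$; so your claimed quadratic lower bound fails on the entire interval $[\eps n/2,(1/2-\eps)n]$. Counting from the $S$-side fixes this immediately and in fact handles all $s$ at once, making the case split unnecessary: for a minimal violator $S$ of size $s\le\lceil n/2\rceil$ and $T\supseteq W\setminus N_{G_p}(S)$ of size $n-s+1$, each $v\in S$ has at least $(1/2+\eps)n-(s-1)\ge\eps n$ $G$-neighbours in $T$, so $e_G(S,T)\ge s\eps n$ and
\[
\PP[A_s]\le\binom{n}{s}\binom{n}{s-1}(1-p)^{s\eps n}\le n^{2s-1}\,n^{-C\eps s}=n^{-1-s(C\eps-2)},
\]
which sums to $o(1)$ once $C>2/\eps$. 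This is precisely the paper's computation (the paper additionally records that every vertex of $N(S)$ has at least two $G_p$-neighbours in $S$, contributing a factor $(\binom{s}{2}p^2)^{s-1}$, but then discards it via $p^{2(s-1)}\le 1$, so the extra structure is not actually used).
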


The proof is standard and closely follows the proof for the perfect matching threshold in the random bipartite graph $G(n,n,p)$ (see~\cite[Theorem 4.1]{JLR}).
For completeness we include it in Appendix~\ref{sec:appendix}.

\section{Proof Overview and main Lemmas}
\label{sec:overview}

In this section we sketch the ideas behind our proof of Theorem~\ref{thm:non-extremal},~\ref{thm:extremal}, and~\ref{thm:sublinear}, and we give the statements of the lemmas we use.
For simplicity, when outlining the proof of Theorem~\ref{thm:non-extremal} and~\ref{thm:extremal} we assume $\alpha=1/3$, $n$ is a multiple of $3$, and $G$ is an $n$-vertex graph with minimum degree $\delta(G) \ge n/3$, in which case both theorems give a triangle factor in $G \cup G(n,p)$.

\subsection{Extremal case.} Assume that $G$ is $(1/3,\beta)$-stable and let $p\ge C \log n /n$.
The definition of stability (Definition~\ref{def:stability}) gives a partition of $V(G)$ into $A \cup B$ where the size of $B$ is roughly the double of the size of $A$, there is a minimum degree condition between $A$ and $B$, and in each part all but at most a few vertices see all but at most few a vertices of the other part.
Our proof will follow three steps.
Firstly, we find a collection of triangles $\cT_1$, such that after removing the triangles of $\cT_1$, we are left with two sets $A_1=A \setminus V(\cT_1)$ and $B_1=B \setminus V(\cT_1)$ with $|B_1| = 2|A_1|$.
The way we find these triangles depends on the sizes of $A$ and $B$ and we will use two different approaches when $|B| > 2n/3$ and  $|B| \le 2n/3$.
In particular when $|B| > 2n/3$ we need to find some triangles entirely within $B$, just using the minimum degree $n/3-|A|$ and random edges.
For that we will use Theorem~\ref{thm:sublinear}.

Our second step is to cover the vertices in $A_1$ and $B_1$ that do not have a high degree to the other part; this will give two collections of triangles $\cT_2$ and $\cT_3$.
Each such triangle has one vertex in $A_1$ and two vertices in $B_1$ so that we still have $|B_2| = 2 |A_2|$, where $A_2=A_1 \setminus V(\cT_2 \cup \cT_3)$ and $B_2=B_1 \setminus V(\cT_2 \cup \cT_3)$. 
Moreover at this point each vertex sees all but at most a few vertices of the other part.
We are now ready for the last step.
We split $B_2$ arbitrarily into two subsets $B_2'$ and $B_2''$ of equal size and we obtain that $(B_2'$, $A_2$, $B_2'')$ is a super-regular cherry, i.e. both $(B_2',A_2)$ and $(B_2'',A_2)$ are super-regular pairs.
We want to find a triangle factor covering the cherry, with the help of random edges between $B_2'$ and $B_2''$.
The next lemma, which encapsulates the main idea of our paper, takes care of this and will be proved in Section~\ref{sec:auxiliary}.

\begin{lemma}
	\label{lem:tripartite}
	For any $0<d<1$ there exist $\eps>0$ and $C>0$ such that the following holds.
	Let $U,V,W$ be sets of size $n$, let $(V,U)$ and $(V,W)$ be $(\varepsilon,d)$-super-regular pairs and let $G(U,W,p)$ be a random bipartite graph with $p\ge C \log n/n$. Then a.a.s.~there exists a triangle factor.
\end{lemma}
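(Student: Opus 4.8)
The plan is to find the triangle factor greedily in a carefully chosen order, using an absorption-type reservation rather than a full absorbing structure. First I would fix $d$ and choose $\eps \ll d$ small enough for all the super-regularity and minimum-degree lemmas (Lemma~\ref{lem:blow-up}, Lemma~\ref{lem:MDL}) to apply with room to spare, and let $C$ be large in terms of $d$. Set aside a small reservoir: pick $R_U \subseteq U$ and $R_W \subseteq W$ of size $\mu n$ each (with $\eps \ll \mu \ll d$), chosen so that $(V, U\setminus R_U)$, $(V, W\setminus R_W)$, $(V,R_U)$, $(V,R_W)$ are all still super-regular with slightly worse parameters — this is routine since a random subset of the right size inherits super-regularity. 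The reservoir vertices in $R_U\cup R_W$ will be used at the end to correct the inevitable small imbalance left over from the main greedy phase.

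The core of the argument is the following greedy embedding. Think of the triangle factor as a perfect matching $M$ between $U$ and $W$ (the vertex of $V$ in a triangle is determined later), where an edge $uw\in M$ is "usable" only if $uw\in G(U,W,p)$ \emph{and} $u,w$ have a common neighbour in the still-unused part of $V$. The key point, which encapsulates the idea of the paper, is that because $p\ge C\log n/n$ the random bipartite graph on $(U\setminus R_U, W\setminus R_W)$ restricted to any pair of linear-sized subsets is itself, after conditioning, dense enough and has minimum degree $(1/2+\Omega(1))$ times the smaller side inside the relevant common-neighbourhood structure, so Lemma~\ref{lem:matching} (or Lemma~\ref{lem:blow-up} after a super-regularity restriction) produces an almost-perfect matching $M_0$ that we can simultaneously make \emph{compatible with $V$}: process the edges of $M_0$ one at a time, and when handling $uw$ choose an unused $v\in N_V(u)\cap N_V(w)$. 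Since at every step the number of used $V$-vertices is at most $n$, and super-regularity of $(V,U\setminus R_U)$ and $(V,W\setminus R_W)$ guarantees $|N_V(u)\cap N_V(w)| \ge (d-\eps)^2 n - (\text{used}) $ as long as we stop leaving a $\mu n/2$-slack, every $v$-choice succeeds; here one uses that all but $\eps n$ pairs $(u,w)$ have $|N_V(u)\cap N_V(w)|\ge (d^2-3\eps)n$ by Lemma~\ref{lem:MDL} applied twice, and $M_0$ can be taken to avoid the bad pairs. This covers all of $V$ except a set $V'$ of size $O(\mu n)$ and all of $(U\cup W)\setminus(R_U\cup R_W)$ except $O(\mu n)$ leftover vertices on each side.

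It remains to finish: we have leftover sets $U' \subseteq R_U$, $W'\subseteq R_W$ of equal size $O(\mu n)$, and $V'\subseteq V$ of the same size, with $(V',U')$ and $(V',W')$ still super-regular (a sub-pair of a super-regular pair of linear relative size stays super-regular). Now repeat the same procedure on the triple $(U',V',W')$: the random graph $G(U',W',p)$ has $p\ge C\log n/n \gg C'\log|U'|/|U'|$, so Lemma~\ref{lem:matching} again gives a compatible perfect matching, and this time it is genuinely perfect because the sizes are exactly equal by construction. Concatenating the two phases yields the triangle factor.

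The main obstacle is the simultaneous control of the two conditions on each matched edge $uw$ — that $uw$ is a random edge \emph{and} that a fresh common $V$-neighbour survives — while keeping the leftover after the main phase exactly balanced enough for the reservoir to absorb. The cleanest way around it is to decouple: first use the randomness to get the $U$–$W$ matching via Lemma~\ref{lem:matching} on an auxiliary dense bipartite graph (the one recording pairs with many common $V$-neighbours, which has minimum degree $(1-o(1))n$ by Lemma~\ref{lem:MDL}, hence far above the $(1/2+\eps)n$ threshold), and only then greedily assign $V$-vertices, where a Hall-type / defect argument (as in Lemma~\ref{lem:blow-up}) shows the assignment never gets stuck because each $uw$ retains $\Omega(n)$ choices throughout. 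The exact-balance issue is handled purely by the deterministic choice of equal-sized reservoirs $R_U,R_W$ at the start together with the fact that the greedy phase removes one $U$-vertex for each $W$-vertex.
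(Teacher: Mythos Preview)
Your proposal has a genuine gap in the step where you assign $V$-vertices to matching edges. You obtain a perfect matching $M$ between $U$ and $W$ via Lemma~\ref{lem:matching} applied to $F_p$, where $F$ is the auxiliary graph of pairs $(u,w)$ with $|N_V(u)\cap N_V(w)|\ge d^2n/2$, and then you want to find for each $uw\in M$ a distinct $v\in N_V(u)\cap N_V(w)$. Greedy assignment cannot work: each edge has only about $d^2n$ candidate vertices in $V$, so once roughly $d^2n$ edges have been processed the remaining common neighbourhoods may be exhausted --- your claim that ``each $uw$ retains $\Omega(n)$ choices throughout'' is simply false for $d<1$. A Hall-type argument fares no better without further input: Lemma~\ref{lem:matching} only asserts that \emph{some} perfect matching exists in $F_p$, with no control over which one, and for an arbitrary perfect matching $M$ there is no reason a given $v\in V$ lies in $N_V(u)\cap N_V(w)$ for even a single edge $uw\in M$. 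Indeed if $d\le 1/2$ a perfect matching of $U$ to $W$ can send $N_U(v)$ entirely into $W\setminus N_W(v)$, so Hall's condition from the $V$-side fails outright. The same defect recurs in your final reservoir phase.

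This is exactly what the paper's Lemma~\ref{lem:trireg} is for, and it is the heart of the proof. Rather than taking an arbitrary matching, one selects $M$ by a \emph{random greedy} process inside $F_p$: at each step pick a uniformly random surviving edge of $\tilde F=F_p$. One then shows, via Chernoff bounds over all subsets $X\subseteq V$ of size $\eta n$ together with Lemma~\ref{lem:auxF}, that at every step the proportion of available edges lying in $N_U(v)\times N_W(v)$ (respectively, good for $X$) is bounded below; stochastic domination by a binomial then gives that $(M,V)$ is $(\eps',d^3/32)$-super-regular in the auxiliary graph $H_G(M,V)$, and only now does Lemma~\ref{lem:blow-up} apply. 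The leftover $\delta n$ vertices in $U$ and $W$ not covered by $M$ are handled by a second exposure round and Lemma~\ref{lem:matching} on the small remaining sets --- your reservoir idea is in that spirit, but the main phase needs the random-greedy control you are missing.
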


Thus we are able to cover the cherry with a triangle factor $\cT_4$ and we conclude observing that the collection of triangles $\cT_1 \cup \cT_2 \cup \cT_3 \cup \cT_4$ gives a triangle factor in $G \cup G(n,p)$.

\subsection{Non-extremal case.} Before giving an overview of Theorem~\ref{thm:non-extremal}, it is worth to make some comments about Lemma~\ref{lem:tripartite}. 
We point out that the probability $p$ cannot be significantly lowered.
Indeed, a triangle factor in the setting of Lemma~\ref{lem:tripartite} gives a perfect matching in the random bipartite graph on vertex set $U \cup W$ and this is a.a.s.~not possible with $p\le \frac{1}{2} \log n/n$~\cite[Theorem~4.1]{JLR}.
However, we would like to be able to find a triangle factor in a super-regular cherry with the help of the random edges also in the proof of Theorem~\ref{thm:non-extremal}, where we claimed that when the graph $G$ is not $(1/3,\beta)$-stable, already $p \ge C/n$ is sufficient.
For that we will use the following variation of Lemma~\ref{lem:tripartite}, where the improvement on the probability comes from the assumption that the super-regular cherry $U,V,W$ is a bit unbalanced as the sizes of $U$ and $W$ are smaller than the size of $V$, and thus the random bipartite graph on vertex set $U \cup W$ will be used to build a large matching covering all but a small linear fraction of vertices, which is possible already with $p \ge C/n$.
Note that here we need to use random edges within $V$.

\begin{lemma}
	\label{lem:tripartite2}
	For any $0<\delta'\le d<1$ there exist $\delta_0,\delta,\eps$ with $\delta'\ge \delta_0>\delta>\eps>0$ and $C>0$ such that the following holds.
	Let $U,V,W$ be sets of size $|V|=n$ and $(1-\delta_0) n \le |U|=|W| \le (1-\delta)n$ where $|V|+|U|+|W| \equiv 0 \pmod 3$.
	Further, let $(V,U)$ and $(V,W)$ be $(\varepsilon,d)$-super-regular pairs and let $G(V,p)$, $G(U,W,p)$ be random graphs with $p\ge C /n$.
	Then a.a.s.~there exists a triangle factor.
\end{lemma}

From Lemma~\ref{lem:tripartite2} we also derive the following result about the existence of a triangle factor in a super-regular pair edge, again with the help of the random edges.
Lemmas~\ref{lem:tripartite2}, and~\ref{lem:bipartite} will be proved in Section~\ref{sec:auxiliary}.

\begin{lemma}
	\label{lem:bipartite}
	For any $0<d<1$ there exist $\eps>0$ and $C>0$ such the following holds for sets $U,V$ of size $|V|=n$ and $3n/4 \le |U| \le n$ where $|V|+|U| \equiv 0 \pmod 3$.
	If $(U,V)$ is an $(\varepsilon,d)$-super-regular pair and $G(U,p)$ and $G(V,p)$ are random graphs with $p\ge C /n$, then a.a.s.~there exists a triangle factor.
\end{lemma}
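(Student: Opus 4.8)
The plan is to derive Lemma~\ref{lem:bipartite} from Lemma~\ref{lem:tripartite2} by artificially splitting the larger side $U$ into three parts, one of which plays the role of the ``middle'' vertex class $V$ of a super-regular cherry. Concretely, given the super-regular pair $(U,V)$ with $|V|=n$ and $3n/4\le |U|\le n$, I would first pass to a balanced sub-pair if needed; since $|U|\ge 3n/4$, write $|U| = n + r$ with $-n/4 \le r \le 0$, and I want to carve out of $U$ a set $V'$ of size $n$ (to serve as the new middle class) and split the remaining $|U|-n$ vertices of $U$ together with $V$ into the two outer classes. Set $u := |U|-n \in [-n/4,\,0]$; this is the wrong sign, so instead I should split $V$ into the middle role: let $V$ be the middle class and split $U = U_1 \cup U_2$ into two outer classes of as-equal-as-possible size, each of size between $3n/8$ and $n/2$. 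The issue is that the two outer classes then have size roughly $|U|/2 \le n/2$, which is \emph{much} smaller than $|V|=n$, so the ratio condition $(1-\delta_0)n \le |U_i| \le (1-\delta)n$ of Lemma~\ref{lem:tripartite2} fails badly. So this naive split does not work directly.

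The fix is to go the other way: make $V$ one of the outer classes and use a large chunk of $U$ as the middle. Partition $U = V' \cup W'$ where $|V'| = n$ (middle class, same size as the original $V$) and $|W'| = |U| - n \in \{0,\dots, n/4\}$. Then form the cherry with middle class $V'$ and outer classes $V$ (size $n$) and $W'$ (size $|U|-n \le n/4$) — but again $|V| = n = |V'|$ while we'd want the outer classes slightly \emph{smaller} than the middle, and here one outer class equals the middle in size and the other is tiny. This still doesn't match. The correct reduction, I believe, is: take a small sprinkling-reserved portion. Since $|U| \ge 3n/4$ and the pair is super-regular, by Lemma~\ref{lem:superreg}-type reasoning I can find $V'\subset U$ with $|V'| = (1-\delta)n$ for a suitable small $\delta$, such that $(V, V')$ is still super-regular with slightly worse parameters, and set the outer classes to be $V$ and the leftover $U\setminus V'$ of size $|U| - (1-\delta)n$. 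Hmm — but $|U\setminus V'|$ could be as small as $3n/4 - (1-\delta)n \approx n/4$, far from $(1-\delta)n$.

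Given these sign obstructions, the clean approach is the following: apply Lemma~\ref{lem:tripartite2} with its roles permuted so that $U$ plays the middle class. That is, set the Lemma~\ref{lem:tripartite2}-middle-class to be $U$ itself (so ``$n$'' there is $|U|$), and split $V$ plus a balancing piece of $U$ into the two outer classes, each required to have size $(1-\delta_0)|U| \le \cdot \le (1-\delta)|U|$. Since $|V| = n$ and $3n/4 \le |U| \le n$, we have $n \ge |U| \ge 3n/4$, so $|V|/|U| \in [1, 4/3]$; thus $V$ alone already has size at least $|U|$, too big for a single outer class but we can split $V$ into two halves $V = V_1\cup V_2$ with $|V_i| \approx n/2 \in [3|U|/8,\, 2|U|/3]$. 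That upper endpoint $2|U|/3 < (1-\delta)|U|$ works, but the lower endpoint $3|U|/8 \approx 0.375|U|$ is well below $(1-\delta_0)|U|$. So we instead set outer classes $V_1 := V$ (size $n \le \tfrac43|U|$) — too big again. \textbf{Resolution:} since none of the balanced splits fit the ratio window of Lemma~\ref{lem:tripartite2}, the intended proof must be to \emph{re-run the proof of Lemma~\ref{lem:tripartite2} itself} in this degenerate configuration where the cherry ``middle'' is $U$ and the single ``outer'' class is $V$ with $|V| \ge |U|$: first use the random edges $G(U,p)$ and the super-regularity of $(U,V)$ to greedily build a near-perfect ``tripod'' packing, then absorb the $o(n)$ leftover vertices. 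The hard part — and the step I expect to be the main obstacle — is exactly handling the last $o(n)$ uncovered vertices: one must show the leftover set on $U$ and on $V$ can be completed into triangles using the sparse random edges $G(U,p)$ and $G(V,p)$ together with the dense super-regular edges of $(U,V)$, which is where an absorbing-structure argument (or a direct appeal to Lemma~\ref{lem:triangle} cluster-by-cluster, after a regularity partition of $U$) is needed. Once that absorbing step is in place, the divisibility hypothesis $|V|+|U|\equiv 0\pmod 3$ guarantees the counts work out, and the rest is bookkeeping on the super-regular density $d$ and the constant $C$ from the invoked lemmas.
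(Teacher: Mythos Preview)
Your exploration correctly identifies that no \emph{single}-cherry split of $U\cup V$ can satisfy the size window $(1-\delta_0)|V'|\le |U'|=|W'|\le(1-\delta)|V'|$ required by Lemma~\ref{lem:tripartite2}, but the conclusion you draw --- that one must abandon the reduction and re-run an absorption argument from scratch --- is both vague and not what is needed. The missing idea is to split into \emph{two} cherries rather than one. Partition $V$ into $V_1\cup U_2\cup W_2$ (with proportions $1-2q_1,q_1,q_1$) and $U$ into $V_2\cup U_1\cup W_1$ (with proportions $1-2q_2,q_2,q_2$), so that cherry~1 has centre $V_1\subset V$ and leaves $U_1,W_1\subset U$, while cherry~2 has centre $V_2\subset U$ and leaves $U_2,W_2\subset V$. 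The two ratio constraints $\EE|U_i|=(1-\tfrac{\delta_0+\delta}{2})\EE|V_i|$ for $i=1,2$ form a $2\times 2$ linear system in $q_1,q_2$; using $3n/4\le|U|\le n$ one checks the solution lies in $(1/7,3/7)$, so a Chernoff argument yields a partition where both cherries fit the window of Lemma~\ref{lem:tripartite2} and all four pairs $(U_i,V_i),(W_i,V_i)$ inherit super-regularity.

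The reason this two-cherry split is the right move --- and why your single-cherry attempts were doomed --- is that it also matches the available randomness. For cherry~1 the leaves sit inside $U$, so the random leaf-to-leaf edges come from $G(U,p)$ and the random centre edges from $G(V,p)$; for cherry~2 the roles swap. Thus two applications of Lemma~\ref{lem:tripartite2} finish the proof directly, with no additional absorption step. Your final ``Resolution'' paragraph, by contrast, does not specify how to cover the last $o(n)$ vertices at probability $C/n$ (this is genuinely the hard part of Lemma~\ref{lem:tripartite2}, handled there via Lemma~\ref{lem:trireg}), so as written it is not a proof.
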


Now we turn to the overview of the proof for Theorem~\ref{thm:non-extremal}. 
Assume that $G$ is not $(1/3,\beta)$-stable and let $p\ge C /n$.
We apply the regularity lemma to $G$ and obtain the reduced graph $R$.
By adjusting an argument of the fourth author with Balogh and Mousset~\cite{BMS_cover}, we can prove the following stability result.

\begin{lemma}
	\label{lem:stable_cluster}
	For any $0<\beta<1/12$ there exists $d>0$ such that the following holds for any $0<\eps<d/4$, $4 \beta \le \alpha \le 1/3$, and $t \ge 10/d$ .
	Let $G$ be an $n$-vertex graph with minimum degree $\delta(G) \ge (\alpha -d/2)n$ that is not $(\alpha,\beta)$-stable and let $R$ be the $(\eps,d)$-reduced graph for some $(\eps,d)$-regular partition $V_0,\dots,V_t$ of $G$.
	Then $R$ contains a matching $M$ of size $(\alpha+2d)t$.
\end{lemma}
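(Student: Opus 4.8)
The plan is to prove the contrapositive: if the $(\eps,d)$-reduced graph $R$ contains no matching of size $(\alpha+2d)t$, then $G$ is $(\alpha,\beta)$-stable. Here $d$ is chosen small enough in terms of $\beta$ (concretely $d\ll\beta^{2}$ and $d<\beta/3$ suffice), and then $\eps<d/4$, $4\beta\le\alpha\le 1/3$, $t\ge 10/d$ are as in the statement. The first step transfers the degree condition from $G$ to $R$. By property~\ref{prop:degree} every vertex $v$ of $G$ has $\deg_{G'}(v)\ge\delta(G)-(d+\eps)n\ge(\alpha-\tfrac74 d)n$. Summing $\deg_{G'}$ over a cluster $V_i$, using that $V_i$ is independent in $G'$ (property~\ref{prop:indepen}) and that a pair which is not an edge of $R$ carries no $G'$-edge (property~\ref{prop:regular}), one gets $(\alpha-\tfrac74 d)n\le|V_0|+\deg_R(i)\cdot\tfrac nt$, and hence $\delta(R)\ge(\alpha-2d)t$.

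For the second step, suppose for contradiction that the matching number $\nu(R)$ is less than $(\alpha+2d)t$, and extract the extremal structure of $R$; here I would use the Gallai--Edmonds structure theorem (equivalently, an augmenting-path analysis of a maximum matching, in the spirit of~\cite{BMS_cover}). Write $V(R)=D\cup A_{GE}\cup C$ for the Gallai--Edmonds decomposition, so that $\nu(R)=\tfrac12\bigl(t-c(D)+|A_{GE}|\bigr)$ with $c(D)$ the number of (factor-critical) components of $R[D]$; equivalently the deficiency $c(D)-|A_{GE}|$ exceeds $(1-2\alpha-4d)t\ge(\tfrac13-4d)t$. Now a short dichotomy finishes this step. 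If $R[D]$ has no singleton component, then every component has at least $3$ vertices, so $c(D)\le|D|/3\le t/3$; the deficiency bound then forces $|A_{GE}|<4dt$, and $\delta(R)\ge(\alpha-2d)t$ forces every component of $R[D]$ to have more than $(\alpha-6d)t$ vertices, so $c(D)\le 1/(\alpha-6d)\le 1/(2\beta)$ is bounded, contradicting $c(D)>(\tfrac13-4d)t\ge(\tfrac13-4d)\cdot\tfrac{10}{d}$. Hence $R[D]$ has a singleton component $\{v\}$; all of its neighbours lie in $A_{GE}$, so $|A_{GE}|\ge\delta(R)\ge(\alpha-2d)t$, while also $|A_{GE}|\le\nu(R)<(\alpha+2d)t$. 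Thus $|A_{GE}|=\alpha t\pm 2dt$, and from $\nu(R)-|A_{GE}|<4dt$ we get $|C|<8dt$ and $|D|-c(D)<8dt$, so all but at most $20dt$ vertices of $B_R:=[t]\setminus A_{GE}$ are themselves singleton components of $R[D]$, each adjacent in $R$ only to $A_R:=A_{GE}$ and to at least $|A_R|-4dt$ of its vertices.

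The third step pulls this back to $G$ and verifies the three stability conditions. Put $A:=\bigcup_{i\in A_R}V_i$ and $B:=V(G)\setminus A$, so $|A|=\alpha n\pm 3dn$ and $|B|=(1-\alpha)n\pm 3dn$. Let $B_S\subseteq B$ be the union of the clusters corresponding to the singleton components of $R[D]$; then $|B_S|\ge|B|-21dn$, and for $u\in B_S$, since those clusters are non-adjacent in $R$ to every cluster of $B_R$, property~\ref{prop:regular} forces every $G'$-neighbour of $u$ to lie in $A\cup V_0$, whence by property~\ref{prop:degree} $\deg_G(u,A)\ge(\alpha-2d)n-|V_0|\ge|A|-6dn$. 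Consequently $e(A,B)=\sum_{v\in B}\deg_G(v,A)\ge|B_S|(\alpha-2d)n\ge|A|\,|B|-O(d)n^{2}$. Since $e(A,B)=\sum_{v\in A}\deg_G(v,B)$ as well, this double count forces that at most $O(d)n/\beta\le\beta n$ vertices of $A$ have fewer than $|B|-\beta n$ neighbours in $B$, and at most $O(d)n\le\beta n$ vertices of $B$ have fewer than $|A|-\beta n$ neighbours in $A$. Finally, to secure $\delta(G[A,B])\ge\alpha n/4$ I would reassign the remaining $O(d)n/\beta$ exceptional vertices: every $v$ has $\deg_G(v)\ge(\alpha-\tfrac d2)n>\tfrac23\alpha n$, so $v$ has at least $\alpha n/3$ neighbours on one of the two sides, and moving $v$ to the other side keeps the bipartite minimum degree above $\alpha n/4$, as only $O(d)n/\beta$ vertices move; the other two conditions and the set sizes are perturbed by only $O(d)n/\beta\le\beta n$. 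This exhibits $G$ as $(\alpha,\beta)$-stable, contradicting the hypothesis, so $\nu(R)\ge(\alpha+2d)t$ and $R$ contains the required matching.

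I expect the main obstacle to be the second step: converting ``$\nu(R)$ small'' into a genuinely bipartite-like structure with all error terms \emph{linear} in $t$ and controllably small. The Gallai--Edmonds dichotomy keeps this manageable, but one must rule out the ``all components large'' branch using both $\delta(R)\ge(\alpha-2d)t$ and $t\ge 10/d$, and it is exactly the hypothesis $4\beta\le\alpha$ that prevents the bounded quantity $1/(\alpha-6d)$ from competing with the linear lower bound on $c(D)$. A secondary subtlety is that no reduced-graph argument can deliver ``$A_R$ almost complete to $B_R$'' directly, since a reduced edge only carries density $\ge d$; this is why the third step routes the near-completeness through the estimate $e(A,B)\approx|A|\,|B|$, which uses the genuine degrees in $G$ rather than cluster densities, and why the clean-up step is needed to upgrade the average statement to the minimum-degree condition in Definition~\ref{def:stability}.
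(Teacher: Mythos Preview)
Your proof is correct and follows the same high-level architecture as the paper --- argue the contrapositive, extract a near-bipartite structure in $R$, then pull it back to $G$ and verify Definition~\ref{def:stability} --- but your second step is genuinely different. The paper (following~\cite{BMS_cover}) works directly with a maximum matching $M$: it splits $V(M)=X\cup Y$ so that each $x\in X$ has at most one neighbour in the independent set $U=V(R)\setminus V(M)$, and then shows by a short counting argument (bounding a set $Z\subseteq Y$ of low-$U$-degree vertices) that $I=U\cup X$ has size $(1-\alpha\pm 2d)t$ and at most $6\alpha d t^{2}$ edges. From $I$ it builds the candidate $B$-side in $G$ and then carves out $A'$ as the vertices of very high degree into it. Your route via the Gallai--Edmonds decomposition instead hands you the $A$-side directly as $A_{GE}$, together with the structural fact that almost all of $[t]\setminus A_{GE}$ are singleton components with neighbourhood contained in $A_{GE}$; this is cleaner conceptually and avoids the $X$/$Y$/$Z$ bookkeeping, at the cost of invoking a heavier off-the-shelf theorem. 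Either way one arrives at essentially the same near-complete-bipartite picture in $R$.

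Two small remarks on execution. In your dichotomy, the ``no singleton'' branch is fine, but make explicit that $D\neq\emptyset$ (otherwise $C=V(R)$ has a perfect matching and $\nu(R)\ge\lfloor t/2\rfloor\ge(\alpha+2d)t$), so that $c(D)\ge 1$ and the bound $c(D)\le 1/(\alpha-6d)$ is meaningful. In the clean-up of Step~3, your reassignment is morally right but a touch loose: if you move exactly the vertices with cross-degree below $\alpha n/4$, the non-moved vertices may dip just below $\alpha n/4$ after the $O(d)n/\beta$ moves. The standard fix (which your hypothesis $d\ll\beta^{2}$ already allows) is to run the bad-vertex count with threshold $\beta n/2$ rather than $\beta n$, or equivalently to move vertices with cross-degree below $\alpha n/3$; then the $O(d)n/\beta$ perturbation is absorbed by the slack. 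The paper sidesteps this by defining $A'$ and $B'$ via degree thresholds from the outset rather than via cluster membership, which automatically gives the required minimum degree before any reassignment.
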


For completeness, we give the proof in Appendix~\ref{sec:appendix}.
It follows that we can cover the vertices of $R$ with cherries $K_{1,2}$ and matching edges $K_{1,1}$, such that there are not too many cherries.\footnote{We remark that covers of the reduced graph by stars were used in~\cite{balogh2019tilings,krivelevich2017bounded} and this inspired our approach. For our purposes it is necessary that we cover the reduced graph by cherries and matching edges. Furthermore, in contrast to~\cite{balogh2019tilings} we can not rely on any triangles in the random graph but need to use the edges of $G$ to build them.}
Before we can apply Lemma~\ref{lem:tripartite2} to each cherry and Lemma~\ref{lem:bipartite} to each matching edge, some preliminary steps are needed.
We remove some vertices from each cherry to make it unbalanced and ensure that both edges are super-regular.
Then we cover all vertices that are not contained in any of the cherries or edges by finding a collection of triangles $\cT_1$.
We construct another collection of triangles $\cT_2$ to ensure that in each cherry the relations between the three sets are as required by Lemma~\ref{lem:tripartite2}.
For constructing $\cT_1$ and $\cT_2$ we will mainly rely on the minimum degree condition of $G$ and the fact that in the probability $p \ge C/n$, the constant $C$ can be chosen large enough so that a.a.s.~the following holds: each linear-sized set contains a random edge and for any not too small part of a regular pair and a linear-sized set there is a triangle containing an edge form the pair and the third vertex from the set.
Finally, we can use Lemma~\ref{lem:tripartite2} and Lemma~\ref{lem:bipartite} to cover the remaining vertices with a collection of triangles $\cT_3$.
Together $\cT_1 \cup \cT_2 \cup \cT_3$ gives a triangle factor in $G \cup G(n,p)$.

We mention already now that when $\alpha$ is sufficiently smaller than $1/3$ and the condition on the minimum degree of $G$ reads as $\delta(G) \ge (\alpha-\gamma)n$, many of the steps outlined above are not necessary.
In this case indeed we do not have to cover all graph with triangles and we only want to find $\alpha n$ pairwise vertex-disjoint triangles in $G \cup G(n,p)$.
We will see that an application of Lemma~\ref{lem:tripartite2} and Lemma~\ref{lem:bipartite} to the cherries and the matching edges found at the beginning (after having made them super-regular and suitable for Lemma~\ref{lem:tripartite2}) is already enough to find these $\alpha n$ triangles.

\subsection{Sublinear case.}
Assume $G$ is an $n$-vertex graph with minimum degree $\delta(G) \ge m$ and let $p\ge C \log n/n$.
We want to show that a.a.s.~there exist $m$ pairwise vertex-disjoint triangles in $G \cup G(n,p)$.
Any vertex of large degree in $G$ can easily be covered by a triangle later, so we assume an upper bound on the maximum degree $\Delta(G)$ of the graph $G$.
With this condition, we split the proof in three ranges for the value of $m$: $1 \le m \le (\log n)^3$, $(\log n)^3 \le m \le \sqrt{n}$, and $\sqrt{n} \le m \le n/256$.
If $1 \le m \le (\log n)^3$ a.a.s.~$m$ pairwise vertex-disjoint triangles already exist in $G(n,p)$.
If $(\log n)^3 \le m \le \sqrt{n}$ we will find many large enough vertex-disjoint stars in $G$ (see Lemma~\ref{lem:manystars}) and a.a.s.~at least $m$ of them will be completed to triangles using edges of $G(n,p)$ (see Proposition~\ref{prop:lesqrt}).
However if $m > \sqrt{n}$ we cannot hope to find $m$ large enough vertex-disjoint stars and instead we will apply a greedy strategy using that a.a.s.~every vertex has an edge in its neighbourhood (see Proposition~\ref{prop:gesqrt}).

\section{Proof of the Extremal Theorem}
\label{sec:extremal}

\begin{proof}[Proof of Theorem~\ref{thm:extremal}]
	Let $0 < \alpha_0 \le 1/3$ and choose $d=1/2$.
	Let $\varepsilon>0$ and $C_{\ref{lem:tripartite}}>0$ be given by Lemma~\ref{lem:tripartite} on input $d$.
	We can assume $\eps < 1/5$ and then choose $0<\beta< \alpha_0\, \eps /36$ and $0<\gamma<\beta/11$. 
	With $C_{\ref{thm:sublinear}}$ given by Theorem~\ref{thm:sublinear}, let $C \ge 2 C_{\ref{thm:sublinear}} + 1 + 2 C_{\ref{lem:tripartite}}/\alpha_0$.
	Finally, let $\alpha_0 \le \alpha  \le 1/3$.
	
	Given $n$, let $p\ge C \log n/n$.
	With our choice of $C$, we can reveal $G(n,p)$ in three rounds $G_1 \sim G(n,2 C_{\ref{thm:sublinear}} \log n/n)$, $G_2 \sim G(n,\log n/n)$, and $G_3 \sim G(n,\tfrac{2 C_{\ref{lem:tripartite}}}{\alpha_0} \log n/n)$.
	We will only know later in which subset we will use $G_1$ and $G_3$, but we have that a.a.s.~there is an edge of $G_2$ between any two not necessarily disjoint sets of size $\beta n$.
	Indeed, fixed two such sets, the probability that there is no edge of $G_2$ is at most $(1-\log n/n)^{(\beta n)^2} \le \exp(-\beta^2 n \log n)$, and we conclude by an union bound over the at most $2^{2n}$ choices for the two sets.
	Now let $G$ be an $n$-vertex graph with minimum degree $\delta(G) \ge \left( \alpha-\gamma \right) n$ that is $(\alpha,\beta)$-stable and define $m_0= \max\{ n/3 -\delta(G), n/3-\lfloor \alpha n \rfloor \}$.
	Our goal is to a.a.s.~find pairwise vertex-disjoint triangles in $G \cup G(n,p)$ such that at most $3 m_0$ vertices are left uncovered.
	
	To aid with calculations we let $\kappa = n/3 - \lceil \alpha n \rceil$ and observe that $\kappa \in \{ 0,-1/3,-2/3 \}$ if $\alpha=1/3$ and that $\kappa>0$ if $\alpha<1/3$ and $n$ large enough.
	Also note that $m_0-\kappa$ is an integer and that $3 \kappa = \lfloor (1-\alpha) n \rfloor - 2 \lceil \alpha n \rceil$.
	With this we set $w = \max\{ 3 \kappa, 0\}$.
	As $G$ is $(\alpha, \beta)$-stable we get a partition of $V(G)$ into  sets $A$ and $B$ satisfying the conditions of Definition~\ref{def:stability}.
	
	\begin{claim}
		\label{claim:B=2A}
		There a.a.s.~are a collection of triangles $\cT_1$ in $G \cup G_1 \cup G_2$ with $|\cT_1| \le \beta n$ and a set $W \subseteq V(G) \setminus V(\cT_1)$ with $|W| \le 3 m_0 - w$ such that the following holds.
		For $A_1=A \setminus (V(\cT_1) \cup W)$ and $B_1=B \setminus (V(\cT_1) \cup W)$, we have that $|A_1| \le \lceil \alpha n \rceil$, $|B_1|=2|A_1|+w$, the minimum degree between $A_1$ and $B_1$ is at least $\alpha n/5$, all but at most $\beta n$ vertices of $A_1$ have degree at least $|B_1| - \beta n$ into $B_1$, and all but at most $\beta n$ vertices of $B_1$ have degree at least $|A_1| - \beta n$ into $A_1$.
	\end{claim}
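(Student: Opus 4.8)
The plan is to read Claim~\ref{claim:B=2A} as a small arithmetic balancing problem dressed up with routine degree bookkeeping, with the reserved random edges $G_1,G_2$ doing all the actual triangle-finding. First I would set up the \emph{excess} $X := |B|-2|A|-w$; since $|A|+|B|=n$ this is $X = n-3|A|-w$. When $\kappa\ge 0$ (which, for $n$ large, happens whenever $\alpha$ is bounded away from $1/3$, and also for $\alpha=1/3$ when $3\mid n$) we have $w=3\kappa$ and $X = 3(\lceil\alpha n\rceil-|A|)$ is divisible by $3$; in the borderline situation $\kappa<0$ (which forces $3\nmid n$ and $\alpha$ within $O(1/n)$ of $1/3$) we have $w=0$ and $X=n-3|A|\equiv n\pmod 3$. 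In all cases $(\alpha,\beta)$-stability gives $|A|=(\alpha\pm\beta)n$, so $|X|\le 3\beta n+3$. The task then is to delete sets $R_A\subseteq A$, $R_B\subseteq B$ of sizes $r_A,r_B=O(\beta n)$, split between $V(\cT_1)$ and $W$, with $r_B-2r_A=X$ (this is exactly $|B_1|=2|A_1|+w$) and with the part inside $W$ of size at most $3m_0-w$.

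Next I would carry out the deletions according to the sign of $X$. If $X=0$, take $\cT_1=W=\emptyset$. If $X>0$ — equivalently $|B|$ large, $|B|>2\lceil\alpha n\rceil$, the case where triangles entirely inside $B$ are needed — I delete nothing from $A$ and delete $X$ vertices from $B$: namely $3t$ of them in pairwise vertex-disjoint triangles inside $B$ and $s:=X-3t$ of them into $W$, where $t:=\min\{\lfloor X/3\rfloor,\,\max\{\delta(G)-|A|,0\}\}$. The triangles come from Theorem~\ref{thm:sublinear} applied to the $|B|$-vertex graph $G[B]$: each vertex of $B$ has at least $\delta(G)-|A|\ge t$ neighbours inside $B$, and $t\le\lfloor X/3\rfloor\le\beta n\le|B|/256$; moreover $G_1$ restricted to $B$ is distributed as $G(|B|,2C_{\ref{thm:sublinear}}\log n/n)$, and since $|B|\ge n/2$ and $x\mapsto(\log x)/x$ is decreasing we have $2C_{\ref{thm:sublinear}}\log n/n\ge C_{\ref{thm:sublinear}}\log|B|/|B|$, so a.a.s.\ these $t$ triangles exist. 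If $X<0$ — $|B|$ small — I write $|X|=3q+\rho$ with $\rho\in\{0,1,2\}$ and build $q$ pairwise vertex-disjoint triangles, each with two vertices in $A$ and one in $B$, plus at most two further vertices placed into $W$ to fix the residue $\rho$ ($\rho=0$: none; $\rho=2$: one vertex of $A$; $\rho=1$: one vertex of $A$ and one of $B$; note $\rho\neq 0$ only in the borderline case). To build the $q$ triangles greedily: with fewer than $q$ built, pick any unused $b\in B$, which has $\ge\alpha n/4$ neighbours in $A$ by Definition~\ref{def:stability}, so $\ge\alpha n/4-2q\ge 2\beta n$ unused ones; split these into two sets of size $\ge\beta n$ and close a triangle on $b$ using the $G_2$-edge guaranteed between them by the a.a.s.\ assumption fixed at the start of the proof of Theorem~\ref{thm:extremal}.

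The step I expect to be the main obstacle is verifying $|W|\le 3m_0-w$, where the definitions of $m_0$, $w$, $\kappa$ and the rounding all have to line up. When $X\le 0$, $W$ has size $0$, $1$ or $2$, and since $3m_0\ge 3(n/3-\lfloor\alpha n\rfloor)$ — which is $n-3\lfloor\alpha n\rfloor$ when $\alpha$ is bounded away from $1/3$ and is $n\bmod 3$ in the borderline case, the latter being exactly the number of extra $W$-vertices forced by $\rho$ — the bound holds. The delicate case is $X>0$ with $\delta(G)<\lceil\alpha n\rceil$, so that $G[B]$ is too sparse to absorb the full excess by triangles: here $t=\max\{\delta(G)-|A|,0\}$, so $s=X-3t$ equals $n-3\delta(G)-w$ when $\delta(G)\ge|A|$ and equals $X=n-3|A|-w<n-3\delta(G)-w$ when $\delta(G)<|A|$; in both cases $s\le n-3\delta(G)-w\le 3m_0-w$ because $m_0\ge n/3-\delta(G)$. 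When $\delta(G)\ge\lceil\alpha n\rceil$ one has $t=\lfloor X/3\rfloor$ and $s=X\bmod 3$, which is $0$ when $\alpha$ is bounded away from $1/3$ and at most $n\bmod 3\le 3m_0-w$ in the borderline case.

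Finally the degree conditions: in every case we remove at most $3\beta n+3$ vertices from $B$ and at most $2\beta n+3$ from $A$, so — $\beta$ being tiny compared with $\alpha_0$, and we may take the $\eps$ from Lemma~\ref{lem:tripartite} as small as we wish — the bipartite minimum degree between $A_1$ and $B_1$ is $\ge\alpha n/4-|R_B|\ge\alpha n/5$ from the $A_1$-side and $\ge\alpha n/4-|R_A|\ge\alpha n/5$ from the $B_1$-side; a vertex of $A$ with degree $\ge|B|-\beta n$ into $B$ keeps degree $\ge|B|-\beta n-|R_B|=|B_1|-\beta n$ into $B_1$, and symmetrically for $B$, while the set of vertices violating these bounds can only shrink. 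Thus $\cT_1\subseteq G\cup G_1\cup G_2$, $|W|\le 3m_0-w$, $|B_1|=2|A_1|+w$, and all the stated degree conditions hold, proving the claim a.a.s.
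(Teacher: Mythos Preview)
Your proof is correct and follows essentially the same strategy as the paper: split according to whether $B$ is too large or $A$ is too large, use Theorem~\ref{thm:sublinear} on $G[B]$ in the former case and greedy $(A,A,B)$-triangles via the $G_2$-property in the latter, then push the residual imbalance into $W$. The only real difference is bookkeeping: you organise everything around the single excess parameter $X=|B|-2|A|-w$ and let $t=\min\{\lfloor X/3\rfloor,\max\{\delta(G)-|A|,0\}\}$ determine the number of $B$-triangles, whereas the paper parameterises by the offset $m$ of $|A|$ (or $|B|$) from $\lceil\alpha n\rceil$ and always takes $t=m-m_0+\kappa$ triangles in the relevant subcase. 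Both choices satisfy the same arithmetic constraints and yield $|W|\le 3m_0-w$; your version is arguably cleaner in that it avoids the explicit four-way case split, at the cost of a slightly more involved verification of the $|W|$ bound in the borderline $\kappa<0$ regime.
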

	
	The sets $A_1$ and $B_1$ partition $V(G) \setminus (V(\cT_1) \cup W)$ and, after proving Claim~\ref{claim:B=2A}, we will cover all but $w$ vertices from $A_1 \cup B_1$ with additional triangles.
	Hence, if we manage to find these triangles, we have covered all but $|W|+w \le 3 m_0$ vertices, as desired.
	We remark for later that $|W| \le 3m_0-w \le 4 \gamma n$.
	
	\begin{claimproof}[Proof of Claim~\ref{claim:B=2A}]
		We have either $|B| > \lfloor (1-\alpha) n \rfloor $ or $|A| \ge \lceil \alpha n \rceil$.
		First suppose that we are in the first case, where $|B|=\lfloor (1-\alpha)n \rfloor +m$ for some $1 \le m \le \beta n$ (and $|A|=\lceil \alpha n \rceil - m$), and note that
		\[|B|-2|A|= n-3 \lceil \alpha n\rceil +3m =3m+3\kappa >0 \,. \]
		If $1 \le m \le m_0 - \kappa$, then $0 < 3m \le 3 m_0 - 3 \kappa $ and we let $W$ be any set with $\min \{3m,3m+3 \kappa\} \le 3m_0-w$ vertices from $B$.
		Then with the choice of $\cT_1=\emptyset$, we have that the sets $A_1=A$ and $B_1=B \setminus W$ partition $V(G) \setminus W$, and $|A_1|=\lceil \alpha n \rceil-m$ and $|B_1|=|B|-\min\{ 3m,3m+3 \kappa \}=2|A_1|+ w$.
		If on the other hand $m_0 < m + \kappa$, then 
		\[\delta(G[B]) \ge \delta(G) - |A| \ge (n/3-m_0)-(\lceil \alpha n \rceil-m) = m-m_0+\kappa>0\]
		and we observe that $m-m_0+\kappa$ is an integer.
		Moreover $m-m_0+\kappa \le m \le |B|/256$, where we use $m_0-\kappa \ge 0$,  $m \le \beta n \le \alpha_0 \eps n/36 \le n/(3 \cdot 5 \cdot 36)$ and $|B| = \lfloor (1-\alpha)n \rfloor + m \ge n/2 +m$.
		Thus, by Theorem~\ref{thm:sublinear} and as $2 C_{\ref{thm:sublinear}} \log n/n \ge C_{\ref{thm:sublinear}} \log |B|/ |B|$ we a.a.s.~find $m-m_0+\kappa$ pairwise vertex-disjoint triangles in $(G \cup G_1)[B]$.
		Denote by $\cT_1$ the collection of these $m-m_0+\kappa$ triangles.
		Let $W$ be any set of $3m_0-w$ vertices from $B$ not covered by any triangle in $\cT_1$.
		Then the sets $A_1=A$ and $B_1=B \setminus (V(\cT_1) \cup W)$ partition $V(G) \setminus (V(\cT_1) \cup W)$, and we have $|A_1|=\lceil \alpha n \rceil-m$ and 
		\[|B_1|=|B|-3(m-m_0+\kappa)-(3 m_0 - w)=2|A_1| + w\, .\]
		
		It remains to consider the second case, where $|A|= \lceil \alpha n \rceil+m$ for some $0 \le m \le \beta n$.
		First, we greedily pick $m$ pairwise vertex-disjoint triangles in $G \cup G_2$ each with two vertices in $A$ and one vertex in $B$.
		Indeed during the process, there is always a vertex $v$ in $B$, not yet contained in a triangle, with at least $\deg(v,A)-2m \ge (\alpha/4-2\beta)n \ge \beta n$ uncovered neighbours in $A$ in the graph $G$.
		By the property assumed in $G_2$ we can then find an edge within these neighbours of $v$ to get a triangle.
		Denote by $\cT_1$ the collection of these $m$ triangles.
		
		If $\kappa \ge 0$, then, with the choice of $W=\emptyset$, we have that $A_1=A \setminus V(\cT_1)$ and $B_1=B \setminus V(\cT_1)$ partition $V(G) \setminus V(\cT_1) $ and \[|B_1|=|B|-|\cT_1|= \lfloor (1-\alpha)n \rfloor - 2m = 2(\lceil \alpha n\rceil -m)+ 3 \kappa=2|A_1|+w.\]

		If $\kappa < 0$, we additionally pick a set $W$ of vertices not covered by triangles from $\cT_1$, such that $|W|=1$, $|W \cap A|=1$, $|W \cap B|=0$ if $\kappa=-2/3$, and $|W|=2$, $|W \cap A|=|W\cap B|=1$ if $\kappa=-1/3$.
		Then, the sets $A_1=A \setminus (V(\cT_1) \cup W)$ and $B_1=B \setminus (V(\cT_1) \cup W)$ partition $V(G) \setminus (V(\cT_1) \cup W)$, and $|B_1|=2|A_1|+w$.
		Indeed, if $\kappa=-2/3$ we have $|A_1|=|A|-2|\cT_1|-1=\lceil \alpha n\rceil - m-1$ and \[|B_1|=|B|-|\cT_1|= \lfloor (1-\alpha)n \rfloor - 2m = 2 (\lceil \alpha n\rceil -m) + 3 \kappa = 2|A_1|\]
		and if $\kappa=-1/3$ we have $|A_1|=|A|-2|\cT_1|-1=\lceil \alpha n\rceil - m-1$ and \[|B_1|=|B|-|\cT_1|-1= \lfloor (1-\alpha)n \rfloor - 2m -1 = 2 (\lceil \alpha n\rceil -m)  + 3 \kappa -1= 2|A_1|\, .\]
		
		Observe, that in both the first and the second case $|B_1|=2|A_1|+w$ and $|W| \le 3m_0-w$.
		Moreover, as we remove at most $3m_0-w \le 4 \gamma n \le \alpha n/20$ vertices from each $A$ and $B$, the minimum degree between $A_1$ and $B_1$ is at least $\alpha n/4 - \alpha n/20 = \alpha n/5$.
		The other conditions on the degrees between $A_1$ and $B_1$ are clearly satisfied, because for all but at most $\beta n$ vertices from each set there are still at most $\beta n$ non-neighbours in the other set.
		The bounds $|\cT| \le \beta n$ and $|A_1| \le \lceil \alpha n \rceil$ also hold in all cases.
	\end{claimproof}

	We want to cover all but $w$ vertices in $A_1 \cup B_1$ and we start from those vertices in $A_1$ and $B_1$ that do not have a high degree to the other part.
	We will always cover them with triangles with one vertex in $A_1$ and two vertices in $B_1$ to ensure that the relation between the number of vertices remaining in $A_1$ and $B_1$ does not change.
	Let
	\[
	\tilde{A_1} = \{v \in A_1: \deg(v,B_1) \le |B_1|-9\beta n \} \text{ and }
	\tilde{B_1} = \{v \in B_1: \deg(v,A_1) \le |A_1|-9\beta n \},
	\]
	and observe that $|\tilde{A_1}|,|\tilde{B_1}| \le \beta n$.
	
	We claim that a.a.s.~we can greedily pick pairwise vertex-disjoint triangles in $(G \cup G_2)[A_1 \cup B_1]$ that cover all vertices of $\tilde{A_1}$, with each triangle having one vertex in $\tilde{A_1}$ and two vertices in $B_1 \setminus \tilde{B_1}$.
	Indeed, at each step during the process, an uncovered vertex $v$ in $\tilde{A_1}$ has at least $\deg(v,B_1)- |\tilde{B_1}| - 2 |\tilde{A_1}| \ge (\alpha/5-3 \beta)n \ge \beta n$ uncovered neighbours in $B_1 \setminus \tilde{B_1}$ in the graph $G$.
	We then find an edge of $G_2$ within these neighbours of $v$ and build a triangle.
	Denote by $\cT_2$ the collection of these triangles and note that $|\cT_2| \le \beta n$.
	
	Observe that at this point $2 |\cT_2| \le 2\beta n $ vertices of $B_1 \setminus \tilde{B_1}$ have already been covered.
	We claim that a.a.s.~we can greedly pick pairwise vertex-disjoint triangles in $(G \cup G_2)[(A_1 \cup B_1) \setminus V(\cT_2)]$ that cover all vertices of $\tilde{B_1}$, where each triangle has one vertex in $A_1 \setminus \tilde{A_1}$, one vertex in $\tilde{B_1}$ and one vertex in $B_1 \setminus \tilde{B_1}$.
	Indeed, at each step during the process, an uncovered vertex $v$ in $\tilde{B_1}$ has at least $\deg(v,A_1) - |\tilde{A_1}| - |\tilde{B_1}| \ge (\alpha/5-2 \beta)n \ge \beta n$ uncovered neighbours in $A_1 \setminus \tilde{A_1}$ in the graph $G$ and at least 
	\begin{align*}
		\delta(G)& - 3|\cT_1| - |W| - \deg(v,A_1) - 2 |\cT_2| - 2|\tilde{B_1}| \\
		&\ge (\alpha - \gamma)n - 3\beta n - 4 \gamma n - (\lceil \alpha n \rceil -9 \beta n) - 4 \beta n \ge \beta n
	\end{align*}
	uncovered neighbours in $B_1 \setminus \tilde{B_1}$ in the graph $G$.
	We then find an edge of $G_2$ between these two neighbourhood sets to get a triangle.
	Denote by $\cT_3$ the collection of these triangles and note that $|\cT_3| \le \beta n$.
	
	The sets $A_2=A_1 \setminus V(\cT_2 \cup \cT_3)$ and $B_2=B_1 \setminus V(\cT_2 \cup \cT_3)$ give a partition of the remaining vertices in $V(G) \setminus (V(\cT_1) \cup V(\cT_2) \cup V(\cT_3) \cup W)$.
	We have 
	\[|A_2|\ge |A| - 2|\cT_1| - |\cT_2| - |\cT_3| - |W|  \ge \alpha n - 5 \beta n - 4 \gamma n \ge \alpha n/2\, \]
	and $|B_2|=2|A_2| + w$. 
	Moreover, the degree from $A_2$ to $B_2$ is at least $|B_1|-9 \beta n - 2|\cT_2 \cup \cT_3| = |B_2| - 9 \beta n$ and the degree from $B_2$ to $A_2$ is at least $|A_1|- 9 \beta n - |\cT_2 \cup \cT_3| = |A_2| - 9 \beta n$. 
	We partition $B_2$ arbitrarily into three subsets $B_2'$, $B_2''$, and $W'$ of size $|B_2'|=|B_2''|=|A_2|$ and $|W'|=|B_2|-2|A_2|=w$.
	Then the degree from $A_2$ to $B_2'$ and the degree from $A_2$ to $B_2''$ are at least $|B_2'| - 9 \beta n$.
	
	We claim that the pair $(A_2,B_2')$ is $(\varepsilon,1/2)$-super-regular, with $\varepsilon$ being chosen as stated at the beginning of the proof.
	Indeed for all $X \subset A_2$ and $Y \subset B_2'$ with $|X| \ge \varepsilon |A_2|$ and $|Y| \ge \varepsilon |B_2'|$, we have
	\[e(X,Y) \ge |X| (|Y| - 9 \beta n) \ge \frac{1}{2} |X| |Y| \]
	and $\deg(a,B_2') \ge |B_2'| - 9 \beta n \ge |B_2'|/2$ for all $a \in A_2$, and $\deg(b,A_2) \ge |A_2| - 9 \beta n \ge |A_2|/2$ for all $b \in B_2'$, where for all inequalities we use $\beta \le \alpha/36$.
	For the same reason, the pair $(A_2,B_2'')$ is $(\varepsilon,1/2)$-super-regular as well.
	
	Now, as $\tfrac{2C_{\ref{lem:tripartite}}}{\alpha_0} \log n/n \ge C_{\ref{lem:tripartite}} \log |A_2|/|A_2|$, we can apply Lemma~\ref{lem:tripartite} to $A_2$, $B_2'$ and $B_2''$, with $d=1/2$ and $\varepsilon$, and a.a.s.~get a triangle factor $\cT_4$ in $(G \cup G_3) [V']$, where $V'=V(G) \setminus (V(\cT_1) \cup V(\cT_2) \cup V(\cT_3) \cup W \cup W')$.
	Then $\cT_1 \cup \cT_2 \cup \cT_3 \cup \cT_4$ contains at least
	\[(n - |W| - |W'|)/3 \ge n/3- m_0 \ge \min \{ \delta(G), \lfloor \alpha n \rfloor \}\]
	pairwise vertex-disjoint triangles covering $V(G) \setminus (W \cup W')$.
\end{proof}

We point out that under certain conditions our proof of Theorem~\ref{thm:extremal} gives more triangles.
When $\alpha < 1/3$ and $|A| \ge \alpha n$, as $|W| \le  3m_0-w$, we get $\lceil \alpha n \rceil$ pairwise vertex-disjoint triangles in $G \cup G(n,p)$, even when $\delta(G) < \alpha n$.
Similarly, when $\alpha = 1/3$ and $|A| \ge n/3$, as $|W| \le 2$, we get $\lfloor n/3 \rfloor$ pairwise vertex-disjoint triangles in $G \cup G(n,p)$, even when $\delta(G) < n/3$.
Moreover, for any value of $\alpha$, when $|A|-n/3>0$ is linear in $n$ we could use Lemma~\ref{lem:tripartite2} instead of Lemma~\ref{lem:tripartite} to avoid the $\log n$-factor in the probability.

\section{Proof of the Stability Theorem}
\label{sec:non-extremal}

\begin{proof}[Proof of Theorem~\ref{thm:non-extremal}]
	We start by defining necessary constants. Given $0<\beta<1/12$, let $d>0$ be obtained from Lemma~\ref{lem:stable_cluster}, and set $\gamma=d/2$ and $t_0=11/d$.
	Next, we take any $0<\delta' <  160^{-2} d^2$ and use Lemma~\ref{lem:tripartite2} on input $d/2$ and $\delta'$ to obtain $\delta_0, \delta, \eps'$ with $\delta' \ge \delta_0>\delta>\eps'>0$ and $C_1$.
	Additionally we assume that $C_1$ is large enough and $\eps'$ is small enough for Lemma~\ref{lem:bipartite} to hold with input $d/2$.
	Finally, let $C_2$ be given by Lemma~\ref{lem:triangle} on input $d/2$.
	We let $0<\eps \le \eps'/2$.
	In summary, the dependencies between our constants are as follows:
	\[ 
	\eps \ll \eps' < \delta < \delta_0 \le \delta' \ll d \ll \beta < \frac{1}{12}
	\quad \text{and} \quad 
	\frac{1}{t_0}, \gamma \ll d. 
	\]
	We apply Lemma~\ref{lem:reg} with $\eps$ and $t_0$ to obtain $T$. We take $C$ large enough such that, for $p \ge C/n$, the random graph $G(n,p)$ contains the union $G_1 \cup G_2 \cup G_3$, where $G_1 \sim G(n,2C_1T/n)$, $G_2 \sim G(n,4C_2T/(dn))$, and $G_3 \sim G(n,96 T^2/(d^2n))$.
	
	Now, for any $\alpha$ with $4 \beta \le \alpha \le 1/3$, let $G$ be an $n$-vertex graph on the vertex set $V$ with minimum degree $\delta(G) \ge (\alpha-\gamma)n$ that is not $(\alpha,\beta)$-stable.
	With the regularity lemma (Lemma~\ref{lem:reg}) applied to $G$, we get $G'$, $t_0 < t+1 \le T$ and a partition $V_0,\dots,V_t$ of $V(G)$ such that~\ref{prop:size}--\ref{prop:regular} hold.
	Define $n_0=|V_1|=|V_2|=\dots =|V_t|$ and observe that $(1-\eps)n/t \le n_0 \le n/t$.
	We denote by $R$ the $(\eps,d)$-reduced graph for $G$, that is, the graph on the vertex set $[t]$ with edges $ij$ corresponding to $\eps$-regular pairs $(V_i,V_j)$ of density at least~$d$ in $G'$.
	We observe that the minimum degree of $R$ satisfies $\delta(R) \ge (\alpha-2d) t$ because, otherwise, there would be vertices with degree at most $(\alpha-2d)t(n/t)+\eps n < (\alpha - \gamma)n - (d+\eps)n$ in $G'$, contradicting~\ref{prop:degree}.
	
	The purpose of $G_1$ will become clear later, but we describe some useful properties of $G_2$ and $G_3$ now.
	Let $U$ and $W$ be any two clusters that give an edge in $R$, $V$ any cluster, and $U' \subseteq U$, $W'\subseteq W$, $V'\subseteq V$ three pairwise disjoint subsets each of size $dn_0/2$. Then, with $G_2$ and as $4C_2T/(dn) \ge 2C_2/(dn_0)$, by Lemma~\ref{lem:triangle} we have that with probability at least $1-2^{-4(dn_0/2)/(d/2)}=1-2^{-4n_0}$
	\begin{align}
		\label{random_graph_G2} 
		\text{there is a triangle in $G \cup G_2$ with one vertex in each set $U'$, $W'$, $V'$.}
	\end{align}
	With a union bound over the at most $t^32^{3n_0}$ choices for $U$, $W$, $V$ and $U'$, $W'$, $V'$, we conclude that a.a.s.~\eqref{random_graph_G2} holds for all choices as above.
	
	With $G_3$ we a.a.s.~have that
	\begin{align}
		\label{random_graph_G3} 
		\text{ any set $A$ of size at least $dn_0/2$ contains an edge of $G_3$.}
	\end{align}
	In fact, given any set $A$ of size at least $dn_0/2$, the expected number of edges of $G_3$ in $A$ is
	\[
	\binom{|A|}{2} \cdot \frac{96 T^2}{d^2 n} \ge \frac{1}{3} \cdot \frac{d^2 n_0^2}{4} \cdot \frac{96 T^2}{d^2 n} =8T^2 \frac{n_0^2}{n} \ge 2n\, ,
	\]
	where we used that $n/n_0 \le t/(1-\eps) \le 2T$.
	Therefore the probability that the set $A$ does not contain an edge of $G_3$ is at most $(1-\frac{96 T^2}{d^2 n})^{\binom{|A|}{2}} \le \exp\left(-\binom{|A|}{2} \cdot \frac{96 T^2}{d^2 n}\right) \le \exp(-2n)$ and~\eqref{random_graph_G3} follows from a union bound over the at most $2^n$ choices for $A$.
	
	Now let $M_1$ be a largest matching in $R$.
	Since $G$ is not $(\alpha,\beta)$-stable, using Lemma~\ref{lem:stable_cluster}, we conclude that $|M_1| \ge (\alpha + 2d)t$.
	At this point, for the sake of clarity, we split our proof into two cases -- $0 < \alpha < 1/3 - d/3$ and $1/3 - d/3 \le \alpha \le 1/3$ -- although some steps will be the same.
	The first case is indeed much easier, as we do not need to cover all the graph with triangles, while in the second case we are looking for a spanning structure and we want to find $\lfloor n/3 \rfloor$ pairwise vertex-disjoint triangles.
	
	\textbf{Case $0 < \alpha < 1/3 - d/3$.}
	As $M_1$ is a largest matching in $R$, the set $V(R) \setminus V(M_1)$ is independent and only one endpoint of each edge of $M_1$ can be adjacent to more than one vertex from $V(R) \setminus V(M_1)$.
	Therefore, we can greedily pick a second matching $M_2$ such that each edge of $M_2$ contains a vertex of $V(R) \setminus V(M_1)$ and a vertex of $V(M_1)$, and $M_2$ covers at least $\min\{|V(R) \setminus V(M_1)|,\delta(R) \}$ vertices of $V(R) \setminus V(M_1)$.
	The two matchings $M_1$ and $M_2$ together cover a subset $V(M_1 \cup M_2) \subset V(R)$ of 
	\[2|M_1| + \min\{|V(R) \setminus V(M_1)|,\delta(R) \} \ge \min\{t,(3\alpha+2d)t\} \ge (3 \alpha + d)t\]
	vertices, and we can extract a collection of $|M_2|$ vertex-disjoint cherries and a disjoint matching that cover such vertices.	
	This gives a subgraph $R'\subseteq R$ consisting of cherries and a matching such that for all edges $ij \in E(R')$ the pair $(V_i,V_j)$ is $\eps$-regular of density at least $d$ in $G'$, and therefore in $G$ as well.
	We denote by $\cJ \subset [t]$ the indices of the clusters $V_i$ of the cherries and the matching edges in $R'$ and we observe from above that $|\cJ| \ge (3 \alpha + d)t$.
	We add to $V_0$ all the vertices of $G$ that are in the clusters $V_j$ for $j \notin \cJ$.
	
	Then we make all pairs associated with the edges of $R'$ super-regular.
	Given a pair $(A,B)$, by Lemma~\ref{lem:MDL}, all but at most $\eps n_0$ vertices of $A$ (resp. $B$) have degree at least $(d-\eps)n_0$ to $B$ (resp. $A$).
	For every such pair we remove these vertices from $A$ and $B$, and remove additional vertices to ensure all clusters have the same size.
	As $R'$ only contains vertex-disjoint cherries and a disjoint matching, we can achieve that by removing a total of at most $2 \eps n_0$ vertices from each cluster.
	We add all the removed vertices to $V_0$.
	Observe that afterwards all the pairs $(A,B)$ associated with the edges of $R'$ are $(2\eps,d-3\eps)$-super-regular, because every vertex $a \in A$ has degree at least $(d-\eps)n_0 - 2\eps n_0 \ge (d-3\eps)|B|$ into $B$, and every vertex $b \in B$ has degree at least $(d-3\eps)|A|$ into $A$.
	
	Recall that for a later application of Lemma~\ref{lem:tripartite2} we need that for each cherry the sizes of the leaf-clusters are smaller than the size of the centre-cluster.
	Thus for each cherry $ijk$ of $R'$, with $j$ being the centre, we additionally remove $\delta |V_j| \le \delta n_0$ vertices from the leaves $V_i$ and $V_k$, and add them to $V_0$.
	We have $|V_i|=|V_k|=(1-\delta)|V_j|$ that implies $|V_i|=|V_k| \ge (1-\delta_0)|V_j|$, as $\delta_0 > \delta$.
	We have that all edges of $R'$  still give $(2\eps,d-3\eps-\delta)$-super-regular pairs.
	Moreover 
	\[\left| \bigcup_{j \in \cJ} V_j \right| \ge (1-2\eps-\delta) n_0 |\cJ| \ge (1-\eps)(1-2\eps-\delta)(3\alpha+d) n \ge 3 \alpha n\, .\]
	We can assume (by moving only a few additional vertices to $V_0$ that do not harm the bounds above) that for all cherries and matching edges in $R'$ the number of vertices in the clusters together is divisible by three.
	
	For each such super-regular cherry $ijk$ of $R'$, after revealing $G_1[V_i \cup V_j \cup V_k]$ we find by Lemma~\ref{lem:tripartite2} a.a.s.~a triangle factor covering all the vertices in $V_i \cup V_j \cup V_k$.
	Similarly for any matching edge $ij$ of $R'$, after revealing $G_1[V_i \cup V_j]$ we find by Lemma~\ref{lem:bipartite} a.a.s.~ a triangle factor covering all the vertices in $V_i \cup V_j$.
	Note that we apply Lemma~\ref{lem:tripartite2} and Lemma~\ref{lem:bipartite} only constantly many times and thus a.a.s.~we get a triangle factor in all such applications. 
	Let $\cT$ be the union of all such triangle factors.
	Then $\cT$ covers $\left|\bigcup_{j \in \cJ} V_j \right| \ge 3 \alpha n$ vertices and gives at least $\alpha n=\min \{\alpha n, \lfloor n/3 \rfloor\}$ pairwise vertex-disjoint triangles in $G \cup G(n,p)$.
	
	\textbf{Case $1/3 - d/3 \le \alpha \le 1/3$.}
	As discussed in the overview, here we cannot directly apply Lemma~\ref{lem:tripartite2} and Lemma~\ref{lem:bipartite} as in the case $0 < \alpha < 1/3-d/3$, but we need additional steps.
	However even with a lower minimum degree, we will cover all vertices of $G$ and find $\lfloor n/3 \rfloor$ pairwise vertex-disjoint triangles.
	Recall that $M_1$ is a largest matching and that $|M_1| \ge (\alpha+2d)t$.
	Then the set $V(R) \setminus V(M_1)$ is independent, has size
	\[|V(R) \setminus V(M_1)| = t - 2|M_1| \le (1-2\alpha-4d)t \le (\alpha-3d)t\]
	and only one endpoint of each edge of $M_1$ can be adjacent to more than one vertex from $V(R) \setminus V(M_1)$.
	Given that $\delta(R) \ge (\alpha -2d)t$, we can greedily pick a second matching $M_2$ such that each edge of $M_2$ contains a vertex of $V(R) \setminus V(M_1)$ and a vertex of $V(M_1)$, and $M_2$ covers the remaining vertices $V(R) \setminus V(M_1)$ completely.
	Therefore, the two matchings $M_1$ and $M_2$ together cover the vertex set $V(R)$ and we can extract a collection of $\ell = |M_2| \le (\alpha-3d)t$ vertex-disjoint cherries and a disjoint matching that cover $V(R)$.
	
	This gives a spanning subgraph $R'\subseteq R$ on vertex set $[t]$ containing $\ell \le (\alpha-3d)t$ cherries and a matching of size $(t-3\ell)/2 \ge (1-3 \alpha + 9d)t/2 \ge 9dt/2$ such that for all edges $ij \in E(R')$ the pair $(V_i,V_j)$ is $\eps$-regular of density at least $d$ in $G'$, and therefore in $G$ as well.
	We denote by $\cI \subseteq [t]$ the indices of the clusters $V_i$ that are not the centre of a cherry in $R'$.
	As above, with Lemma~\ref{lem:MDL}, we can make the pairs associated with the edges of $R'$ $(2\eps,d-3\eps)$-super-regular, while keeping the clusters all of the same size.
	For this we have to remove at most $t 2 \eps n_0 \le t 2 \eps n/t = 2 \eps n$ vertices, which we add to $V_0$.
	Next, as for a later application of Lemma~\ref{lem:tripartite2} we need that for each cherry the sizes of the leaf-clusters are smaller than the size of the centre-cluster, we remove for each $i \in \cI$ additionally $\delta |V_i| \le \delta n_0 \le \delta n/t$ vertices from $V_i$ and add them to $V_0$.
	Note that we remove vertices from the clusters of matching edges as well, although this is not necessary.
	We then get $|V_0| \le \eps n + 2 \eps n + t \delta n/t \le 2 \delta n$.
	We can assume (by moving only a few additional vertices to $V_0$ that do not harm the bounds above) that for all cherries and matching edges in $R'$ the number of vertices in the clusters together is divisible by three. 
	By removing $n \pmod 3 \in \{0,1,2\}$ vertices from $V_0$ we also have $|V_0| \equiv 0 \pmod 3$; note that this only happens when $n$ is not divisible by $3$ and we can discard these vertices.	
	
	\textbf{Covering $V_0$ with triangles.}
	We now want to cover the exceptional vertices in $V_0$ by triangles.
	It would be easy to do this greedily by just using~\eqref{random_graph_G3}, but it might happen that afterwards in many of the cherries the number of vertices is not divisible by three or that the centre cluster gets too small.
	To avoid both these issues, we will cover $V_0$ while using the same number of vertices from clusters that are together in a cherry or matching edge.
	For this we will always cover three vertices at a time and combine~\eqref{random_graph_G2} with~\eqref{random_graph_G3} to find additional triangles.
	Observe that $|V_0 \cup \bigcup_{i \not\in \cI } V_i| \le 2 \delta n + \ell n/t \le (\alpha-\gamma)n - 2dn$ and, therefore, any $v \in V_0$ has at least $2 dn$ neighbours in $\bigcup_{i \in \cI} V_i$.
	
	Assume we have already covered $V' \subseteq V_0$ vertices of $V_0$ using at most $5|V'|$ triangles in total.
	Let $W'$ be the set of vertices from $\bigcup_{i \in \cI} V_i$ used for the triangles covering $V'$ and note that $|W'| \le 30 \delta n$.
	Then let $\cI' \subseteq \cI$ be the set of indices of clusters $V_i$ with $i \in \cI$ which intersect $W'$ in at least $\sqrt{\delta}n_0$ vertices and note that $|\cI'| \le |W'|/(\sqrt{\delta}n_0) \le 30 \sqrt{\delta} t/ (1-\eps) \le 40 \sqrt{\delta} t \le dt/4$.
	Moreover, notice that as for each $v \in V_0$ we have $\deg_G(v,\bigcup_{i \in \cI} V_i) \ge 2dn$, there are at least $dt$ indices $i \in \cI$ such that $v$ has at least $dn_0$ neighbours in $V_i$.
	In particular, as $|\cI'| \le dt/4$ and $t \ge 10/d$, there are at least $dt-|\cI'| \ge 3dt/4 \ge 7$ indices $i \in \cI \setminus \cI'$ such that $v$ has at least $dn_0$ neighbours in $V_i$.
	Therefore we can pick three vertices $v_1,v_2,v_3 \in V_0 \setminus V'$ and three indices $i_1,i_2,i_3$ in $\cI \setminus \cI'$ such that $v_j$ has $dn_0$ neighbours in $V_{i_j}$ for $j=1,2,3$ and the clusters $V_{i_1},V_{i_2},V_{i_3}$ belong to pairwise different cherries or matching edges.
	For $j=1,2,3$ with~\eqref{random_graph_G3} we find an edge $e_j$ in $G_3[N(v_j,V_{i_j}) \setminus W']$ and we cover the three vertices with triangles.
	It is easy to show that we can find at most $10$ additional triangles with the help of~\eqref{random_graph_G2} and~\eqref{random_graph_G3}, in such a way that, overall, for each cherry and matching edge, we use the same number of vertices from each of their clusters; in particular, the number of vertices used from each cherry and matching edge is divisible by three.
	The clusters $V_{i_1},V_{i_2},V_{i_3}$ can belong to three cherries, two cherries and one matching edge, one cherry and two matching edges, or three matching edges. 
	We give details in the case where they are all leaves of (different) cherries, and we refer to Figure~\ref{fig:triangles} for the other three cases.
	With~\eqref{random_graph_G2} we find four triangles: two with a vertex in each of the other cluster of the cherry containing $V_{i_1}$ and the third vertex in one of the other clusters of the cherry containing $V_{i_2}$, and other two triangles with one vertex in each of the other cluster of the cherry containing $V_{i_3}$ and the third vertex in the remaining cluster of the cherry containing $V_{i_2}$.
	When a $V_{i_j}$ belongs to a matching edge of $R'$, we first find with~\eqref{random_graph_G3} two triangles inside this matching edge each with one vertex in the cluster $V_{i_j}$ and the other two vertices in the other cluster of the matching edge, then we proceed as before (see Figure~\ref{fig:triangles}).
	Note that we cover three vertices of $V_0$ using at most $13$ triangles, and thus to cover $V' \subseteq V_0$ we use at most $13 |V'|/3 \le 5 |V'|$, as claimed above.
	Therefore we can repeat this procedure until $V'=V_0$.
	
	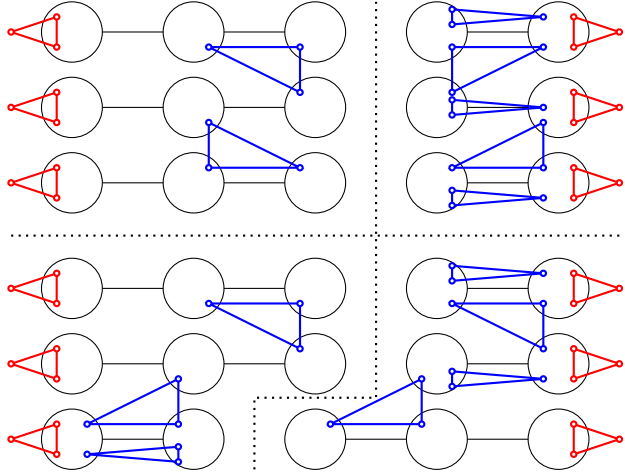
\begin{figure}[hbt]
		\begin{tikzpicture}[scale=0.2]
			\tikzstyle{every node}=[circle, draw,fill=white,
			inner sep=0pt, minimum width=2pt]
			
			\foreach \s in {4,12,20,28,36}
			\foreach \t in {4,9,14,21,26,31}
			\draw (\s,\t) circle (2cm);		
			
			\foreach \s in {6,30}
			\foreach \t in {4,9,14,21,26,31}
			\draw (\s,\t) -- (\s+4,\t);
			
			\foreach \t in {9,14,21,26,31}
			\draw (14,\t) -- (18,\t);
			
			\draw (22,4) -- (26,4);
			
			\foreach \s/\t in {13/30,13/13,29/13}
			\draw[blue,thick] (\s,\t) node {} -- (\s+6,\t) node {} -- (\s+6,\t-3) node {} -- cycle;
			
			\foreach \s/\t in {13/25}
			\draw[blue,thick] (\s,\t) node {} -- (\s,\t-3) node {} -- (\s+6,\t-3) node {} -- cycle;
			
			\foreach \s/\t in {5/5,29/22,21/5}
			\draw[blue,thick] (\s,\t) node {} -- (\s+6,\t) node {} -- (\s+6,\t+3) node {} -- cycle;
			
			\draw[blue,thick] (29,30) node {} -- (29,27) node {} -- (35,30) node {} -- cycle;
			
			\foreach \t in {8,15,20,26,32}
			\draw[blue,thick] (29,\t-0.5) node {} -- (29,\t+0.5) node {} -- (35,\t) node {} -- cycle;
			
			\draw[blue,thick] (5,3) node {} -- (11,2.5) node {} -- (11,3.5) node {} -- cycle;
			
			\foreach \t in {4,9,14,21,26,31}
			\draw[red,thick] (37,\t-1) node {} -- (37,\t+1) node {} -- (40,\t) node {} -- cycle;
			
			\foreach \t in {4,9,14,21,26,31}
			\draw[red,thick] (3,\t-1) node {} -- (3,\t+1) node {} -- (0,\t) node {} -- cycle;
			
			\draw[thick,dotted] (16,2) -- (16,6.75) -- (24,6.75) -- (24,33);		
			\draw[thick,dotted] (0,17.5) -- (40,17.5);
		\end{tikzpicture}
		\caption{Embeddings of triangles for absorbing $V_0$ while using the same number of vertices from each cluster within a cherry or a matching edge. Each red triangle covers a vertex of $V_0$. Each blue triangle stands for two triangles with end-points in the same clusters; we only draw one for simplicity.}
		\label{fig:triangles}
	\end{figure}
	
	Let $\cT_1$ be the set of triangles we found above to cover $V_0$ and keep the divisibility condition.
	We now update the regularity partition by deleting $V(\cT_1)$ from each $V_i$ for $i \in [t]$ and note that for all cherries and matchings from $R'$ the number of vertices in the clusters together is divisible by three.
	We recall that so far we removed at most $(2 \eps + \delta + 2\sqrt{\delta}) n_0$ vertices from each cluster, where the first (resp. second, third) term bounds the number of vertices removed for making each pair super-regular (resp. for a later application of Lemma~\ref{lem:tripartite2}, for covering $V_0$).

	\textbf{Balancing the partition.}
	Now the matching edges in $R'$ are already ready for an application of Lemma~\ref{lem:bipartite} and we will not modify the corresponding clusters anymore.
	However, before an application of Lemma~\ref{lem:tripartite2} to the cherries in $R'$, we need to ensure that the ratio between their size and the size of the centre-cluster satisfies the hypotheses of the lemma.
	This is what we are going to do now.
	For $i \not\in \cI$ we denote by $W_i$ and $U_i$ the leaf-clusters of the cherry centred at $V_i$.
	Before covering the vertices of $V_0$, we had $|W_i| = |U_i| \le (1-\delta) |V_i|$ for $i \not\in \cI$, which still holds as we removed the same number of vertices from each cluster of a cherry.

	However we still need to guarantee the other inequality $|W_i|,|U_i| \ge (1-\delta_0) |V_i|$.	
	For that, we find $2m$ triangles with two vertices in $V_i$, of which one half has the third vertex in $U_i$ and the other half in $W_i$, where $m$ is the smallest integer such that 
	\begin{equation}
		\label{eq:2m}
		|U_i|-m \ge (1-\delta_0)(|V_i|-4m).
	\end{equation}
	Then after removing these $2m$ triangles, we will have precisely $(1-\delta_0)|V_i| \le |U_i| = |W_i|$.
	Observe that the inequality~\eqref{eq:2m} implies that $m \ge \frac{(1-\delta_0)|V_i|-|U_i|}{4(1-\delta_0)-1}$ and, as we chose the smallest such $m$, we get $m \le \lceil \frac{(1-\delta_0)|V_i|-|U_i|}{4(1-\delta_0)-1} \rceil$.
	Moreover as $\delta < \delta_0$, $|V_i| \le n_0$ and $|U_i| \ge (1-2\eps-\delta-2 \sqrt{\delta} ) n_0$, we have $\frac{(1-\delta_0)|V_i|-|U_i|}{4(1-\delta_0)-1} < \frac{(1-\delta_0)-(1-2\eps-\delta-2 \sqrt{\delta} )}{2}n_0 < 2 \sqrt{\delta}n_0$.
	Therefore, for $n$ (and thus $n_0$) large enough, $m \le 2 \sqrt{\delta}n_0$.
	We can find these at most $4 \sqrt{\delta} n_0$ triangles, by iteratively picking them with~\eqref{random_graph_G3} and removing the corresponding vertices from $U_i$, $W_i$, and $V_i$.
	Indeed, for any $v \in W_i \cup U_i$ we have degree into $V_i$ at least $(d-3 \eps- \delta -10 \sqrt{\delta})n_0 \ge dn_0/2$, as we started from $(2\eps,d-3\eps)$-super-regular pairs and $\delta < \delta' < 160^{-2} d^2$.
	
	Note that afterwards we still have $|U_i| \le (1-\delta)|V_i|$ as for large enough $n$ and with $\delta < \delta_0$ we have $m \le \lceil \frac{(1-\delta_0)|V_i|-|U_i|}{4(1-\delta_0)-1} \rceil \le \frac{(1-\delta)|V_i|-|U_i|}{4(1-\delta)-1}$.
	Therefore, we have $(1-\delta_0)|V_i| \le |U_i| = |W_i| \le (1-\delta) |V_i|$.
	Moreover with $d-3 \eps-\delta-10 \sqrt{\delta} \ge d/2$ and $2\eps \le \eps'$, we get that the pairs $(U_i,V_i)$ and $(W_i,V_i)$ are $(\eps',d/2)$-super-regular.	
	Let $\cT_2$ be the set of triangles we removed during this phase.
	
	\textbf{Completing the triangles.}
	Now for any $i \not\in \cI$, after revealing $G_1[V_i \cup W_i \cup U_i]$, we a.a.s.~find  a triangle factor covering the vertices of $U_i$, $W_i$, and $V_i$ by Lemma~\ref{lem:tripartite2}.
	Similarly for any matching edge $ij$ of $R'$ observe that $(V_i,V_j)$ is a $(\eps',d/2)$-super-regular pair.
	Then after revealing $G_1[V_i \cup V_j]$, we a.a.s.~find a triangle factor covering the vertices of $V_i$ and $V_j$ by Lemma~\ref{lem:bipartite}.
	Note that we apply Lemma~\ref{lem:tripartite2} and Lemma~\ref{lem:bipartite} only constantly many times and thus a.a.s.~we get a triangle factor in all such applications. 
	Let $\cT_3$ be the union of the triangle factors we obtain for each $i \not\in \cI$ and each matching edge $ij$ from $R'$.
	Then $\cT_1 \cup \cT_2 \cup \cT_3$ gives $\lfloor n/3 \rfloor$ pairwise vertex-disjoint triangles in $G \cup G(n,p)$.
\end{proof}

\section{Proof of the Sublinear Theorem}
\label{sec:fewtriangles}

As outlined in the overview, we use the following two Propositions to prove Theorem~\ref{thm:sublinear}. 

\begin{proposition}
	\label{prop:lesqrt}
	For any $0<\gamma<1/2$ there exists $C>0$ such that for any $(\log n)^3 \le m \le \sqrt{n}$ and any $n$-vertex graph $G$ with maximum degree $\Delta(G) \le \gamma n$ and minimum degree $\delta(G) \ge m$ the following holds.
	With $p \ge C /n$ there are a.a.s.~at least $m$ pairwise vertex-disjoint triangles in $G \cup G(n,p)$.
\end{proposition}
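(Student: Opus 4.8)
The plan is to reduce the statement to a deterministic ``many disjoint stars'' lemma together with a short probabilistic completion argument, and then to read off the a.a.s.\ conclusion from Chernoff's inequality. We are in the regime already prepared by the proof of Theorem~\ref{thm:sublinear}: $\Delta(G)\le\gamma n$, $\delta(G)\ge m$, $(\log n)^3\le m\le\sqrt n$, and $p\ge C/n$ with $C=C(\gamma)$ a large constant. The heuristic is that a star of $G$ with $\ell$ leaves is closed into a triangle as soon as $G(n,p)$ places an edge among its leaves, which happens with probability of order $\min\{p\ell^2,1\}$; so we want pairwise vertex-disjoint stars $S_1,\dots,S_N$ in $G$, with leaf sets $L_1,\dots,L_N$ of sizes $\ell_i$, for which $\sum_i\min\{p\ell_i^2,1\}$ is comfortably larger than $m$.

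The deterministic input is Lemma~\ref{lem:manystars}: using only $\delta(G)\ge m$ and $\Delta(G)\le\gamma n<n/2$, one produces such a family with $\sum_i\min\{p\ell_i^2,1\}\ge 4m$ once $C$ is large. The natural way to aim for this is to fix a target leaf-size $\ell$ that is at most $\delta(G)$ and as close as the degrees allow to the completion threshold $1/\sqrt p$, which is of order $\sqrt{n/C}$ — i.e.\ $\ell=\min\{m,\lfloor\sqrt{n/C}\rfloor\}$, so that $p\ell^2\le 1$ — and then to pack a positive fraction $c_\gamma$ of $V(G)$ by vertex-disjoint stars with $\ell$ leaves each; this gives $N\ge c_\gamma n/\ell$ and hence $\sum_i\min\{p\ell_i^2,1\}=Np\ell^2\ge c_\gamma pn\ell=c_\gamma C\ell\ge 4m$, using $\ell=m$ when $m\le\sqrt{n/C}$ and $\ell$ of order $\sqrt{n/C}$ together with $m\le\sqrt n$ otherwise. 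A subtlety is that a fixed-size star packing does not always suffice: when $G$ has only a few potential centres of large degree (think of a complete bipartite graph between a small hub set and the rest), one is forced to use a few very large stars instead, so the lemma has to allow varying leaf-sizes; the bound $\Delta(G)\le\gamma n$ is exactly what rules out the genuinely bad case $K_{m,n-m}$, which admits only $O(m)$ disjoint stars of size $m/2$.

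Given such a family of stars, reveal $G(n,p)$ and let $X_i$ be the indicator that $G(n,p)$ has an edge inside $L_i$; then the centre of $S_i$ together with the two endpoints of such an edge spans a triangle of $G\cup G(n,p)$ (an edge of $G$ inside $L_i$ only helps). Since the $L_i$ are pairwise disjoint, the $X_i$ depend on disjoint sets of potential edges and are therefore independent, with $\PP[X_i=1]\ge 1-(1-p)^{\binom{\ell_i}{2}}\ge c_0\min\{p\ell_i^2,1\}$ for an absolute constant $c_0>0$. Hence $\mu:=\EE[\sum_iX_i]\ge c_0\sum_i\min\{p\ell_i^2,1\}\ge 4c_0m\ge 2m$ (absorbing $c_0$ into the choice of $C$). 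As $\sum_iX_i$ is a sum of independent indicators, Chernoff's inequality (Lemma~\ref{lem:chernoff}) gives $\PP[\sum_iX_i<m]\le\PP[\sum_iX_i\le\mu/2]\le\exp(-\mu/8)\le\exp(-m/4)\le\exp(-(\log n)^3/4)=o(1)$; this is precisely where the hypothesis $m\ge(\log n)^3$ is used. On the complementary event at least $m$ of the stars are closed, and picking one triangle from each of $m$ of them yields $m$ pairwise vertex-disjoint triangles in $G\cup G(n,p)$, because the stars themselves are vertex-disjoint.

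The main obstacle is Lemma~\ref{lem:manystars}. A crude greedy packing of $\ell$-stars only guarantees $O_\gamma(1)$ of them: if $R$ is the uncovered set when greedy stalls then $G[R]$ has maximum degree $<\ell$, so every $v\in R$ sends more than $m-\ell$ edges into the covered set $U$, whence $|R|(m-\ell)\le e(R,U)\le|U|\Delta(G)\le|U|\gamma n$; with $\Delta(G)=\gamma n$ this only forces $|U|\gtrsim_\gamma m$, i.e.\ a bounded number of stars, and moreover there are graphs (like the bipartite hub example above) on which greedy for a fixed leaf-size really does stall after $O_\gamma(1)$ stars. So the proof must genuinely exploit $\Delta(G)\le\gamma n$ beyond this one-line count — for instance by first peeling off large stars centred at high-degree vertices and then handling the bounded-degree remainder, by controlling how the degrees of the uncovered part decay as stars are removed, or by an exchange/augmentation argument — so as to produce (appropriately sized) disjoint stars covering a linear fraction of $V(G)$. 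Once that deterministic lemma is in hand, the probabilistic part above is routine.
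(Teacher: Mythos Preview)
Your proposal is correct and follows the same two-step strategy as the paper: invoke Lemma~\ref{lem:manystars} to produce many vertex-disjoint stars in $G$ with large $\sum_K g_K^2$, then close stars into triangles via random edges among their leaves and count with Chernoff. The paper's lemma actually outputs a family with $\eps m\le g_K\le\eps\sqrt n$ and $\sum_K g_K^2\ge s\eps^2 nm$ for any prescribed integer $s$; from this and $p\ge C/n$ (reducing to $p=C/n$ by monotonicity, so that $pg_K^2\le C\eps^2$ stays bounded) one indeed obtains your target $\sum_K\min\{pg_K^2,1\}\gtrsim m$ once $s$ is taken large enough and $C$ is chosen accordingly---so your slight restatement of the lemma's conclusion is harmless.

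The one genuine difference is in the probabilistic completion. The paper first buckets the stars into $t=O(\log(\sqrt n/m))$ dyadic size classes $\cK_i$, trims each class to a common leaf-count, restricts to the classes $\cI$ contributing at least a $1/t$-fraction of $\sum g_K^2$, and applies Chernoff \emph{inside each class separately} (where the indicators are i.i.d.), finishing with a union bound over the $O(\log n)$ classes. Your direct application of Chernoff to the full heterogeneous sum $\sum_i X_i$ is legitimate---Lemma~\ref{lem:chernoff} covers sums of independent Bernoullis with different parameters---and is shorter; the dyadic bucketing buys nothing essential here. The only care your route requires is getting the circular constraint on the constants to close (choose $s$ large in Lemma~\ref{lem:manystars}, then $C$ so that $C\eps^2$ is still below~$1$), which it does.

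Your identification of Lemma~\ref{lem:manystars} as the real obstacle, and of an exchange/augmentation argument as the right mechanism, also matches the paper: its proof takes a family maximising $\sum_K g_K^2$ over all admissible star families and shows, via a sequence of exchange arguments, that if this maximum is below $s\eps^2 nm$ then some vertex must have degree exceeding $\gamma n$.
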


\begin{proposition}
	\label{prop:gesqrt}
	There exists $C>0$ such that for any $\sqrt{n} \le m \le n/32$ and any $n$-vertex graph $G$ with maximum degree $\Delta(G) \le n/32$ and minimum degree $\delta(G) \ge m$ the following holds.
	With $p \ge C \log n/n$ there are a.a.s.~at least $m$ pairwise vertex-disjoint triangles in $G \cup G(n,p)$.
\end{proposition}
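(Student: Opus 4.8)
The plan is to build the $m$ pairwise vertex-disjoint triangles one at a time by a greedy procedure: the minimum degree of $G$ always supplies a suitable apex vertex, and the random edges close it into a triangle. The only input I need from $G(n,p)$ is that a.a.s.\ every large vertex neighbourhood in $G$ spans an edge of $G(n,p)$, and in fact spans many. So first I would record the deterministic consequences of revealing $G(n,p)$. Since $\delta(G)\ge m\ge\sqrt n$, for every vertex $v$ the number of $G(n,p)$-edges inside $N_G(v)$ is a binomial variable of mean at least $p\binom{m}{2}\ge\tfrac14 C\log n$; by Chernoff's inequality (Lemma~\ref{lem:chernoff}) and a union bound over the $n$ vertices, a.a.s.\ $G(n,p)[N_G(v)]$ has at least $\tfrac1{16}p\binom{\deg_G(v)}{2}$ edges for every $v$. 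A second application of Lemma~\ref{lem:chernoff} (its ``moreover'' part) with a union bound gives a.a.s.\ $\Delta(G(n,p))\le 7C\log n$. From now on I would argue deterministically on any $G\cup G(n,p)$ satisfying these two properties.

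For the greedy itself: suppose pairwise vertex-disjoint triangles $\tau_1,\dots,\tau_{i-1}$ with $i-1<m$ have been found, and put $D=V(\tau_1)\cup\dots\cup V(\tau_{i-1})$, so $|D|<3m\le 3n/32$. Since $e(G)\ge nm/2$ and at most $|D|\,\Delta(G)<3mn/32$ edges of $G$ meet $D$, the graph $G-D$ has at least $13nm/32$ edges, hence some vertex $v\notin D$ has $|N_G(v)\setminus D|\ge m/2$. If $G(n,p)$ contains an edge $xy$ inside $N_G(v)\setminus D$, then $\{v,x,y\}$ is a triangle of $G\cup G(n,p)$ avoiding $D$, which I take as $\tau_i$; after $m$ rounds the procedure is complete.

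The crux, and what I expect to be the main obstacle, is guaranteeing that such an edge $xy$ always exists, i.e.\ that the ``edge in every large neighbourhood'' property survives deletion of the set $D$, which is chosen adaptively and so depends on $G(n,p)$. What one needs is: a.a.s.\ for every vertex $v$ and every $S$ with $|S|<3m$ and $|N_G(v)\setminus S|\ge m/2$, the graph $G(n,p)[N_G(v)\setminus S]$ still has an edge. Here $N_G(v)\setminus S$ depends only on $S\cap N_G(v)\subseteq N_G(v)$, and $|N_G(v)|<n/32$. When deleting $S$ destroys only few of the $G(n,p)$-edges inside $N_G(v)$, at most $|S|\,\Delta(G(n,p))\le 21Cm\log n$ of them by the degree bound, this is immediate from the lower bound on their number; this settles vertices $v$ of sufficiently large $G$-degree, and in particular the whole proof in the regime where $m$ is large compared with $n$, where one may also afford a union bound over the candidate sets $D$ weighed against a lower bound on the number of ``dangerous'' pairs $\bigcup_v\binom{N_G(v)\setminus S}{2}$.

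The delicate case is that of smaller $m$: the hypotheses $\delta(G)\ge m$ and $\Delta(G)<n/32$ then force $G$ to be close to $m$-regular, and both the neighbourhoods and the number of triangles through a vertex become too small for a crude union bound to beat the $\binom{n}{3m}$ choices of $D$. Here I expect to split the range of $m$ further and, for the smallest values, to exploit that only $O(m)$ vertices need to be covered, for instance by reserving in advance a few vertices of above-average degree and rooting triangles there, or by carrying out the greedy over a bounded number of independent exposure rounds so that each block of steps sees fresh randomness, thereby reducing matters to the easier regime above.
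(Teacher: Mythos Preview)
Your greedy scheme founders exactly where you fear, and neither of your proposed remedies closes the gap. Consider first the degree-bound argument. In a neighbourhood of size $\deg_G(v)\ge m$ the expected number of $G(n,p)$-edges is of order $p\,m^2\asymp m^2\log n/n$, whereas deleting a set $D$ of size $3m$ may kill up to $3m\cdot\Delta(G(n,p))\asymp m\log n$ of them. For every $m\le n/32$ the latter dominates the former, so the inequality you need simply never holds; even taking $\deg_G(v)$ as large as the hypothesis allows, namely $n/32$, the count in the neighbourhood is $\Theta(n\log n)$ while the deletions cost $\Theta(m\log n)$, which only wins when $m=o(n)$, not up to $n/32$. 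The naive union bound over sets $D$ also fails in most of the range: there are roughly $(en/3m)^{3m}$ choices of $D$, while the failure probability at a fixed $D$ is at best $\exp(-\Theta(p m^2))=\exp(-\Theta(m^2\log n/n))$, and these do not balance for $m$ between $\sqrt n$ and, say, $n/\mathrm{polylog}\,n$. Finally, ``a bounded number of independent exposure rounds'' cannot work uniformly, because for $m$ close to $n/32$ one genuinely needs $\Theta(n)$ rounds.

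The paper's proof avoids adaptivity altogether by a different organisation. It first passes to a bipartite spanning subgraph $G'=G[A,B]$ via a max-cut, so that $\delta(G')\ge m/2$ and the max-degree bound forces $|A|\ge 8m$. Then it builds the triangles in $t=\lceil m^2/(2n)\rceil$ rounds of $s=\lceil 2n/m\rceil$ triangles each, revealing a fresh $G(n,q)$ with $q=C\log n/m^2$ per round (so that $tq\le p$). The point of the bipartition and the max-degree hypothesis is a counting claim: after removing at most $2m$ used centres from $A$ and $4m$ used leaves from $B$, there are still $\ge m$ vertices in $A$ with $\ge m/16$ neighbours in what remains of $B$, and one can pick $s$ of them with \emph{pairwise disjoint} neighbour-sets of size $\lceil m/16\rceil$. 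Now the $s$ events ``$G(n,q)$ has an edge inside the $j$-th set'' are independent, each with probability $1-(1-q)^{\binom{m/16}{2}}$, and a Chernoff bound (not a union bound over adaptive $D$) gives all $s$ edges with probability $1-n^{-2}$. The disjointness of the neighbour-sets, which your one-vertex-at-a-time greedy cannot arrange, is what makes the probabilities multiply correctly; this is the missing idea.
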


With this at hand we can prove Theorem~\ref{thm:sublinear}.

\begin{proof}[Proof of Theorem~\ref{thm:sublinear}]
	Let $1 \le m \le n/256$ and let $G$ be an $n$-vertex graph on vertex set $V$ with minimum degree 
	$\delta(G) \ge m$.
	We let $C$ be large enough such that with $p \ge C \log n/n$ we can expose $G(n,p)$ in four rounds as $\bigcup_{i=1}^4 G_i$ with $G_i \sim G(n,C_i \log n/n)$ for $i=1,\dots,4$ such that the following hold.
	We let $C_1=1$ and observe that, by a union bound, a.a.s.~for any set of vertices $U$ of size at least $n/256$ there is at least one edge in $G_1[U]$.
	Next, we let $C_2$ be large enough such that for a set of vertices $U$ of size at least $n/2$ there are a.a.s.~at least $\log^3 n$ pairwise vertex-disjoint triangles in $G_2[U]$~\cite[Theorem~3.29]{JLR}.
	Finally, let $C_3=1$ (this is sufficient for our application of Proposition~\ref{prop:lesqrt} because we do not need the $\log n$ term) and $C_4$ be such that we can apply Proposition~\ref{prop:gesqrt} with $C_4/2$.
	We expose $G_1$ already now and assume that the described property holds, while we leave $G_2$, $G_3$, and $G_4$ until we need them.
	
	To apply one of the two propositions to a large subgraph $G'$ of $G$ we need $\Delta(G') \le v(G')/32$.
	For this let $V'$ be the set of vertices from $G$ of degree at least $n/64$.
	If $|V'|\ge m$, then we let $V''$ be any subset of $V'$ of size $m$ and we greedily find $m$ pairwise vertex-disjoint triangles in $G \cup G_1$, each containing exactly one vertex from $V''$.
	Indeed, as long as we have less than $m$ triangles there is a vertex $v \in V''$ not yet contained in a triangle.
	Then there is a set $U \subseteq N_G(v) \setminus V''$ of at least $n/64-3m \ge n/256$ vertices not covered by triangles, and we can find an edge within $G_1[U]$ that gives us a triangle containing $v$ and two vertices from $U$.
	
	Otherwise, $|V'|<m$ and we remove $V'$ from $G$ to obtain $G'=G[V \setminus V']$.
	Note that we have $v(G') = n-|V'| \ge n/2$, minimum degree $\delta(G') \ge m'= m-|V'|$, and maximum degree $\Delta(G') < n/64 \le v(G')/32$.
	If $m' < (\log n)^3$, then we a.a.s.~find $m'$ pairwise vertex-disjoint triangles within $G_2[V(G')]$.
	If $(\log v(G'))^3 \le (\log n)^3 \le m' \le \sqrt{v(G')}$, then by Proposition~\ref{prop:lesqrt} and as $\log n/n= \omega(1/v(G'))$ there are a.a.s.~at least $m'$ pairwise vertex-disjoint triangles in $G' \cup G_3[V(G')]$.
	Finally, if $\sqrt{v(G')} \le m' \le n/256 \le v(G')/32$, then by Proposition~\ref{prop:gesqrt} and as $C_4 \log n/n \ge \frac{C_4}{2} \log v(G') / v(G')$, there are a.a.s.~at least $m'$ pairwise vertex-disjoint triangles in $G' \cup G_4[V(G')]$.
	
	Now, that we found $m'$ pairwise vertex-disjoint triangles, we can greedily add triangles by using the $m-m'$ vertices from $V'$ and an edge in their neighbourhood until we have $m$ triangles.
	Analogous to above, as long as we have less than $m$ triangles, for each available vertex $v \in V'$, there is a set $U \subset N_G(v) \setminus V'$ of at least $n/256$ vertices not covered by triangles, and we find an edge within $G_1[U]$.
\end{proof}

It remains to prove Proposition~\ref{prop:lesqrt} and Proposition~\ref{prop:gesqrt}.
For Proposition~\ref{prop:lesqrt}, which deals with the cases $(\log n)^3 \le m \le \sqrt{n}$, we first need to find many large enough vertex-disjoint stars in $G$.
These can be found deterministically with Lemma~\ref{lem:manystars} below and afterwards we will show that a.a.s.~at least $m$ of them can be completed to triangles with the help of $G(n,p)$. 

For any integer $g \ge 2$, we define the \emph{star} on $g+1$ vertices as the graph with one vertex of degree $g$ (this vertex is called the \emph{centre}) and the other vertices of degree one (these vertices are called \emph{leaves}).
Given a star $K$, we denote the number of its leaves by $g_K$. 
Moreover, given a family of vertex-disjoint stars $\cK$, we denote the set of all their centre vertices by $\cK_C$ and the set of all their leaf vertices by $\cK_L$.

\begin{lemma}
	\label{lem:manystars}
	For every $0<\gamma<1/2$ and integer $s$ there exists an $\eps>0$ such that    
	for $n$ large enough and any $m$ with $2/\eps \le m \le \sqrt{n}$ the following holds.
	In every $n$-vertex graph $G$ with minimum degree $\delta(G) \ge m$ and
	maximum degree $\Delta(G) \le \gamma n$ there exists a family $\mathcal{K}$ of
	vertex-disjoint stars in $G$ such that every $K\in {\mathcal K}$ has $g_K$ leaves with
	$\eps m \le g_K \le \eps \sqrt{n}$ and
	\[\sum_{K\in{\mathcal K}} g_K^2 \ge s \eps^2 n m.\]
\end{lemma}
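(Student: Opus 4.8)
The plan is to produce $\cK$ by a greedy peeling procedure: repeatedly pull out of $G$ a star that is as large as the prescribed window $[\eps m,\eps\sqrt n]$ permits, delete its vertices, and iterate; afterwards, lower bound the total number of leaves by a double counting argument that plays the lower bound $\delta(G)\ge m$ against the upper bound $\Delta(G)\le\gamma n$. First I would fix $\eps>0$ small in terms of $\gamma$ and $s$ (the precise dependence falling out of the count below), and assume $n$ large with $\eps\sqrt n\in\NN$.

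\textbf{Construction.} Maintain a set $U$ of \emph{available} vertices, initialised to $V(G)$, and iterate the following: while some $v\in U$ has $|N_G(v)\cap U|\ge\eps m$, pick such a $v$, set $g_v:=\min\{|N_G(v)\cap U|,\eps\sqrt n\}$ — which lies in $[\eps m,\eps\sqrt n]$ precisely because $m\le\sqrt n$ — add to $\cK$ the star with centre $v$ and $g_v$ leaves chosen from $N_G(v)\cap U$, and remove the centre and these leaves from $U$. The process terminates since $|U|$ strictly decreases. Writing $U^\ast$ for the final value of $U$, the stopping rule guarantees that every $v\in U^\ast$ has fewer than $\eps m$ neighbours inside $U^\ast$, so $e\bigl(G[U^\ast]\bigr)<\tfrac12\eps m n$; moreover by construction every $K\in\cK$ has $g_K$ leaves with $\eps m\le g_K\le\eps\sqrt n$, as required.

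\textbf{Counting.} Every edge of $G$ either lies inside $U^\ast$ or meets $V(G)\setminus U^\ast=\bigcup_{K\in\cK}V(K)$, a set of $\sum_{K\in\cK}(g_K+1)$ vertices each of degree at most $\gamma n$. Since $e(G)\ge\tfrac12\delta(G)n\ge\tfrac12 mn$, comparing the two counts bounds $\sum_{K\in\cK}(g_K+1)$ from below, and hence $\sum_{K\in\cK}g_K$ from below as well (using $g_K+1\le\tfrac32 g_K$, which holds because $g_K\ge\eps m\ge2$). To upgrade this to the quantitative bound $\sum_{K\in\cK}g_K\ge s\eps^2 nm$ one has to refine the count: I would peel off at each step a vertex of (near-)maximum available degree, so that the edges destroyed by a star are controlled by the current maximum available degree rather than by $\gamma n$, and then argue that the procedure really consumes a $(1-\eps)$-fraction of $e(G)$ before it stalls — and that it does so through stars that are not too small, e.g.\ that a constant proportion of the extracted stars have close to $\eps\sqrt n$ leaves (a convexity/Cauchy--Schwarz step relates $|\cK|$, $\sum_K g_K$ and the weighted totals once one knows the sizes are not all at the bottom of the window). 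Tracking the constants through these estimates and choosing $\eps$ small enough accordingly gives the claim; the lower cut-off $m\ge 2/\eps$ is exactly what makes $g_K\ge\eps m\ge 2$, so that every star can later be completed to a triangle by a single random edge.

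\textbf{Main obstacle.} The delicate point is entirely in the counting step. The naive estimate ``deleting a star on $g+1$ vertices kills at most $(g+1)\gamma n$ edges'' is too lossy to reach the factor $s\eps^2 n$, so the crux is to show, using the interplay of $\delta(G)\ge m$, $\Delta(G)\le\gamma n$, and the two thresholds $\eps m$ and $\eps\sqrt n$ bounding the star sizes, that the peeling cannot stall until it has both absorbed almost all the edges of $G$ \emph{and} built up enough large stars; carefully choosing which vertex to peel (maximum available degree) and analysing how the available degree sequence evolves under deletion is where the real work lies.
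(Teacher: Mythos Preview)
First, a warning about the statement itself: the displayed inequality should read $\sum_{K\in\cK} g_K^2 \ge s\eps^2 nm$, not $\sum_{K\in\cK} g_K$. The version you are attempting is literally false when $m$ is close to $\sqrt n$, since $\sum_K g_K\le n$ trivially while $s\eps^2 nm$ can be of order $n^{3/2}$. The paper's proof establishes the $\sum g_K^2$ bound, and that is what is used downstream in the proof of Proposition~\ref{prop:lesqrt}.

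Turning to your argument: the counting step is a genuine gap, as you yourself flag. The naive bound gives $\sum_K(g_K+1)\gtrsim m/\gamma$, which is a factor of $n$ away from either target. Your proposed fix --- peel maximum-available-degree vertices and track the degree sequence --- is not carried out, and I do not see a direct way through: after peeling a few large stars the available degrees can drop uniformly below $\eps\sqrt n$, and then the greedy only produces stars contributing roughly $\eps^2 m^2$ each to $\sum g_K^2$; nothing in the greedy forces the roughly $n/m$ such stars one would then need before stalling.

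The paper takes a fundamentally different route. Rather than building $\cK$ greedily, it takes $\cK$ to be a family (with $g_K\in[\eps m,\eps\sqrt n]$) that \emph{maximises} $\sum_K g_K^2$, and derives a contradiction with $\Delta(G)\le\gamma n$ under the assumption that this maximum is below $s\eps^2 nm$. The point is that optimality yields, for free, structural constraints the greedy does not: writing $R$ for the uncovered vertices, (i) each $v\in R$ has $<\eps m$ neighbours in $R$; (ii) $R$ sends \emph{no} edges to centres of sub-maximal stars; (iii) a leaf of a star with $g$ leaves has $<g+1$ neighbours in $R$; (iv) a leaf of a maximal star has $<\eps\sqrt n$ neighbours in $R$. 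Splitting $\cK$ into the maximal stars $\cH$ (with $g_K=\eps\sqrt n$) and the rest $\cM$, the assumed upper bound on $\sum g_K^2$ plus a Cauchy--Schwarz step bounds $|\cH_C|\le sm$ and $|\cM_L|\le s\eps n$, whence $|R|\ge n/2$; then (i)--(iv) force almost all of the $\ge m|R|$ edges leaving $R$ to land in $\cH_C$, a set of at most $sm$ vertices, so pigeonhole gives a vertex of degree exceeding $\gamma n$. Your greedy stopping rule recovers only (i); the extremal setup is exactly what makes (ii)--(iv) available and closes the argument.
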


\begin{proof}[Proof of Lemma~\ref{lem:manystars}]
	
	Given $0<\gamma<1/2$ and an integer $s$ we let $\eps>0$ such that $\eps\le 1/(6s)$ and $\eps < 1/2- s \gamma$.
	Moreover, we let $n$ be large enough for our calculations and, for simplicity, assume that $\eps \sqrt{n}$ is an integer.
	Then let $2/\eps \le m \le \sqrt{n}$ and $G$ be an $n$-vertex graph on vertex set $V$ with $\delta(G) \ge m$ and $\Delta(G) \le \gamma n$.
	
	Let $\mathcal{K}$ be a family of vertex-disjoint stars in $G$ with $\eps m \le g_K \le \eps \sqrt{n}$ for all $K \in \cK$, that maximizes the sum
	\begin{equation}
		\label{eq:sum}
		\sum_{K \in \cK} g_K^2
	\end{equation}
	among all such families. Note that each star in $\cK$ has at least $2$ leaves because $g_K \ge \eps m$ and $m \ge 2/\eps$.
	
	If the sum in~\eqref{eq:sum} is bigger than $s \eps^2 n m$ we are done.
	So we assume the family $\mathcal{K}$ satisfies
	\begin{equation}
		\label{eq:smallsum}
		\sum_{K \in \mathcal{K}} g_K^2 < s \eps^2 n m.
	\end{equation}
	We are going to prove that then there exists a vertex of degree larger than $\gamma n$, contradicting our assumption on the maximum degree.
	
	For this we split $\mathcal{K}$ into two subfamilies
	\begin{align*}
		\cM=\left\{K \in \mathcal{K}:\eps m \le g_K <  \eps \sqrt{n}\right \} \quad \text{and} \quad
		\cH=\left\{K \in \mathcal{K}:g_K=  \eps \sqrt{n}\right  \}
	\end{align*}
	and we let $R$ be the set of vertices not covered by the stars in $\cK$, that is $R=V(G) \setminus (\cH_C \cup \cH_L \cup \cM_C \cup \cM_L)$, where $\cH_C$, $\cH_L$, $\cM_C$, and $\cM_L$ are obtained from $\cM$ and $\cH$ as defined above.
	
	For all stars $K \in \cH$ we have $g_K^2=\eps^2 n$.
	From~\eqref{eq:smallsum} we get that the subfamily $\cH$ contains at most $s m$ stars and hence
	\begin{gather}
		\label{eq:H_c_l}
		|\cH_C| \le sm \le s \sqrt{n} \quad  \text{and} \quad
		|\cH_L| = |\cH_C| \eps \sqrt{n} \le s m \eps \sqrt{n} \le s \eps n,
	\end{gather}
	because $m \le \sqrt{n}$.
	
	As each star in $\cM$ has at least $\eps m$ leaves we have $\sum_{K \in \cM} g_K \ge |\cM| \eps m$.
	Using the Cauchy-Schwarz inequality, we then get
	\[\left(\sum_{K \in \cM} g_K\right)^2 \le \left(\sum_{K \in \cM} g_K^2 \right) |\cM| \leBy{\eqref{eq:smallsum}} s \eps^2 nm |\cM| \le \left(\sum_{K \in \cM} g_K\right) s \eps n, \]
	which implies 
	\begin{align}
		\label{eq:M_l}
		|\cM_L| = \sum_{K \in \cM} g_K   \le s \eps n.
	\end{align}
	Therefore, $|\cM_C| \le |\cM_L|/2 \le s \eps n/2$ since each star has at least $2$ leaves.
	
	These bounds on $\cM_L$ and $\cM_C$ together with~\eqref{eq:H_c_l} immediately imply that $|R| \ge (1-3 s \eps) n \ge n/2$.
	We are going to show that there are many edges between $R$ and $\cH_C$ and from that we derive the existence of a high degree vertex, giving the desired contradiction.
	
	A vertex in $R$ cannot have at least $\eps m$ neighbours inside $R$, because otherwise we could create a new star and increase the sum in~\eqref{eq:sum}.
	Therefore, $e(R) < |R| \eps m /2$.
	We also have $e(R,\cM_C)=0$ since otherwise we could add an edge to one of the existing stars in $\cK$ increasing the sum in~\eqref{eq:sum} (recall that stars in $\cM$ have less than $\eps \sqrt{n}$ leaves).
	
	Given a leaf $v \in \cM_L$ that belongs to a star $K$ with $g_K$ leaves, we must have $\deg(v,R) < g_K+1$. 
	Otherwise, we could take $V' \subset N_R(v)$ of size $|V'|=g_k+1  \le \eps \sqrt{n}$ and create a new family of vertex-disjoint stars, given by $\cK \setminus \{K\}$ and the star on $v \cup V'$, to increase the sum in~\eqref{eq:sum}.
	Therefore,
	\[e(R,\cM_L) \le \sum_{K \in \cM} g_K (g_K+1) \leq \sum_{K \in \cK} g_K^2 + \sum_{K \in \cM} g_K \leBy{\eqref{eq:smallsum},\eqref{eq:M_l}} s \eps^2 n m + s \eps  n.\]
	Similarly, given $v \in \cH_L$, we must have $\deg(v,R) < \eps \sqrt{n}$.
	Otherwise, we could take $V' \subset N_R(v)$ of size $|V'|=\eps \sqrt{n}$ and create a new family of vertex-disjoint stars, given by $\cK \setminus \{K\}$, the star $K \setminus \left\{v\right\}$, and the star on $v \cup V'$, to increase the sum in~\eqref{eq:sum}.
	Therefore, $e(R,\cH_L) \le |\cH_L| \eps \sqrt{n} \le s \eps^2 n m$ by~\eqref{eq:H_c_l}.
	
	On the other hand, $\delta(G) \ge m$ implies $e(R,V) \ge m |R|$, where the edges inside of $R$ are counted twice.
	Then we can lower bound the number of edges between $R$ and $\cH_C$ by
	\begin{align*}
		e(R,\cH_C) & \ge m |R|-2 e(R)-e(R,\cM_C)-e(R,\cM_L)-e(R,\cH_L) \\
		&\ge m |R| - \eps m |R|- 0 - 2 s \eps^2 n m - s \eps n \\
		&\ge m |R| - \eps m |R| - 4 s \eps^2 m |R| - s \eps^2 m |R| \\
		&\ge (1-\eps - 5 s \eps^2) |R| m \ge  |R|(1-2 \eps)m.
	\end{align*}
	where we used the bounds on $e(R)$, $e(R,\cM_C)$, $e(R,\cM_L)$, and $e(R,\cH_L)$ we found above, together with $|R| \ge n/2$, $\eps m \ge 2$ and the choice of $\eps < 1/(6s)$.
	In particular, as $|\cH_C| \le sm$ and using $\eps < 1/2- s \gamma$, and $|R| > n/2$, there exists a vertex $v \in \cH_C$ of degree
	\[\deg(v) \ge \deg(v,R) \ge \frac{|R| (1-2 \eps)m}{sm} \ge |R| 2 \gamma > \gamma n.\]
	This contradicts the maximum degree of $G$.
\end{proof}

\begin{proof}[Proof of Proposition~\ref{prop:lesqrt}]
	Let $n$ be sufficiently large for the following arguments.
	With $0<\gamma<1/2$ let $G$ be an $n$-vertex graph with maximum degree $\Delta(G) \le \gamma n$ and minimum degree $\delta(G) \ge m$.
	With $(\log n)^3 \le m \le \sqrt{n}$, we first find many vertex-disjoint stars in $G$ and then complete at least $m$ of them to triangles with the help of $G(n,p)$.
	We apply Lemma~\ref{lem:manystars} with $\gamma$ and $s=8$ to get $0<\eps<1/2$ and, as $n$ is large enough and $m \ge 2/\eps$, we get a family $\cK$ of vertex-disjoint stars on $V(G)$ such that $\eps m \le g_K \le \eps \sqrt{n}$ for $K \in \cK$ and $\sum_{K \in \cK} g_K^2 \ge 8 \eps^2 n m$.
	
	As we have stars of different sizes, we split $\cK$ into $t=\lceil \log (\sqrt{n}/m)/\log 2 \rceil+1$ subfamilies
	\[\cK_i = \{ K \in \cK \,\colon 2^{i-1} \eps m \le g_K < 2^i \eps m \}, \quad 1\le i \le t,\]
	and set $k_i =|\cK_i|$. 
	
	By deleting leaves, we may assume that all stars in $\cK_i$ have exactly $\lceil 2^{i-1} \eps m \rceil$ leaves.
	Denote by $\cI$ the set of indices $i \in [t]$ such that $k_i \left( 2^{i-1} \eps m \right)^2   \ge \eps^2 n m /t$.
	Next we prove that 
	$\sum_{i \in \cI} k_i \left( 2^{i-1} \eps m \right)^2 \ge \eps^2 nm.$
	
	Observe first that $\sum_{i \not\in \cI} k_i \left( 2^{i-1} \eps m \right)^2 \le t (\eps^2 nm/t)=\eps^2 nm$.
	It follows that
	\begin{eqnarray*}
		\sum_{i \in \cI} k_i \left( 2^{i-1} \eps m \right)^2 &=& \frac{1}{4} \sum_{i \in \cI} k_i \left( 2^i \eps m \right)^2 =
		\frac{1}{4} \sum_{i=1}^t |\cK_i| \left( 2^i \eps m \right)^2 
		-\sum_{i \not\in \cI} k_i \left( 2^{i-1} \eps m \right)^2\\
		&\ge&\frac{1}{4} \sum_{i=1}^t \sum_{K\in\cK_i} g_K^2 - \eps^2 nm\ge 2 \eps^2 nm - \eps^2 nm \ge \eps^2 nm.
	\end{eqnarray*}
	
	Now we reveal random edges on $V(G)$ with probability $p \ge C /n$ where $C$ is large enough for the Chernoff bounds and inequalities below.
	We shall show that this allows us to find at least $m$ triangles~a.a.s..
	Indeed, for each $i \in \cI$, we find many pairwise vertex-disjoint triangles in $\cK_i$ using random edges.
	\begin{claim}
		\label{claim:StarsTriangles}
		For any $i \in \cI$, after revealing edges of $G(n,p)$ with $p \ge C /n$ we have with probability at least $1-1/n$ at least $k_i (2^{i-1}m)^2/ n$ pairwise vertex-disjoint triangles within $(G \cup G(n,p))[\cup_{K \in \cK_i} V(K)]$.
	\end{claim}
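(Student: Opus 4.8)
The plan is to work star by star. For $K\in\cK_i$ let $c_K$ be its centre and $L_K$ its set of leaves, so that $|L_K|=g:=\lceil 2^{i-1}\eps m\rceil$, and let $X_K$ be the indicator of the event that $G(n,p)$ contains an edge with both endpoints in $L_K$. When $X_K=1$, picking such an edge $\ell\ell'$ gives a triangle $\{c_K,\ell,\ell'\}$ inside $V(K)$, since $c_K\ell,c_K\ell'\in E(G)$. As the sets $(L_K)_{K\in\cK_i}$ are pairwise disjoint, the variables $(X_K)_{K\in\cK_i}$ are mutually independent, and the triangle produced by a good star $K$ lies inside $V(K)$, so triangles coming from distinct good stars are automatically vertex-disjoint. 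Hence $Y:=\sum_{K\in\cK_i}X_K$ is a lower bound for the number of pairwise vertex-disjoint triangles in $(G\cup G(n,p))[\bigcup_{K\in\cK_i}V(K)]$, and it suffices to show that $Y\ge k_i(2^{i-1}m)^2/n$ with probability at least $1-1/n$.

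I would then estimate $\EE[Y]$ from below. Since $i\in\cI$ we have $k_i\ge 1$, so every star of $\cK_i$ originally had between $2^{i-1}\eps m$ and $\eps\sqrt n$ leaves; in particular, writing $\tau:=(2^{i-1}m)^2/n$, we have $\tau\le 1$. Because $g\ge 2^{i-1}\eps m\ge\eps m\ge 2$ for $n$ large, we get $\binom{g}{2}\ge (2^{i-1}\eps m)^2/4$, so with $p\ge C/n$,
\[
\PP[X_K=1] \;=\; 1-(1-p)^{\binom{g}{2}} \;\ge\; 1-\exp(-C\eps^2\tau/4).
\]
Provided $\tau$ is bounded away from $1$, elementary manipulation of this bound (using $1-e^{-x}\ge x/2$ for small $x$ and $1-e^{-x}\ge 1-c_0/2$ when $x$ is large, with $C$ chosen large in terms of $\eps$) shows that $\EE[Y]=k_i\,\PP[X_K=1]$ exceeds the target $k_i\tau$ by a fixed multiplicative slack. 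Moreover, the hypothesis $i\in\cI$ makes the target grow: $k_i(2^{i-1}\eps m)^2\ge\eps^2 nm/t$ yields $k_i\tau\ge m/t\to\infty$, since $m\ge(\log n)^3$ and $t=O(\log n)$. Consequently $\EE[Y]\gg\log n$, so the lower-tail Chernoff bound (Lemma~\ref{lem:chernoff}) gives $Y\ge k_i(2^{i-1}m)^2/n$ with failure probability at most $1/n$, as required.

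The step I expect to be the main obstacle is precisely the estimate for $\PP[X_K=1]$ in the regime where $\tau=(2^{i-1}m)^2/n$ is close to $1$, that is, when the stars of $\cK_i$ have close to $\eps\sqrt n$ leaves: then $p\binom{g}{2}$ is only a bounded constant, so each individual star is good only with a probability strictly below $1$, whereas the target $k_i(2^{i-1}m)^2/n\approx k_i$ essentially demands one triangle per star, and a plain first-moment-plus-Chernoff argument does not close. Handling this boundary case requires a more careful treatment — sharpening the constants and the available slack, and, if need be, recovering the few missing triangles by a different device — so that the number of good stars still comfortably beats $k_i(2^{i-1}m)^2/n$. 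For all remaining, "unbalanced" indices $i$, from which the bulk of the triangles come anyway, the estimate above holds with a large slack and the argument is routine.
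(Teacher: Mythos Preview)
Your approach is exactly the paper's: for each $K\in\cK_i$ let $X_K$ indicate a random edge inside the leaf set, use independence across the disjoint leaf sets, lower-bound $\EE[Y]$, and apply Chernoff.

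The boundary obstruction you flag at $\tau=(2^{i-1}m)^2/n\approx 1$ is genuine for the target $k_i(2^{i-1}m)^2/n$ as printed, and cannot be repaired by sharpening constants: when $2^{i-1}m$ is close to $\sqrt n$ one would need almost every star to succeed, yet each succeeds only with probability bounded away from~$1$. The paper's own proof does not hit that target either. What it actually establishes is $Y\ge k_i g^2/n$ with $g=\lceil 2^{i-1}\eps m\rceil$, i.e.\ the stated bound weakened by a factor~$\eps^2$. For that target there is no boundary issue, because $g^2/n\le\eps^2<1/4$ uniformly: one verifies $1-2g^2/n\ge(1-C/n)^{\binom{g}{2}}$ directly (via $1-x\le e^{-x}\le 1-x/2$ for $x<3/2$, applied with $x=4g^2/n$, and taking $C$ large), whence $\PP[X_K=1]\ge 2g^2/n$, and then Chernoff gives failure probability at most $\exp\bigl(-k_ig^2/(4n)\bigr)\le\exp\bigl(-\eps^2 m/(4t)\bigr)\le 1/n$ since $m\ge(\log n)^3$ and $t\le\log n$. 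So your argument closes cleanly once the target is read with the missing $\eps$; no separate device for the top dyadic scale is needed, and the claim as stated appears to contain a typo.
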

	Having this claim and since $|\cI| \le t =o (n)$, with a union bound over $i \in \cI$, there are a.a.s.~at least
	\[\sum_{i \in \cI} k_i (2^{i-1}m)^2/ n = \frac{1}{\eps ^2 n} \sum_{i \in \cI} k_i \left( 2^{i-1} \eps m \right)^2 \ge \frac{\eps^2 nm}{\eps^2 n} \ge m\]
	pairwise vertex-disjoint triangles in $G\cup G(n,p)$.
	It remains to prove Claim \ref{claim:StarsTriangles}.
	
	\begin{claimproof}[Proof of Claim~\ref{claim:StarsTriangles}]
		Fix $i \in \cI$ and let $k=k_i$ and $g = \lceil 2^{i-1} \eps m \rceil$.
		We reveal random edges with probability $p$ within each set of leaves of the $k$ stars in $\cK_i$.
		We recall that these $k$ sets are pairwise disjoint and each has size $g$.
		Let $X_j$ be the indicator variable of the event that the $j$-th of these sets contains at least one edge for $1 \le j \le k$, and set $X = \sum_{j=1}^kX_i$.
		Then $\PP[X_j=1]=1-(1-p)^{\binom{g}{2}}$ and $\EE[X] = k \left( 1-(1-p)^{\binom{g}{2}} \right)$.
		We have that $\EE[X] \ge 2 k g^2/(\eps^2 n)$.
		Indeed,
		\begin{align*}
			k \left( 1-(1-p)^{\binom{g}{2}} \right)  \ge 2 k g^2/(\eps^2 n) \quad
			\Leftrightarrow  \quad 1-2g^2/(\eps^2 n) \ge \left( 1- \frac{C }{n}\right)^{\binom{g}{2}},
		\end{align*}
		and the later holds for large enough $C$ and $n$ using the inequality $1-x\le e^{-x}\le 1-\frac{x}{2}$ valid for $x<3/2$.
		
		From Chernoff's inequality (Lemma~\ref{lem:chernoff}) and from the fact that $k g^2/(\eps^2 n) \ge m/t$ by the definition of $\cI$, it follows that with probability at most
		\[2 \exp \left( -\frac{1}{6} \frac{k g^2}{\eps^2 n} \right) \le 2 \exp\left( - \frac{1}{6} \frac{m}{t} \right) \le \frac{1}{n}\]
		there are less than $kg^2/(\eps^2 n)$ triangles, where the last inequality holds as $t \le \log n$, $m \ge (\log n)^3$ and $n$ is large enough.
	\end{claimproof}
\end{proof}

\begin{proof}[Proof of Proposition~\ref{prop:gesqrt}]
	Let $G$ be an $n$-vertex graph with maximum degree $\Delta(G) \le n/32$ and minimum degree $\delta(G) \ge m$.
	With $\sqrt{n} \le m \le n/32$ we can not hope to find sufficiently many large enough vertex-disjoint stars (as we did in the proof of Proposition~\ref{prop:lesqrt}).
	Instead we apply a greedy strategy, using that a.a.s.~every vertex of $G$ has random edges in its neighbourhood.
	We can greedily obtain a spanning bipartite subgraph $G' \subseteq G$ of minimum degree $\delta(G') \ge m/2$ by taking a partition of $V(G)$ into sets $A$ and $B$ such that $e_G(A,B)$ is maximised and letting $G'=G[A,B]$.
	Indeed, a vertex of degree less than $m/2$ can be moved to the other class to increase $e_G(A,B)$.
	W.l.o.g.~we assume $|B|\ge n/2\ge |A|$.
	Moreover, we have $|A| \ge 8m$, as otherwise with $e(A,B) \ge nm/4$ there is a vertex of degree larger than $n/32$, a contradiction.
	
	\begin{claim}	\label{claim:A'B'}
		For every $A' \subseteq A$, $B' \subseteq B$ with $|A'| < 2m$, $|B'| \ge n/4$ we have $e(A \setminus A',B') \ge n m/16$.
	\end{claim}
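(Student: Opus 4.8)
The plan is to prove this by a short double-counting argument that uses only the minimum degree of the bipartite graph $G'$ on one side and the maximum degree of $G$ on the other. First I would bound $e(A,B')$ from below: since $G'=G[A,B]$ has minimum degree $\delta(G')\ge m/2$, every vertex of $B'$ sends at least $m/2$ edges into $A$ within $G'\subseteq G$, so $e(A,B')\ge |B'|\cdot m/2\ge (n/4)(m/2)=nm/8$.

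Next I would bound $e(A',B')$ from above using the maximum degree hypothesis: since $|A'|\le 2m$ and $\Delta(G)<n/32$, we get $e(A',B')\le |A'|\,\Delta(G)<2m\cdot n/32=nm/16$.

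Finally, since $A'\subseteq A$ and $B'\subseteq B$, the edges counted by $e(A,B')$ split according to whether their endpoint in $A$ lies in $A'$ or in $A\setminus A'$, that is, $e(A,B')=e(A',B')+e(A\setminus A',B')$. Combining the two bounds gives $e(A\setminus A',B')\ge nm/8-nm/16=nm/16$, as required.

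The only real obstacle is picking the right direction for the count. The naive alternative is to write $e(A\setminus A',B')\ge e(A\setminus A',B)-e(A\setminus A',B\setminus B')$ and apply $\delta(G')\ge m/2$ to the vertices of $A\setminus A'$; but $|B\setminus B'|$ can be as large as roughly $3n/4$, so the error term $\Delta(G)\,|B\setminus B'|$ swamps the main term. Counting instead from the $B'$ side sidesteps this, because it is $B'$ (not $A\setminus A'$) whose size we control, and the loss incurred by deleting $A'$ is then small since it is governed by the tiny set $A'$ together with the maximum degree bound. After that, the rest is elementary arithmetic with the given inequalities $|A'|\le 2m$, $|B'|\ge n/4$, and $\Delta(G)<n/32$.
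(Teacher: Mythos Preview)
Your proof is correct and takes essentially the same approach as the paper: both bound $e(A,B')$ from below via the minimum degree of $G'$ and then control $e(A',B')$ via the maximum degree bound together with $|A'|\le 2m$. The paper phrases the second step as a contradiction (if $e(A',B')\ge nm/16$ then pigeonhole gives a vertex of degree at least $n/32$ in $A'$), which is just the contrapositive of your direct bound $e(A',B')\le |A'|\,\Delta(G)<nm/16$.
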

	\begin{claimproof}
		If $e(A \setminus A',B') < n m/16$, it follows from $e(A,B') \ge |B'| m /2\ge nm/8$ that we have $e(A',B') \ge n m /16$. Since $|A'| < 2m$, there must be a vertex of degree at least $n/32$ in $A'$, a contradiction.
	\end{claimproof}
	
	From this claim it follows that there are many vertices of high degree in $A \setminus A'$.
	\begin{claim}\label{claim:largeA*}
		Suppose that $A' \subseteq A$, $B' \subseteq B$ with $|A'| < 2m$, $|B'| \ge n/4$. 
		Let $ A^* = \{ v \in A \setminus A' \,\colon\, \deg (v,B') \ge m/16 \}$. Then $|A^*| \ge m.$
	\end{claim}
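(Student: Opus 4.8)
The plan is to derive Claim~\ref{claim:largeA*} directly from Claim~\ref{claim:A'B'} by a short double-counting argument on the edges between $A \setminus A'$ and $B'$. First I would invoke Claim~\ref{claim:A'B'} (whose hypotheses $|A'| \le 2m$ and $|B'| \ge n/4$ are exactly those assumed here) to obtain $e(A \setminus A', B') \ge nm/16$. Then I would bound this same edge count from above by partitioning $A \setminus A'$ into $A^*$ and its complement. By the definition of $A^*$, every vertex of $(A \setminus A') \setminus A^*$ has fewer than $m/16$ neighbours in $B'$; since $|A| \le n/2$ (established just before the claim), these vertices contribute at most $(n/2)(m/16) = nm/32$ edges in total. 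Every vertex of $A^*$ has at most $\Delta(G) < n/32$ neighbours in $B'$, so $A^*$ contributes at most $|A^*|\cdot n/32$ edges.

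Combining the lower bound from Claim~\ref{claim:A'B'} with this upper bound gives
\[ \frac{nm}{16} \le e(A \setminus A', B') < |A^*| \cdot \frac{n}{32} + \frac{nm}{32}, \]
and rearranging yields $|A^*| \cdot \frac{n}{32} > \frac{nm}{32}$, i.e.\ $|A^*| > m$, so in particular $|A^*| \ge m$, as claimed.

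There is essentially no obstacle here; the content is just a counting estimate. The only places the hypotheses enter are: the bound $|A| \le n/2$, which keeps the contribution of the low-degree vertices strictly below half of the lower bound $nm/16$; and the maximum-degree assumption $\Delta(G) < n/32$ of Proposition~\ref{prop:gesqrt}, which controls the per-vertex contribution of $A^*$. Both are already in force at this point in the proof, and there is plenty of slack in the constants, so one could conclude $|A^*| \ge m$ even with somewhat weaker bounds.
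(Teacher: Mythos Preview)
Your proof is correct and follows essentially the same approach as the paper: both invoke Claim~\ref{claim:A'B'} for the lower bound $e(A\setminus A',B')\ge nm/16$, split the upper bound into the contribution of $A^*$ (bounded via $\Delta(G)<n/32$) and its complement in $A\setminus A'$ (bounded via $|A|\le n/2$ and the threshold $m/16$), and rearrange. The only cosmetic difference is that the paper writes the chain as $|A^*|\,n/32 + |A|\,m/16 \ge e(A\setminus A',B') \ge nm/16$ and then substitutes $|A|\le n/2$, arriving at $|A^*|\ge m$ rather than your strict inequality.
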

	\begin{claimproof}
		We have $|A^*| n/32 + |A| m/16 \ge e(A^*,B') +e(A \setminus (A'\cup A^*),B') = e(A \setminus A',B') \ge nm/16$, where the last inequality uses Claim~\ref{claim:A'B'}.
		Since $|A| \le n/2$, we get
		\[ |A^*| \ge \frac{nm/16-nm/32}{n/32} = m.\]
	\end{claimproof}
	
	Let $s=\lceil \frac{2n}{m} \rceil$ and $t= \lceil \frac{m^2}{2n} \rceil$.
	We will now iteratively construct our $m$ triangles in $t$ rounds of $s$ triangles each.
	In each round we will reveal $G(n,q)$ with $q = \frac{C \log n}{m^{2}}$, where $C$ is large enough for the Chernoff bound below.
	For the start we set $A'=B_0=\emptyset$.
	
	Let $i=1,\dots,t$, suppose that before the $i$-th round we have 
	\[ 
	|A'| = (i-1)s \le \left( \Big \lceil \frac{m^2}{2n} \Big \rceil -1 \right) \Big \lceil \frac{2n}{m} \Big \rceil \le \frac{m^2}{2n} \left( \frac{2n}{m} +1 \right)< 2m
	\]
	and $|B_0|=(i-1)(2s) < 3m$, and note this is true for $i=1$.
	In the $i$-th round we pick vertices $v_1,\dots,v_s \in A \setminus A'$ and pairwise disjoint sets $B_1,\dots,B_s \subseteq B \setminus B_0$, each of size $\lceil m/16 \rceil$, such that $B_j \subset N_{G'}(v_j)$ for each $j=1,\dots,s$.
	We can do this greedily, where for $j=1,\dots,s$ we set $B'=B \setminus (B_0 \cup B_1 \cup \dots \cup B_{j-1})$ and apply Claim~\ref{claim:largeA*} to obtain a vertex $v_j \in A \setminus A'$ together with a set $B_j \subseteq B'$ of $\lceil m/16 \rceil$ neighbours of $v_j$.
	We can do this as $|A'| < 2m$ and $|B'| \ge n/2 - s \lceil m/16 \rceil - |B_0| \ge n/4$ as $m\le n/32$.
	
	Now we reveal additional edges at random with probability $q$.
	Then with probability at least $1-1/n^2$ we have at least one edge in each set $B_1, \dots, B_s$.
	Indeed the probability that there is no edge in a set $B_i$ is at most $(1-q)^{\binom{|B_i|}{2}} \le \exp(-C \tfrac{\log n}{m} \binom{\lceil m/16 \rceil}{2}) \le  n^{-3}$ as $C$ is large enough.
	Therefore the probability that there is a set without any edge is at most $s n^{-3} \le n^{-2}$ by a union bound.
	We fix an arbitrary edge from each $B_i$ and together with $v_1,\dots,v_s$ this gives us $s$ triangles.
	We add the vertices $v_1,\dots,v_s$ to $A'$ and the vertices of the edges that we chose to $B_0$.
	Notice that $|A'|=is$ and $|B'|=i(2s)$, as required at the beginning of next round.
	
	We can repeat the above $t$ times because with $m \ge \sqrt{n}$ we get $t q \le \frac{C \log n}{n} = p$.
	By a union bound over the $t=\lceil \frac{m^2}{2n} \rceil \le n$ rounds, we get that we succeed a.a.s.~and find $t s \ge m$ triangles.
	
\end{proof}

\section{Proof of the auxiliary lemmas}
\label{sec:auxiliary}

In this section we prove Lemmas~\ref{lem:tripartite},~\ref{lem:tripartite2}, and~\ref{lem:bipartite}.
For each of them, we first give an overview of the strategy and then a full proof.

\subsection{Proof of Lemmas~\ref{lem:tripartite} and~\ref{lem:tripartite2}.}
We describe the general setup of both Lemmas~\ref{lem:tripartite} and~\ref{lem:tripartite2}.
Let $G$ be a graph on $U \cup V \cup W$ with $(V,U)$ and $(V,W)$ being super-regular with respect to $G$.
We will find all/most triangles, respectively, with one vertex in each of the sets $U$, $V$, and $W$, with the edges between $U$ and $W$ coming from the random graph.
To find these edges we consider a random matching $M$ in $G(U,W,p)$ such that each matching edge is contained in many triangles with the third vertex from $V$.
As we later want to match edges from $M$ to vertices from $V$, in order to get triangles, we consider the following bipartite auxiliary graph.
Given a matching $M$ between $U$ and $W$ the vertex set of the graph $H_G(M,V)$ consists of $M$ and $V$ and there is an edge between $m \in M$ and $v \in V$ if and only if the vertices of $m$ are incident to $v$ in $G$.
The lemma below states that with $ p \ge C/n$ we can a.a.s.~find a large matching $M$ such that additionally $(M,V)$ gives a super-regular pair in $H_G(M,V)$.
Observe that a matching within $H_G(M,V)$ induces pairwise vertex-disjoint triangles in $G \cup G(U,W,p)$.

\begin{lemma}
	\label{lem:trireg}
	For any $0 < d,\delta,\eps' < 1$ with $2\delta \le d$ there exist $\eps,C>0$ such that the following holds.
	Let $G$ be a graph on $U \cup V \cup W$, with $|V|=n$ and $(1-1/2)n \le |U|=|W| \le (1 + 1/2) n$, such that $(V,U)$ and $(V,W)$ are $(\eps,d)$-super-regular pairs with respect to $G$.
	Further, let $G(U,W,p)$ be a random graph with $p \ge C/n$.
	Then a.a.s.~there exists a matching $M \subseteq G(U,W,p)$ of size $|M| = (1-\delta)|W|$ such that the pair $(M,V)$ is $(\eps',d^3/32)$-super-regular with respect to the auxiliary graph $H_G(M,V)$.
\end{lemma}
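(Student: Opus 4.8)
The plan is to reveal $G(U,W,p)$, build inside it a suitably random near-perfect matching $M$ all of whose edges have large codegree into $V$, and then verify that the auxiliary pair $(M,V)$ inherits enough structure from the super-regular pairs $(V,U)$ and $(V,W)$ to be super-regular in $H_G(M,V)$. Throughout write $n':=|U|=|W|\in[n/2,3n/2]$, and choose $\eps$ small and $C$ large in terms of $d,\delta,\eps'$.

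\textbf{Cleaning step.} First I would pass to the bipartite graph $\tilde G$ on $(U,W)$ in which $u\sim w$ precisely when $|N(u,V)\cap N(w,V)|\ge d^2n/2$. Applying the Minimum Degree Lemma (Lemma~\ref{lem:MDL}) to the $\eps$-regular pair $(W,V)$ with $Y=N(u,V)$ (which has size at least $dn\ge\eps n$ by super-regularity) shows that for every $u\in U$ all but $\eps n'$ vertices $w$ satisfy $|N(u,V)\cap N(w,V)|\ge(d-\eps)^2n\ge d^2n/2$, so $\delta(\tilde G)\ge(1-\eps)n'$. By the same token the regularity of $(V,U)$ and $(V,W)$ gives $|N(v,U)|=(\rho_{UV}\pm\eps)n'$ and $|N(v,W)|=(\rho_{WV}\pm\eps)n'$ for all but $O(\eps)n$ vertices $v\in V$, where $\rho_{UV}:=d(V,U)\ge d$ and $\rho_{WV}:=d(V,W)\ge d$, while $|N(v,U)|,|N(v,W)|\ge dn'$ for every $v$.

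\textbf{The random matching.} Since $\delta(\tilde G)\ge(1-\eps)n'$, the random subgraph $\tilde G\cap G(U,W,p)$ a.a.s.\ has minimum degree of order $C$, and a standard near-perfect matching statement for sparse random graphs (in the spirit of Lemma~\ref{lem:matching}, but available already at $p\ge C/n$) shows that it a.a.s.\ contains a matching covering at least $(1-\delta)n'$ vertices on each side; I would take $M$ produced by a random greedy process. Then $M\subseteq\tilde G$, so $\deg_{H_G}(m)=|N(u_m,V)\cap N(w_m,V)|\ge d^2n/2\ge(d^3/32)|V|$ for every $m\in M$. The crucial property is that $M$ is spread: for each fixed $v\in V$ the quantity $\deg_{H_G}(v)=|\{m\in M:\,m\subseteq N(v)\}|$ changes by at most one at each step of the process, so by Azuma's inequality it is concentrated within $o(n')$ of its mean, and the mean is $\asymp|N(v,U)|\,|N(v,W)|/n'$ (and equals a common value up to $O(\eps)n'$ whenever $v$ has typical degrees into $U$ and $W$). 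A union bound over the $n$ choices of $v$ then gives, a.a.s., that $\deg_{H_G}(v)\ge d^2n'/2\ge(d^3/32)|M|$ for all $v$, that the density $\rho:=d(M,V)$ is at least $d^2/2$, and that $\deg_{H_G}(v)$ equals a common value $\mu$ up to $O(\eps)n'$ for all but $O(\eps)n$ vertices $v$.

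\textbf{Super-regularity, and the main obstacle.} It remains to prove the density clause $d(X,Y)\ge d^3/32$ for all $X\subseteq M$, $Y\subseteq V$ with $|X|\ge\eps'|M|$ and $|Y|\ge\eps'|V|$. One cannot verify this directly over all $2^{\Theta(n)}$ choices of $(X,Y)$: no single random quantity concentrates sharply enough to survive such a union bound, and in fact there need not be a matching edge that behaves well for \emph{every} large $Y$. The way out is to instead establish $\eta$-regularity of $(M,V)$ in $H_G$ for a small constant $\eta\le\min(\eps',d^3/4)$ — which then yields $d(X,Y)\ge\rho-\eta\ge d^2/2-d^3/4\ge d^3/32$ — and to obtain this regularity from the standard counting characterisation, i.e.\ by bounding the single statistic
\[ \sum_{m,m'\in M}d_{H_G}(m,m')\;=\;\sum_{v\in V}\deg_{H_G}(v)^2\;=\;\rho^2|M|^2n\;+\;n\cdot\Var_{v}\!\big(\deg_{H_G}(v)\big). \]
Since all but $O(\eps)n$ vertices have $\deg_{H_G}(v)$ within $O(\eps)n'$ of $\mu$, and the remaining $O(\eps)n$ contribute at most $O(\eps)n\cdot n'^2$, we get $\Var_{v}(\deg_{H_G}(v))\le\eps_4|M|^2$ for a constant $\eps_4=O(\eps)$ that can be made as small as we please; this is exactly the deviation bound that forces $\eta$-regularity. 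Combined with the two minimum-degree conditions, this shows $(M,V)$ is $(\eps',d^3/32)$-super-regular. The technical heart of the argument is thus the spreadness of the random matching — getting good enough concentration of $\deg_{H_G}(v)$ for a random matching of the sparse random graph $\tilde G\cap G(U,W,p)$, and identifying its mean — whereas the cleaning step, the existence of the large matching, and the minimum-degree conditions are routine.
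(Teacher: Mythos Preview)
Your cleaning step and the random greedy matching mirror the paper's setup. The genuine gap is in the density verification: the ``counting characterisation'' you invoke is false. Controlling the single statistic $\sum_{m,m'}d_H(m,m')=\sum_v\deg_H(v)^2$ to equal $(\rho^2+o(1))|M|^2n$ does \emph{not} force $\eta$-regularity of $(M,V)$. Take $H$ to be the disjoint union of two copies of $K_{n/2,n/2}$ on $M\cup V$ with $|M|=|V|=n$: every vertex on either side has degree exactly $n/2$, so $\Var_v(\deg_H(v))=0$ and your displayed identity holds with equality, yet the pair is maximally irregular (pick $X,Y$ from different components). The actual counting characterisation of regularity requires the $C_4$-count, equivalently the codegree \emph{second} moment $\sum_{m,m'}d_H(m,m')^2$; the first moment you bound is strictly weaker. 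The paper avoids this issue entirely and verifies the super-regularity density lower bound directly: for every $X\subseteq V$ of a \emph{fixed small} size $\eta n$, it shows that at most $\gamma n_0$ edges of $M$ are ``bad for $X$'' (have fewer than $d^2|X|/2$ common neighbours in $X$), with failure probability $\le\exp(-\gamma n_0)$. Because $\binom{n}{\eta n}\le\exp(\eta\log(1/\eta)\,n)$ and one chooses $\eta$ with $\eta\log(1/\eta)\le\gamma/4$, the union bound over all such $X$ succeeds --- this is precisely the resolution of the obstacle you correctly flagged (no union bound over $2^{\Theta(n)}$ large sets): union-bound over small sets instead, then cover any large $X'$ by $\Theta(|X'|/(\eta n))$ disjoint pieces of size $\eta n$ and sum to get $e_H(M',X')\ge(|M'|-\gamma n_0)\cdot\tfrac{d^2\eta n}{2}\cdot\tfrac{|X'|}{2\eta n}\ge\tfrac{d^2}{8}|M'||X'|$.

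A secondary issue: your Azuma step for $\deg_H(v)$ is not valid as written, since changing the $i$-th choice $m_i$ of the greedy process can alter all later choices, so the Doob martingale does not have Lipschitz constant $1$. The paper uses stochastic domination instead: conditionally on the history, $\PP[m_i\in N_H(v)\mid m_1,\dots,m_{i-1}]\ge d^2/8$ uniformly for $i\le dn_0/2$, hence $\deg_H(v)$ dominates a binomial with parameters $dn_0/2$ and $d^2/8$, and Chernoff gives $\deg_H(v)\ge d^3n_0/32$. The same domination trick, with the bound $\PP[m_i\text{ good for }X\mid m_1,\dots,m_{i-1}]\ge 1-8\eps/\delta$, yields the exponential tail needed for the union bound over the sets $X$ above.
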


We will prove this lemma at the end of this section.
For Lemma~\ref{lem:tripartite2}, we will use the minimum degree condition and $G(V,p)$ to find additional triangles covering the remaining vertices from $U$ and $W$ that are not covered by $M$. 
We now proceed to the details of this proof.

\begin{proof}[Proof of Lemma~\ref{lem:tripartite2}]
	Given $0 < \delta' \le d <1$, let $\eps'>0$ be given by Lemma~\ref{lem:blow-up} on input $d^3/64$ and let $0<\delta< \delta_0 \le \min \{ \delta' , d^3/22 \}$.
	Furthermore, let $C \ge 8 \delta^{-2}$, let $0<\eps<\delta/4$ be given by Lemma~\ref{lem:trireg} on input $d$, $\tfrac{\delta}{3(1-\delta)}$ (in place of $\delta$), and $\eps'/2$ and let $p \ge C/n$.
	
	Suppose $U$, $V$, $W$ are disjoint sets of size $|V|=n$ and $(1-\delta_0)n \le |U|=|W| \le (1-\delta)n$ with $|V|+|U|+|W| \equiv 0 \pmod 3$, and $G$ is a graph with vertex set $U \cup V \cup W$ such that the pairs $(V,U)$ and $(V,W)$ are $(\eps,d)$-super-regular with respect to $G$.
	Let $\delta_1$ be such that $|U|=|W|=(1-\delta_1)n$ and observe that $\delta \le \delta_1 \le \delta_0$.
	We reveal random edges $G_1 \sim G(U,W,p)$ and $G_2 \sim G(V,p)$ and we have that a.a.s.~any set of size at least $\delta n $ in $V$ contains an edge of $G_2$.
	Indeed, fixed a set of size at least $\delta n$, the probability that it does not contain an edge of $G_2$ is at most $(1-p)^{\binom{\delta n}{2}} \le \exp(-p\binom{\delta n}{2}) \le \exp(-2n)$ since $C \ge 8 \delta^{-2}$, and we conclude by a union bound over the at most $2^n$ choices of such set.
	Then we apply Lemma~\ref{lem:trireg} with $G_1$ to obtain a matching $M \subseteq G_1$ of size $|M| = \left(1-\tfrac{\delta}{3(1-\delta)}\right)|W|=\left(1-\tfrac{\delta}{3(1-\delta)}\right)(1-\delta_1)n$ such that the pair $(M,V)$ is $(\eps'/2,d^3/32)$-super-regular with respect to $H_G(M,V)$.
	As for $x \in (0,1)$ the function $x \rightarrow x/(1-x)$ is increasing and $\delta \le \delta_1$, we have $\left(\tfrac{\delta_1}{3}-\tfrac{\delta}{3(1-\delta)}(1-\delta_1) \right) \ge \left(\tfrac{\delta_1}{3}-\tfrac{\delta_1}{3(1-\delta_1)}(1-\delta_1) \right) \ge 0$.
	Thus by ignoring $\left(\tfrac{\delta_1}{3}-\tfrac{\delta}{3(1-\delta)}(1-\delta_1) \right) n \le d^3 n/64$ edges of $M$, we get a subset $M' \subseteq M$ with $|M'|=(1-4 \delta_1/3) n$.
	
	Next, let $U' = U \setminus V(M')$ and $W'=W \setminus V(M')$ be the sets of vertices in $U$ and $W$, respectively, that are not incident to edges of $M'$.
	Note that both $U'$ and $W'$ have size $\delta_1 n/3$.
	We want to cover these vertices with triangles having the other two vertices in $V$.
	Any vertex $v \in U' \cup W'$ has degree at least $d n$ into $V$ and as $d > 2 \delta_0 \ge 2 \delta_1$ we can pick these triangles greedily for each $v \in U' \cup W'$ using $G_2$.
	Let $V' \subseteq V$ be the vertices that were used for these triangles and observe $|V \setminus V'|=|M'|=(1-4 \delta_1/3)n$.	
	
	To obtain the triangle factor it remains to find a perfect matching in $H_G(M',V \setminus V')$.
	By Lemma~\ref{lem:blow-up} it is sufficient to observe that the pair $(M',V \setminus V')$ is $(\eps',d^3/64)$-super-regular with respect to $H_G(M',V \setminus V')$, which holds because $(M,V)$ is $(\eps'/2,d^3/32)$-super-regular with respect to $H_G(M,V)$.
\end{proof}

Now we turn to the overview of the proof of Lemma~\ref{lem:tripartite}, for which $p \ge C \log n/n$.
We will rely again on Lemma~\ref{lem:trireg}, which gives a large matching $M \subseteq G(U,W,p)$ such that the pair $(M,V)$ is super-regular with respect to the auxiliary graph $H_G(M,V)$.
Starting from this matching $M$, we add more matching edges of $G(U,W,p)$ between the vertices not covered by $M$, and extend $M$ to a perfect matching in $G(U,W,p)$.
This will be possible using Lemma~\ref{lem:auxF} from below and Lemma~\ref{lem:matching}, where we emphasize that the $\log n$-term is essential for this last lemma.
It is then easy to find a perfect matching in $H_G(M,V)$ that gives a triangle factor.

Before we come to the proof of Lemma~\ref{lem:tripartite}, we introduce another auxiliary structure to describe in general which potential edges between $U$ and $W$ we would like to use for the matching $M$.
We define an auxiliary bipartite graph $F=F_{G,V}(U,W)$ with bipartition $U \cup W$, where a pair $(u,w) \in U \times W$ is an edge of $F$ if $u$ and $w$ have at least $d^2 n/2$ common neighbours in $G$, i.e.~$|N_G(u,V) \cap N_G(w,V)| \ge d^2 n/2$.
Similarly, for a set $X \subseteq V$, we call an edge $uw \in E(F)$ \emph{good for $X$} if there are at least $d^2|X|/2$ vertices $x \in X$ that are incident to $u$ and $w$ in $G$, i.e.~$uwx$ is a triangle in $G \cup F$.
We denote the spanning subgraph of $F$ with edges that are good for $X$ by $F_X$.
We prove the following lemma, which will be used in the proof of Lemma~\ref{lem:trireg} to construct $M$ and in the proof of Lemma~\ref{lem:tripartite} to extend $M$.

\begin{lemma}
	\label{lem:auxF}
	For any $\eps,d>0$ with $\eps \le d/2$, the following holds.
	Let $G$ be a graph on $U \cup V \cup W$, with $|V|=n$ and $(1-1/2)n \le |U|=n_0=|W| \le (1 + 1/2) n$, such that $(V,U)$ and $(V,W)$ are $(\eps,d)$-super-regular pairs with respect to $G$.
	Let $F=F_{G,V}(U,W)$ be the bipartite graph described above.
	Then $F$ satisfies the following properties.
	\begin{enumerate}[label=(\roman*)]
		\item \label{claim:min_degree} The minimum degree of $F$ is at least $(1-\eps)n_0$.
		\item \label{claim:degree_god} If $X \subset V$ and $|X| \ge 2\eps n/d$, then all but at most $\eps n_0$ vertices from $U$ have degree at least $(1-2\eps)n_0$ in $F_X$.
	\end{enumerate}
\end{lemma}

\begin{proof}[Proof of Lemma~\ref{lem:auxF}]
	Take any $u \in U$.
	Since $(U,V)$ is an $(\varepsilon,d)$-super-regular pair, we have $|N_G(u,V)| \ge d |V| \ge \varepsilon |V|$ and we can apply Lemma~\ref{lem:MDL} with $A=W$, $B=V$ and $Y=N_G(u,V)$ to conclude that the set
	\[  W_u  = \left\{w \in W: \deg_G(w,Y) \geq (d-\varepsilon) |Y| \right\}\]
	has size at least $(1-\eps)n_0$.
	Since $(d-\varepsilon) |Y| \geq (d-\varepsilon) d n \ge d^2n/2$, all the vertices in $W_u$ are neighbours of $u$ in the auxiliary graph $F$ and, therefore, $|N_F(u)| \ge (1-\varepsilon) n_0$.
	Analogously, we infer that $|N_F(w)| \ge (1-\eps) n_0$ for all $w \in W$ and hence $\delta(F) \ge (1-\eps)n_0$.
	
	If $|X| \ge 2 \eps n/d$, then by Lemma~\ref{lem:MDL} all but at most $\eps n_0$ vertices $u \in U$ have at least $d|X|/2$ neighbours in $X$.
	Fixing any $u \in U$ with this property, and again using Lemma~\ref{lem:MDL}, all but at most $\eps n_0$ vertices $w \in W$ have at least $d^2|X|/4$ common neighbours with $u$ in $X$.
	From~\ref{claim:min_degree} we know that $\delta(F) \ge (1-\eps)n_0$, therefore all but at most $\eps n_0$ vertices from $U$ have degree at least $(1-2\eps)n_0$ in $F_X$.
\end{proof}

Now we prove Lemma~\ref{lem:tripartite} using Lemmas~\ref{lem:trireg} and~\ref{lem:auxF}.

\begin{proof}[Proof of Lemma~\ref{lem:tripartite}]
	Given $d\in (0,1)$, let $\eps'>0$ be given by Lemma~\ref{lem:blow-up} on input $d^3/64$ and let $0<\delta<d^3/64$.
	Furthermore, let $0<\eps<\delta/4$ be given by Lemma~\ref{lem:trireg} on input $d$, $\delta$, and $\eps'/2$, let $C'$ be given by Lemma~\ref{lem:matching} for input $1/4$, set $C = 2 C'/\delta$, and let $p \ge C \log n/n$.
	
	Now let $G$ be a graph on $U \cup V \cup W$, with $|U|=|V|=|W|=n$, such that $(V,U)$ and $(V,W)$ are $(\eps,d)$-super-regular pairs with respect to $G$.
	We reveal the random edges in $G(U,W,p)$ in two rounds as $G_1 \sim G(U,W,p/2)$ and $G_2 \sim G(U,W,p/2)$.
	We apply Lemma~\ref{lem:trireg} with $G_1$ to a.a.s.~obtain a matching $M \subseteq G_1$ of size $|M| = (1-\delta)n$ such that the pair $(M,V)$ is $(\eps'/2,d^3/32)$-super-regular with respect to $H_G(M,V)$.
	Observe that $p \ge C \log n/n$, while the logarithmic factor is not required by Lemma~\ref{lem:trireg}, and thus the constant $C$ does not need to depend on the constant given by this lemma.
	
	Next, let $U' = U \setminus V(M)$ and $W'=W \setminus V(M)$ be the sets of vertices in $U$ and $W$ respectively that are not incident to edges of $M$. Note that both $U'$ and $W'$ have size $\delta n$.
	For the auxiliary graph $F=F_{G,V}(U,W)$ defined above, we consider the subgraph $F[U',W']$ induced by $U'$ and $W'$ and note that by Lemma~\ref{lem:auxF}~\ref{claim:min_degree} it has minimum degree at least $(\delta-\eps) n \ge \frac{3}{4} \delta n$.
	We use $G_2$ to reveal the edges of $F[U',W']$ with probability $p/2$ and a.a.s.~by Lemma~\ref{lem:matching} with $p/2 \ge C' \log (\delta n)/(\delta n)$ we find a perfect matching $M'$ in $G_2 \cap F[U',W']$.
	
	To obtain the triangle factor it remains to find a perfect matching in $H_G(M \cup M',V)$.
	For this we first greedily select a neighbour $v$ from $V$ for each $m \in M'$ and denote the set of vertices $v$ used in this way by $V'$.
	Since $|V'|=|M'|=\delta n$, it follows from the choice of $\delta$ that this is possible and that the pair $(M,V \setminus V')$ is $(\eps',d^3/64)$-super-regular with respect to $H_G(M,V \setminus V')$.
	As $|V \setminus V'|=|M|$, by Lemma~\ref{lem:blow-up}, there is a perfect matching in $H_G(M,V \setminus V')$ and we are done.
\end{proof}

It remains to prove Lemma~\ref{lem:trireg}.

\begin{proof}[Proof of Lemma~\ref{lem:trireg}]
	Given $0 < d,\delta,\eps' < 1$ with $2\delta \le d$, suppose that 
	\[\eps \ll \eta \ll \gamma \ll d,\delta,\eps'\]
	are positive real numbers such that
	\[
	\gamma \le \eps'/2, \quad
	\eta \log(1/\eta) \le \gamma/4, \quad
	\eps\le \eta d/4 \mbox{ and } \eps \le \gamma \delta/56\, .
	\]
	Furthermore, let $C>0$ such that $p \ge C/n$ is large enough for the applications of Chernoff's inequality below.
	
	Let $G$ be a graph on $U \cup V \cup W$, with $|V|=n$ and $|U|=|W|=n_0=(1 \pm 1/2)n$, such that $(V,U)$ and $(V,W)$ are $(\eps,d)$-super-regular pairs with respect to $G$.
	To find the matching $M$, construct a graph $\tilde{F} \subset F=F_{G,V}(U,W)$ by including each edge of $F$ randomly with probability $p$, independently from all other edges.
	
	\begin{claim}
		\label{claim:good_edges}
		A.a.s.~for every $X \subseteq V$ of size $\eta n$ and $U'\subseteq U$ and $W' \subseteq W$ both of size at least $\delta n_0$, the following statements hold.
		\begin{equation}
			\label{eq:edgesF1}
			\text{There are } (1 \pm \eps) e_F(U',W') p \text{ edges in }\tilde{F}[U',W']
		\end{equation}
		and
		\begin{equation}
			\label{eq:edgesF1good}
			\text{ at least  } (1-7\eps/\delta) |U'| |W'| p \text{ edges in } \tilde{F}[U',W'] \text{ are good for } X.
		\end{equation}
	\end{claim}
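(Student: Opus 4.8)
The plan is to deduce both statements by a routine union bound over Chernoff estimates, once the two relevant \emph{deterministic} edge counts $e_F(U',W')$ and $e_{F_X}(U',W')$ have been pinned down via Lemma~\ref{lem:auxF}.

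First I would observe that since $|U'|,|W'|\ge\delta n_0$ and, by Lemma~\ref{lem:auxF}~\ref{claim:min_degree}, every vertex of $U$ has at most $\eps n_0$ non-neighbours in $W$ in $F$, we get $e_F(U',W')\ge|U'|\big(|W'|-\eps n_0\big)\ge(\delta^2-\eps\delta)n_0^2\ge\tfrac12\delta^2 n_0^2$. Next, as $|X|=\eta n\ge 2\eps n/d$ (this uses $\eps\le\eta d/4$), Lemma~\ref{lem:auxF}~\ref{claim:degree_god} is applicable, so all but at most $\eps n_0$ vertices of $U$ have $F_X$-degree at least $(1-2\eps)n_0$; discarding those and restricting to $W'$ gives $e_{F_X}(U',W')\ge(|U'|-\eps n_0)(|W'|-2\eps n_0)$, and dividing the two error terms by $|U'|\ge\delta n_0$ and $|W'|\ge\delta n_0$ respectively yields $e_{F_X}(U',W')\ge\big(1-\tfrac{3\eps}{\delta}\big)|U'||W'|$. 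Both quantities are $\Theta(n^2)$.

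Now I would fix a triple $(X,U',W')$ and apply Chernoff's inequality (Lemma~\ref{lem:chernoff}), using that $e_{\tilde F}(U',W')$ and the number of $X$-good edges of $\tilde F$ inside $U'\times W'$ — which equals $e_{\tilde F\cap F_X}(U',W')$ — are sums of $e_F(U',W')$, resp.\ $e_{F_X}(U',W')$, independent Bernoulli$(p)$ random variables, because $\tilde F$ retains each edge of $F$ independently with probability $p$. For \eqref{eq:edgesF1} the two-sided bound with relative deviation $\eps$ gives the claim except with probability $\exp\!\big(-\tfrac{\eps^2}{2}e_F(U',W')p\big)\le\exp\!\big(-\tfrac{\eps^2\delta^2}{16}Cn\big)$, using $e_F(U',W')p\ge\tfrac12\delta^2 n_0^2\cdot\tfrac{C}{n}$ and $n_0\ge n/2$. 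For \eqref{eq:edgesF1good} the lower tail with absolute deviation $t=\tfrac{4\eps}{\delta}|U'||W'|p$ (a small relative fraction of $e_{F_X}(U',W')p$) gives $e_{\tilde F\cap F_X}(U',W')\ge\big(1-\tfrac{3\eps}{\delta}\big)|U'||W'|p-t=\big(1-\tfrac{7\eps}{\delta}\big)|U'||W'|p$ except with probability $\exp\!\big(-t^2/(2e_{F_X}(U',W')p)\big)\le\exp(-2\eps^2 Cn)$. Taking $C$ large enough in terms of $\eps$ and $\delta$ makes each of these probabilities at most $\exp(-5n)$, which comfortably beats the union bound over the at most $2^n\cdot2^{n_0}\cdot2^{n_0}\le 2^{4n}$ triples $(X,U',W')$; hence a.a.s.\ \eqref{eq:edgesF1} and \eqref{eq:edgesF1good} hold for all of them simultaneously.

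I do not expect a genuine obstacle here: the two points that need attention are (i) checking the size hypothesis $|X|\ge2\eps n/d$ of Lemma~\ref{lem:auxF}~\ref{claim:degree_god}, which is what makes the deterministic count of $X$-good edges essentially $|U'||W'|$, and (ii) ensuring the Chernoff exponents — only linear in $n$, since $p$ may be as small as $C/n$ — dominate the $2^{\Theta(n)}$ union bound, which is precisely why $C$ must be chosen as a sufficiently large constant depending on $\eps$ and $\delta$.
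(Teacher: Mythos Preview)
Your proposal is correct and follows essentially the same route as the paper: bound $e_F(U',W')$ and $e_{F_X}(U',W')$ deterministically via Lemma~\ref{lem:auxF}, apply Chernoff to each of the two binomial edge counts, and union-bound over the $2^{O(n)}$ triples, with $C$ absorbing the constants. Your constants differ slightly from the paper's (you get $1-3\eps/\delta$ where the paper writes $1-6\eps/\delta$, and you use $2^{4n}$ rather than $2^{3n}$ for the union bound), but these are immaterial.
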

	
	\begin{claimproof}
		The expected number of edges between $U'$ and $W'$ in $\tilde{F}$ is by Lemma~\ref{lem:auxF}~\ref{claim:min_degree} at least $e_F(U',W') p \ge |U'|(|W'|-\eps n_0) p \ge C\delta^2 n/8$.
		Using Chernoff's inequality (Lemma~\ref{lem:chernoff}), we obtain with $C \delta^2 \eps^2 \ge 120$ that
		\[ \PP \left[ e_{\tilde{F}}(U',W') \neq (1 \pm \eps) e_F(U', W') p \right] \le 2 \exp \left( - \frac{\eps^2}{3} e_F(U',W') p \right) \le \exp \left(- 4n \right) .\]
		From Lemma~\ref{lem:auxF}~\ref{claim:degree_god} we get that the expected number of edges between $U'$ and $W'$ in $\tilde{F}$ that are good for $X$ is at least $(|U'|-\eps n_0)(|W'|-2\eps n_0) p \ge (1-3 \eps/\delta) |U'| |W'| p$.
		Moreover observe that
		\[
		\left(1-7\frac{\eps}{\delta} \right) = 
		\left(1-\frac{4\eps/\delta}{1-3\eps/\delta}\right) \left(1-3\eps/\delta\right) \le \left(1-\frac{4\eps/\delta}{\sqrt{1-3\eps/\delta}}\right) \left(1-3\eps/\delta\right)\, .
		\]
		Thus by Chernoff's inequality applied with $\tfrac{4\eps/\delta}{\sqrt{1-3\eps/\delta}}$ in place of $\delta$, we get with $C \eps^2 \ge 3$ that
		\[ \PP \left[ |E(\tilde{F}[U',W']) \cap E(F_X)| \le \left(1-7 \frac{\eps}{\delta} \right) |U'| |W'|p  \right] \le 2 \exp \left(- \frac{16 \eps^2}{3\delta^2} |U'| |W'|p \right) \le 2 \exp \left( -4n \right).\]
		Claim \ref{claim:good_edges} follows from the union bound over the at most $2^{3n}$ choices for $X$, $U'$, and $W'$. 
	\end{claimproof}
	
	Using a random greedy process we now choose a matching $M$ of size $(1-\delta)|W|$ between $U$ and $W$ in $\tilde{F}$ as follows.
	Having chosen edges $m_1,\dots,m_t\in \tilde{F}$ with $t<(1-\delta)|W|$, we pick $m_{t+1}$ uniformly at random from all edges of $\tilde{F}$ that do not share an endpoint with any of $m_1,\dots,m_t$.
	This is possible since, by~\eqref{eq:edgesF1} and Lemma~\ref{lem:auxF}~\ref{claim:min_degree}, between any two subsets of $U$ and $W$ of size $\delta |W|$ there is at least one edge of $\tilde{F}$.
	For $i=1,\dots,(1-\delta) n$ we denote by $\cH_i$ the history $m_1,\dots,m_i$.
	It remains to show that $(M,V)$ is $(\eps',d^3/32)$-super-regular with respect to the auxiliary graph $H=H_G(M,V)$.
	
	Observe, that any $m \in M$ has $|N_H(m)| \ge d^2n/2$ by construction.
	We now show that for any $v \in V$ we have $|N_H(v)| \ge d^3 n_0/32$.
	For this, it is sufficient to consider the first $dn_0/2$ edges $m_1, \dots, m_{dn_0/2}$ that are chosen for~$M$ by the random greedy process.
	For $i=1,\dots,dn_0/2$, by~\eqref{eq:edgesF1}, there are at most $e(\tilde{F}) \le (1+\eps)n_0^2p$  available edges to chose $m_i$ from.
	On the other hand, as long as $i \le dn_0/2$, the vertex $v$ has at least $d n_0/2$ neighbours in each of the sets $U$ and $W$ in $G$ that are not covered by the edges in  $\cH_{i-1}$. 
	Let $U' \subseteq U$ and $W' \subseteq W$ be the sets of those vertices and observe that $|U'|,|W'| \ge d n_0/2 \ge \delta n_0$.
	Therefore, by~\eqref{eq:edgesF1} and Lemma~\ref{lem:auxF}~\ref{claim:min_degree}, there are at least  
	\[(1-\eps) e_F(U',W') p \ge (1-\eps) (d/2) (d/2 -\eps) n_0^2 p\]
	edges in $\tilde{F}[(U \cup W) \setminus V(\cH_{i-1})]$ available to choose $m_i$ from, such that both endpoints of $m_i$ are incident to $v$ in $G$.
	
	Hence, for $i=1,\dots,dn_0/2$, we get
	\[\PP[m_i \in N_{H}(v) | \cH_{i-1}] \ge \frac{(1-\eps) (d/2) (d/2 -\eps) n_0^2 p}{(1+\eps) n_0^2 p} = \frac{(1-\eps) (d/2) (d/2 -\eps)}{(1+\eps)} \ge \frac{d^2}{8}.\]
	As this holds independently of the history of the process, this process dominates a binomial distribution with parameters $dn_0/2$ and $d^2/8$.
	Therefore, even though the events are not mutually independent, we can use Chernoff's inequality to infer that $|N_H(v)| \ge d^3n_0/32$ with probability at least $1-1/n^2$.
	Then, by applying the union bound over all $v \in V$, we obtain that indeed a.a.s.~$|N_H(v)| \ge d^3 n_0/32$ for all $v \in V$.
	
	Now we show the regularity of $H=H_G(M,V)$.
	Let $X \subseteq V$ be given with $|X| = \eta n$.
	For $i=0$, $1,\dots,t-1$, we obtain from~\eqref{eq:edgesF1} that there are at most $(1+\eps) (n_0-i)^2 p$ edges in $\tilde{F}[(U \cup W) \setminus V(\cH_i)]$ available for choosing $m_{i+1}$, of which, by~\eqref{eq:edgesF1good}, at least $(1-7\eps/\delta) (n_0 - i)^2 p$ are good for~$X$.
	Then
	\[ \PP \left[ m_{i+1} \text{ bad for }X \Big| \cH_{i-1} \right] \le 1-\frac{(1-7\eps/\delta) (n_0 - i)^2 p}{(1+\eps) (n_0-i)^2 p} = 1-\frac{1-7\eps/\delta}{1+\eps} \le 8 \eps/\delta\, .\]
	As the upper bound on the probability holds independently of the history, this process is dominated by a binomial distribution with parameters $(1-\delta)n_0$ and $8 \eps/\delta$.
	Observe that the expected value of this distribution is $(1-\delta) n_0 \,  8\eps/\delta \le 8 \eps n_0/\delta$.
	Then, with $B_X \subseteq M$ being the edges in $M$ that are not good for $X$ and as $\gamma n_0 \ge 7 \cdot 8 \eps n_0/\delta$, we get from Chernoff's inequality (second part of Lemma~\ref{lem:chernoff}) that
	\[ \mathbb{P}\left[ |B_X|  > \gamma n_0 \right] \le  \exp \left(-\gamma n_0 \right)\,.\]
	
	There are at most $\binom{n}{\eta n} \le \left(e/\eta \right)^{\eta n} \le  \exp (\eta \log(1/\eta) n) \le \exp(\gamma n_0/2)$ choices for $X$ and, thus, with the union bound over all these choices, we obtain that a.a.s.~there are at most $\gamma n_0$ bad edges in $M$ for any $X \subseteq V$ with $|X|=\eta n$.
	
	Therefore for any set $X' \subseteq V$ and $M'\subseteq M$ with $|X'| \ge \eps' n$ and $|M'|\ge \eps' |M|$  we find
	\[ e_H(M',X') \ge (|M'| - \gamma n_0) \frac{d^2 \eta n}{2} \frac{|X'|}{2\eta n} \ge \frac{d^2}{8} |M'| |X'|.\]
	Overall, we can conclude the pair $(M,V)$ is $(\eps',d^3/32)$-super-regular with respect to the auxiliary graph $H_G(M,V)$.
\end{proof}

\subsection{Proof of Lemma~\ref{lem:bipartite}.}
Lemma~\ref{lem:bipartite} easily follows from Lemma~\ref{lem:tripartite2} after splitting the sets $U$ and $V$ appropriately.

\begin{proof}[Proof of Lemma~\ref{lem:bipartite}]
	Let $0<d<1$, choose $\delta'$ with $0 < \delta' \le d/4$ and apply Lemma~\ref{lem:tripartite2} with $\delta'$ and $d/4$ to obtain $\delta_0,\delta,\eps'$ with $\delta' \ge \delta_0 > \delta>\eps'>0$ and $C'>0$.
	Then let $\eps$ with $0 < \eps \le \eps'/8$ and $C \ge 10 C'$.
	Next let $U$ and $V$ be vertex sets of size $|V|=n$ and $3n/4 \le |U|=n_0 \le n$ where $n+n_0 \equiv 0 \pmod 3$ and assume that $(U,V)$ is a $(\eps,d)$-super-regular pair.
	
	The idea is to split the regular pair $(U,V)$ into two regular cherries and then use Lemma~\ref{lem:tripartite2} in each cherry.
	More precisely, we partition $V$ into $V_1$, $U_2$, $W_2$ and $U$ into $V_2$, $U_1$, $W_1$ such that for $i=1,2$ the pairs $(U_i,V_i)$ and $(W_i,V_i)$ are $(\eps',d/4)$-super-regular pairs and $(1-\delta_0)|V_i| \le |U_i|=|W_i| \le (1-\delta) |V_i|$.
	To obtain this we split the sets according to the following random process.
	We put any vertex of $V$ into each of $U_2$ and $W_2$ with probability $q_1$ and into $V_1$ with probability $1-2q_1$.
	Similarly, we put any vertex of $U$ into each of $U_1$ and $W_1$ with probability $q_2$ and into $V_2$ with probability $1-2q_2$.
	We choose $q_1$ and $q_2$ such that the expected sizes satisfy for $i=1,2$
	\[ \EE[|U_i|]=\EE[|W_i|] = \left(1-\frac{\delta_0+\delta}{2}\right) \EE[|V_i|]. \]
	This is possible since such conditions give a linear system of two equations in two variables $q_1$ and $q_2$.
	As $3n/4 \le n_0 \le n$, the solution satisfies $1/7 < q_1,q_2 < 3/7$.
	Then by Chernoff's inequality (Lemma~\ref{lem:chernoff}) and with $n$ large enough there exists a partition such that for $i=1,2$ we have that $|W_i|$, $|U_i|$, and $|V_i|$ are all within $\pm n^{2/3}$ of their expectation 
	and the minimum degree within both pairs $(U_i,V_i)$ and $(W_i,V_i)$ is at least a $d/2$-fraction of the other set.
	For $i=1,2$ we redistribute $o(n)$ vertices between $U_i$ and $W_i$ and move at most one vertex from or to $V_i$ to obtain
	\[ (1-\delta_0)|V_i| \le |U_i| = |W_i| \le (1-\delta)|V_i| \]
	with minimum degree within both pairs $(U_i,V_i)$ and $(W_i,V_i)$ at least a $d/4$-fraction of the other set.
	
	From this we get that for $i=1,2$ the pairs  $(U_i,V_i)$ and $(W_i,V_i)$ are $(\eps',d/4)$-super-regular.
	Also note that $\min \{ |V_1|, |V_2|\} \ge \min \{ (1-2.1q_1)n , (1-2.1q_2)n_0 \} \ge n/10$ by the bound on $q_1$ and $q_2$ from above.
	Then for $i=1,2$ and with $C/n \ge C'/ \min\{ |V_1|, |V_2|\}$ we a.a.s.~get a triangle factor on $V_i$, $U_i$, $W_i$ from Lemma~\ref{lem:tripartite2}. 
	Together these give a triangle factor on $U \cup V$.
\end{proof}

\section{Concluding remarks and open problems}
\label{sec:concluding}

In this paper we close the last gap for the existence of a triangle factor in randomly perturbed graphs $G_\alpha \cup G(n,p)$ and now the threshold is determined for any $\alpha \in [0,1]$ (c.f.~Table~\ref{fig:K3-factor}).
In this last section we would like to discuss a possible improvement for Theorem~\ref{thm:sublinear} and three different directions for generalisations of our results.

\subsection{Optimality of the Sublinear Theorem}
The bound on $p$ in Theorem~\ref{thm:zero} is asymptotically optimal.
However, as discussed earlier, when $m$ is significantly smaller than $n/3$, then $K_{m,n-m} \cup G(n,p)$ contains $m$ pairwise vertex-disjoint triangles already at $p \ge C/n$.
When dealing with the sublinear case $1 \le m \le n/256$ in Theorem~\ref{thm:sublinear}, we require $p \ge C \log n/n$; in fact we do not use the $\log n$-term when $(\log n)^3 \le m \le \sqrt{n}$ (notice indeed that Proposition~\ref{prop:lesqrt} is stated with $p \ge C/n$).
It would be interesting to understand if $p \ge C/n$ also suffices for larger values of $m$ and if Proposition~\ref{prop:gesqrt} can be improved. 
\begin{problem}
	Show that there exists $C >0$ such that for any $\sqrt{n} \le m \le n/32$ and any $n$-vertex graph $G$ with maximum degree $\Delta(G) \le n/32$ and minimum degree $\delta(G)\ge m$, a.a.s.~$G \cup G(n,p)$ contains $m$ pairwise vertex-disjoint triangles, provided $p \ge C /n$. 
\end{problem}
Note that this would improve Theorem~\ref{thm:extremal} when $\alpha<1/3$.
Indeed in our proof we only need the $\log n$-term as we apply Theorem~\ref{thm:sublinear}, while we could use Lemma~\ref{lem:tripartite2} instead of~\ref{lem:tripartite} and avoid the $\log n$-term in the rest of the argument.

\subsection{Larger cliques}
Similarly to a triangle factor, for any integer $r \ge 2$ and any $\alpha \in [0,1]$, one can look at the threshold for the existence of a $K_r$-factor in $G_\alpha \cup G(n,p)$.
The result of Johannson, Kahn and Vu~\cite{johansson2008factors} determines the threshold in $G(n,p)$ at $n^{-2/r} \log^{2/(r^2-r)} n$ and for $\alpha \ge 1-1/r$ the existence in $G_\alpha$ alone is proved by Hajnal-Szemer\'edi Theorem~\cite{hajnal1970proof}.
For other values of $\alpha$, the results of Balogh, Treglown, and Wagner~\cite{balogh2019tilings}, and of Han, Morris and Treglown~\cite{han2019tilings} can be summarised in the following theorem generalising Theorem~\ref{thm:k3_factor}.

\begin{theorem}
	\label{thm:kr_factor}
	For any integers $2 \le k \le r$ and any $\alpha \in (1-\frac{k}{r} , 1-\frac{k-1}{r})$ there exists $C>0$ such that the following holds. 
	For any $n$-vertex graph $G$ with minimum degree $\delta(G) \ge \alpha n$ there a.a.s.~is a $K_r$-factor in~$G \cup G(n,p)$ provided that $p \ge C n^{-2/k}$.
\end{theorem}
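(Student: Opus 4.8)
The plan is to prove Theorem~\ref{thm:kr_factor} along the lines of the triangle case and of~\cite{balogh2019tilings,han2019tilings}: produce an almost-perfect $K_r$-tiling of $G \cup G(n,p)$ and then absorb the remaining $o(n)$ vertices using the minimum degree of $G$. Assume $r \mid n$; otherwise discard a bounded number of vertices. Since $\alpha$ lies strictly inside the interval, $\delta(G) \ge (1 - k/r + \gamma)n$ for some $\gamma = \gamma(r,k,\alpha) > 0$, and it is precisely this slack that will let us avoid a logarithmic factor and work at $p \ge C n^{-2/k}$. The only genuinely new ingredient is a clique analogue of Lemma~\ref{lem:tripartite2}.

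First I would apply the degree form of the regularity lemma (Lemma~\ref{lem:reg}) to $G$ with $\eps \ll d \ll \gamma$, obtaining a reduced graph $R$ on $t$ clusters with $\delta(R) \ge (\alpha - 2d)t > (1 - k/r + \gamma/2)t$. The structure we want to cover $R$ with is, for $r > k$, the blow-up of $J := K_{r-k} \vee \overline{K_k}$: a clique of $r - k$ clusters in $R$ (the \emph{core}) together with $k$ further \emph{pendant} clusters each adjacent in $R$ to all core clusters (the pendant clusters need not be mutually adjacent --- those edges will be supplied by $G(n,p)$); for $r = k$ there is no core and the step below degenerates to producing an almost-spanning $K_k$-tiling of $G(n,p)$, whose threshold is $n^{-2/k}$. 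Using the minimum-degree clique-tiling results of Hajnal--Szemer\'edi, Koml\'os, and K\"uhn--Osthus one covers all of $V(R)$ by vertex-disjoint templates together with smaller ``partial templates'' (cores of fewer than $r-k$ clusters with pendants), exactly as matching edges of $R$ are used in the proof of Theorem~\ref{thm:non-extremal}; each partial template will be filled by a fill-in lemma generalising Lemma~\ref{lem:bipartite}. The usual cleaning (Lemma~\ref{lem:superreg}, moving $o(n)$ vertices into $V_0$, fixing divisibility by $r$) makes every template super-regular and slightly unbalanced in the way the next lemma requires.

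The key lemma, which generalises Lemmas~\ref{lem:trireg}, \ref{lem:tripartite}, and~\ref{lem:tripartite2}, reads roughly: if $X_1, \dots, X_{r-k}$ and $Y_1, \dots, Y_k$ are clusters with every pair $(X_i, X_j)$ and $(X_i, Y_\ell)$ an $(\eps, d)$-super-regular pair of $G$, with the $Y_\ell$ a little smaller than the $X_i$, and $p \ge C n^{-2/k}$, then $G$ together with random edges of this density on the union a.a.s.\ has a $K_r$-factor with one vertex in each cluster. To prove it I would run a random greedy process building a family $\cM$ of vertex-disjoint copies of $K_k$ in the random graph on $Y_1 \cup \dots \cup Y_k$, keeping only those copies that have $\Omega(n^{r-k})$ completions to a $K_r$ using one vertex from each $X_i$; a Janson-inequality count in the spirit of Lemma~\ref{lem:triangle} shows such copies are plentiful already at $p$ of order $n^{-2/k}$, which is exactly why this is the right threshold, and the process runs until $\cM$ covers all but a tiny linear fraction of each $Y_\ell$. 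The few uncovered $Y$-vertices are absorbed into $K_r$'s whose other vertices lie in the core clusters, using the super-regularity of the core and a sprinkle of random edges inside the $X_i$ --- this is the role played by $G(V,p)$ in Lemma~\ref{lem:tripartite2}, and it removes the logarithm. It then remains to embed the clusters $X_1, \dots, X_{r-k}$ against $\cM$, i.e.\ to find a perfect matching between $\cM$ and $X_1 \times \dots \times X_{r-k}$ in the auxiliary $(r-k+1)$-partite structure; tracking the greedy process shows this structure is super-regular, after which a multipartite blow-up lemma supplies the embedding. Applied to every template and partial template, this yields a $K_r$-tiling of $G \cup G(n,p)$ covering all but $o(n)$ vertices.

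The leftover $o(n)$ vertices are absorbed as the set $V_0$ is in the proof of Theorem~\ref{thm:non-extremal}: because $\delta(G) \ge (1 - k/r + \gamma)n$, every such vertex still has $\Omega(n)$ neighbours inside the used clusters, so a final sprinkle (needing only $p = \Omega(1/n)$) lets us complete bounded-size families of $K_r$'s through them while keeping the cluster sizes divisible by $r$. I expect the main obstacle to be the key lemma: in the triangle case the auxiliary graph $H_G(M, V)$ was bipartite, so Hall's theorem (Lemma~\ref{lem:blow-up}) sufficed, whereas here one must embed $r - k \ge 1$ clusters simultaneously against a family of $K_k$'s, which calls for a genuine multipartite blow-up lemma and a careful verification --- via the random greedy construction --- that the auxiliary structure is super-regular. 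A secondary difficulty is the reduced-graph step when $r - k$ is large relative to $k$, where $\delta(G)$ does not force even a single $K_{r-k}$ in $G$ and the random edges must also help build the clique cores; there I would run a stability dichotomy as in Theorems~\ref{thm:non-extremal} and~\ref{thm:extremal}, splitting according to proximity to the extremal multipartite configuration.
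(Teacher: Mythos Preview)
The paper does not prove Theorem~\ref{thm:kr_factor}. It is stated in Section~\ref{sec:concluding} purely as a summary of prior work: the sentence immediately preceding it says that ``the results of Balogh, Treglown, and Wagner~\cite{balogh2019tilings} and of Han, Morris and Treglown~\cite{han2019tilings} can be summarised in the following theorem generalising Theorem~\ref{thm:k3_factor}.'' No proof is given or claimed in this paper; the theorem is simply quoted as background for the discussion of the open boundary cases $\alpha = 1 - k/r$.

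Your proposal is therefore not competing with any argument in the paper. It is, broadly, a reasonable sketch of how the cited papers proceed (regularity, reduced-graph structure, a super-regular template lemma, absorption of leftovers), and you correctly identify the two genuine difficulties: the multipartite analogue of Lemma~\ref{lem:trireg}/\ref{lem:tripartite2}, where the auxiliary matching structure is no longer bipartite, and the reduced-graph covering step when $\delta(G)$ is too small to guarantee large cliques in $G$ itself. But since the paper offers nothing to compare against, there is no meaningful side-by-side to draw here; if you want to see how these obstacles are actually handled, you should consult~\cite{balogh2019tilings} and~\cite{han2019tilings} directly.
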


However, this leaves open the question for the threshold in the boundary cases, i.e. when $\alpha=1-k/r$ for $2 \le k \le r-1$.
A natural extremal structure is given by the complete $\lceil r/k \rceil$-partite graph with $\lfloor r/k \rfloor$ classes of size $kn/r$ and possibly one class of size $(1-\lfloor r/k \rfloor k/r)n$ if $k \nmid r$.
This implies that to get a $K_r$-factor in $G_\alpha \cup G(n,p)$ we need to cover all but $\operatorname{polylog} n$ vertices of the sets of size $kn/r$ with vertex-disjoint copies of $K_k$, i.e.~the threshold is at least $n^{-2/k} (\log n)^{2/\left(k^2-k\right)}$.
Surprisingly, this is not sufficient in the case when $r>3$ and $k \ne 2$; in fact, for small $\eps$, even $n^{-2/k+\eps}$ is not sufficient.

We briefly explain the counterexample for $r=4$ and $k=3$, by constructing an $n$-vertex graph $G$ with minimum degree $\delta(G) \ge (1-3/4)n = n/4$ such that even for small $\eps>0$ and $p \ge n^{-2/3+\eps}$, a.a.s.~the graph $G \cup G(n,p)$ does not contain a $K_4$-factor.
Let $0<\eps \le 1/49$, $p \ge n^{-2/3+\eps}$, and $n^{7 \eps} \le m \le n^{1/7}$.
Then, for two sets $A$ and $B$ with $|A|=n/4-m$ and $|B|=3n/4+m$, we let $G$ be the $n$-vertex graph on $V(G)=A \cup B$ such that $A$ is an independent set, $G[B]$ is given by $|B|/(2m)$ disjoint copies of $K_{m,m}$, and any pair of vertices $(a,b)$ with $a \in A$ and $b \in B$ is an edge.
Clearly $G$ has minimum degree $n/4$.
If $G \cup G(n,p)$ contains a $K_4$-factor, since $A$ only contains $n/4-m$ vertices, at least $m$ copies of $K_4$ must lie within $B$.
However we claim that a.a.s.~the perturbed graph $G \cup G(n,p)[B]$ contains less than $m$ copies of $K_4$ and thus a.a.s.~$G \cup G(n,p)$ does not contain a $K_4$-factor.
Denote by $X$ the number of $K_4$'s in $G \cup G(n,p)[B]$.
It is not hard to see that when $m$ is not too small the best way to build a $K_4$ in $B$ is to choose a $K_{m,m}$ in $B$ and ask for an edge of $G(n,p)$ on each side of $K_{m,m}$.
We get $\mathbb{E}[X] \lesssim \frac{n}{m} m^4 p^2 = o(m)$ and by Markov's inequality a.a.s.~$X < m$ as claimed.
\begin{problem}
	Determine the behaviour of the threshold and the extremal graphs at and around minimum degree $n/4$.
\end{problem}

The counterexamples for other values of $r>3$ and $k\ne 2$ can be constructed in a similar way, by slightly modifying the corresponding extremal graph defined above.
In the case when $k=2$ this construction does not increase the lower bound $n^{-1} \log n$ and, with Theorem~\ref{thm:sublinear} in mind, we believe that Theorem~\ref{thm:main} generalises to $K_r$.
However we believe that in all cases, using our methods, Theorem~\ref{thm:non-extremal} can be extended to $K_r$: i.e.~for all $2 \le k \le r-1$ and with $\alpha=1-k/r$, when $G_\alpha$ is not close (with a similar condition as in Definition~\ref{def:stability}) to the extremal graph defined above, then $p \ge C n^{-2/k}$ is sufficient for a $K_r$-factor in $G_\alpha \cup G(n,p)$.

\subsection{Longer cycles}
Another possible generalisation is to consider factors of longer cycles $C_{\ell}$.
With slight adjustments to our methods (mainly to Theorem~\ref{thm:sublinear}), we are able to show the following theorem.

\begin{theorem}
	\label{thm:cycles}
	For any integer $\ell \ge 3$ there exists $C>0$ such that for any $n$-vertex graph $G$ we can a.a.s.~find at least $\min\{ \delta(G), \lfloor n/\ell \rfloor \}$ pairwise vertex-disjoint $C_\ell$'s in $G \cup G(n,p)$, provided that $p\ge C \log n /n$. 
\end{theorem}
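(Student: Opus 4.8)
The plan is to follow the three-stage architecture of the triangle proof: establish a Sublinear Theorem (elementary, for the regime where $\min\{\delta(G),\lfloor n/\ell\rfloor\}$ is a small fraction of $n$), a Stability Theorem and an Extremal Theorem (both via the regularity method), and then deduce Theorem~\ref{thm:cycles} by exactly the case split used to derive Theorem~\ref{thm:main} from Theorems~\ref{thm:non-extremal}--\ref{thm:sublinear}. The extremal graph is again $K_{m,n-m}$, but now with the two sides in ratio $1:(\ell-1)$: writing $m=\min\{\delta(G),\lfloor n/\ell\rfloor\}$, every $C_\ell$ meeting an edge of $K_{m,n-m}$ uses at least one vertex of the small side, and $G(n,C\log n/n)$ restricted to the large side $B$ contains only $O(\log^\ell n)$ copies of $C_\ell$, so there are at most $m+O(\log^\ell n)$ vertex-disjoint $C_\ell$'s; Definition~\ref{def:stability} is adjusted accordingly, with $|B|=(1-\alpha\pm\beta)n$ now playing the role of the side that must be $P_{\ell-1}$-tiled and $\alpha=1/\ell$ in the boundary case. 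The lower-bound obstruction is the same: with $p\le\tfrac12\log n/n$ a polynomial number of vertices of $B$ have no $G(n,p)$-neighbour inside $B$, and each must lie on a $C_\ell$ using two vertices of the small side, which is impossible once there are more than $m$ of them; hence the $\log n$ is still necessary and the statement is asymptotically optimal.

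For the Sublinear Theorem I would reuse the proof of Theorem~\ref{thm:sublinear} almost verbatim: pre-process high-degree vertices (each can be completed to a $C_\ell$ at the end via two neighbours joined by a $G(n,p)$-path of length $\ell-2$), bound $\Delta(G)$, and split into the ranges $m\le(\log n)^3$, $(\log n)^3\le m\le\sqrt n$, $\sqrt n\le m\le n/c$. In the first range a standard second-moment argument gives $(\log n)^3$ vertex-disjoint $C_\ell$'s inside $G(n,C\log n/n)$ already (for $\ell\ge4$ with comfortable room). The only genuine change is the gadget that upgrades a structure of $G$ to a $C_\ell$: Lemma~\ref{lem:manystars} is unchanged, but a star with centre $c$ and leaf set $L$ yields a $C_\ell$ precisely when $G(n,p)$ supplies a path of length $\ell-2$ with both endpoints in $L$, its $\ell-3$ internal vertices drawn from the currently uncovered set. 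The success probability of a star of size $g$ becomes of order $g^2(C\log n)^{\ell-2}/n$ rather than $g^2 C/n$; with $p\ge C\log n/n$ this extra $(\log n)^{\ell-2}$ slack keeps the expectation-plus-union-bound computation of Proposition~\ref{prop:lesqrt} intact (and reduces to the original when $\ell=3$), and the round-by-round greedy argument of Proposition~\ref{prop:gesqrt} adapts the same way, completing each selected vertex through a short random path inside its neighbourhood plus a reservoir of unused vertices.

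For the Stability and Extremal Theorems I would keep the regularity set-up of Sections~\ref{sec:non-extremal} and~\ref{sec:extremal} with one structural change: the basic building blocks inside the reduced graph $R$ are now short paths in $R$ (the analogues of the cherries and matching edges), a path on $\ell$ clusters $V_1-\cdots-V_\ell$ supplying $\ell-1$ of the edges of each $C_\ell$ while $G(n,p)$ supplies the closing edge between $V_1$ and $V_\ell$. One needs a suitable analogue of Lemma~\ref{lem:stable_cluster} (again adjusting the Balogh--Mousset--Skokan-type argument) showing that non-stability of $G$ forces $R$ to admit a covering by such short paths with not too many long ones. Lemmas~\ref{lem:tripartite}, \ref{lem:tripartite2}, \ref{lem:bipartite} then become statements about finding a $C_\ell$-factor in a ``super-regular $\ell$-path'' with random edges between its ends, proved as follows: first extract vertex-disjoint $(\eps,d)$-regular paths $v_1-\cdots-v_\ell$, each closed by a $G(V_1,V_\ell,p)$-edge, greedily until only a $\delta$-fraction of each cluster remains --- this uses only that any two linear-sized sets contain a random edge (so $p\ge C/n$ suffices here) together with a counting lemma for paths through a chain of regular pairs to guarantee an available regular path at each step; then complete the $\delta$-leftover into a $C_\ell$-factor by choosing a perfect matching between the leftover ends via Lemma~\ref{lem:matching} (this is the only place the $\log n$ is spent, exactly as in Lemma~\ref{lem:tripartite}) and threading paths through the leftover middle clusters with the blow-up lemma (Lemma~\ref{lem:blow-up}) applied along the chain. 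In the non-extremal regime where only $p\ge C/n$ is available, one replaces the perfect completion by the slightly-unbalanced trick of Lemma~\ref{lem:tripartite2}, shrinking the end clusters and mopping up the surplus with $C_\ell$'s built from the minimum-degree condition of $G$ and random edges inside the middle clusters; and the absorption of the exceptional set $V_0$ proceeds as in Section~\ref{sec:non-extremal}, now covering $\ell$ vertices of $V_0$ at a time with short cycles threaded through the blocks so as to keep divisibility by $\ell$ within each block.

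The step I expect to be the main obstacle is the combination of the regular-chain path-counting estimate with the $\delta$-leftover completion for general $\ell$: one must show that the endpoints of the randomly/greedily chosen regular paths can be paired by a perfect matching of $G(V_1,V_\ell,p)$ whose pairs then thread cleanly through the leftover middle clusters --- essentially a blow-up-lemma statement for $C_\ell$ in a super-regular $\ell$-path --- which for $\ell=3$ is exactly the content of Lemma~\ref{lem:trireg} and Lemma~\ref{lem:tripartite} but for $\ell\ge4$ requires controlling how $\ell-2$ super-regular pairs interact along a chain rather than a single pair. A secondary but unavoidable difficulty is the bookkeeping in the Sublinear Theorem for $\ell\ge4$: the path of length $\ell-2$ completing a star genuinely needs vertices from outside that star, so one must argue these internal vertices can always be chosen disjoint from everything already used, and it is precisely the $(\log n)^{\ell-2}$ gain from $p\ge C\log n/n$ that makes the union bounds close.
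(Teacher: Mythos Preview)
The paper does not actually prove Theorem~\ref{thm:cycles}: it is stated in the concluding remarks with the comment that ``with slight adjustments to our methods (mainly to Theorem~\ref{thm:sublinear})'' it can be shown, and the proof is deferred to the follow-up paper~\cite{BPSS_cycles}. So there is no proof here to compare against beyond that one hint.

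Your three-stage architecture and your treatment of the Sublinear Theorem are exactly in line with that hint. You correctly flag that completing a star to a $C_\ell$ now needs a random path of length $\ell-2$ whose $\ell-3$ internal vertices lie outside the star, and that this bookkeeping (disjointness of internal vertices across stars) is the genuine new work --- this is presumably what the authors mean by ``mainly to Theorem~\ref{thm:sublinear}''.

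Where your sketch diverges from what the hint suggests is in the regularity-based Stability Theorem. You propose replacing the cherry/matching-edge blocks in $R$ by paths on $\ell$ clusters, so that each $C_\ell$ takes $\ell-1$ edges from $G$ and only one from $G(n,p)$. But with $\delta(R)\approx t/\ell$ there is no reason $R$ should admit a $P_\ell$-factor (already $K_{t/\ell,(\ell-1)t/\ell}$ fails for even $\ell\ge 4$, and non-extremal $R$ need not behave better), and an analogue of Lemma~\ref{lem:stable_cluster} producing such a decomposition would be a substantial new result, not a ``slight adjustment''. The reading more consistent with the paper's hint is that the decomposition of $R$ into cherries and matching edges is kept \emph{unchanged}, and only Lemmas~\ref{lem:tripartite}--\ref{lem:bipartite} are upgraded to produce $C_\ell$-factors inside a single super-regular pair or cherry: one splits the two (resp.\ three) clusters into $\ell$ sub-clusters forming a ``super-regular $\ell$-path'' internally and closes each cycle with one random edge, exactly the mechanism you describe for Lemma~\ref{lem:tripartite}, but applied \emph{inside} each block rather than across $\ell$ different clusters of $R$. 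This costs more random edges per cycle but avoids the structural problem in $R$ entirely, which is why the authors can call it a slight adjustment.
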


We will discuss this in~\cite{BPSS_cycles_paper,BPSS_cycles}, including the corresponding variants for the finer statements, Theorems~\ref{thm:non-extremal}--\ref{thm:sublinear}.

\subsection{Universality}

Even further, we call a graph $2$-universal if it contains any $n$-vertex graph of maximum degree $2$ as a subgraph.
It is known that the threshold for $2$-universality is asymptotically the same as for a triangle factor when $\alpha <1/3$ or $\alpha \ge 2/3$~\cite{aigner1993embedding,ferber2019optimal,parczyk20202}.
In a follow up paper we will expand our approach and prove that this also holds for the remaining cases, i.e.~$\log n/n$ gives the threshold for $\alpha=1/3$ and $1/n$ gives the threshold when $1/3<\alpha<2/3$.

\section*{Acknowledgements}
We thank an anonymous referee for their insightful and valuable comments.

\bibliographystyle{abbrv}
\bibliography{references}

\appendix

\section{Proof of Lemmas~\ref{lem:triangle},~\ref{lem:matching}, and~\ref{lem:stable_cluster}}
\label{sec:appendix}	
For the proof of Lemma~\ref{lem:triangle} we use Janson's inequality (see e.g.~\cite[Theorem 2.14]{JLR}).
\begin{lemma}[Janson's inequality]
	\label{lem:janson}
	Let $p \in (0,1)$ and consider a family $\{ H_i \}_{i \in \cI}$ of subgraphs of the complete graph on the vertex set $[n]=\{1,\ldots,n\}$. For each $i \in \cI$, let $X_i$ denote the indicator random variable for the event that $H_i \subseteq \Gnp$ and, write $H_i \sim H_j$ for each ordered pair $(i,j) \in \cI \times \cI$ with $i \neq j$ if $E(H_i) \cap E(H_j) \not= \emptyset$.
	Then, for $X = \sum_{i \in \cI} X_i$, $\mathbb{E}[X] = \sum_{i \in \cI} p^{e(H_i)}$,
	\begin{align*}
		\delta = \sum_{H_i \sim H_j} \mathbb{E}[X_i X_j] = \sum_{H_i \sim H_j} p^{e(H_i) + e(H_j) - e(H_i \cap H_j)}
	\end{align*}
	and any $0 < \gamma < 1$ we have
	\begin{align*}
		\mathbb{P} [X \le (1-\gamma) \mathbb{E}[X]] \le \exp \left(-\frac{\gamma^2 \mathbb{E}[X]^2}{2(\mathbb{E}[X] + \delta)} \right).
	\end{align*}
\end{lemma}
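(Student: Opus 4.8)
This is the classical lower‑tail form of Janson's inequality, and the plan is to prove it by the exponential‑moment method; write $\mu:=\EE[X]$. The starting point is Markov's inequality applied to $e^{-tX}$: for every $t\ge 0$,
\[
\PP\big[X\le(1-\gamma)\mu\big]=\PP\big[e^{-tX}\ge e^{-t(1-\gamma)\mu}\big]\le e^{t(1-\gamma)\mu}\,\EE\big[e^{-tX}\big],
\]
so everything reduces to an upper bound on the Laplace transform $\EE[e^{-tX}]$ that is sensitive to the dependency structure measured by $\delta$.

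The core step is to establish, for every $t\ge0$, the estimate
\[
\EE\big[e^{-tX}\big]\le\exp\!\Big(-(1-e^{-t})\,\mu+\tfrac12(1-e^{-t})^2\,\delta\Big).
\]
To prove it, enumerate the summands as $X_1,\dots,X_N$, set $u:=1-e^{-t}\in[0,1)$, and use $e^{-tX_i}=1-uX_i$, so that $\EE[e^{-tX}]=\EE\big[\prod_{i=1}^N(1-uX_i)\big]$. Writing $P_k:=\EE\big[\prod_{i\le k}(1-uX_i)\big]$ one telescopes via $P_k=P_{k-1}-u\,\EE\big[X_k\prod_{i<k}(1-uX_i)\big]$, and the point is to lower‑bound $\EE\big[X_k\prod_{i<k}(1-uX_i)\big]$. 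Here one exploits that $X_k$ is independent of the family $\{X_i:i\not\sim k\}$ (the corresponding subgraphs of $K_n$ share no edges), so the factors indexed by non‑neighbours of $k$ pull through essentially for free and only the neighbours of $k$ contribute, at a cost of order $u^2\sum_{i<k,\,i\sim k}\EE[X_iX_k]$; summing over $k$ gives $P_N\le 1-u\mu+\tfrac12u^2\delta$, and $1+x\le e^x$ completes the estimate. This is exactly the computation underlying \cite[Theorem~2.18]{JLR}, which I would reproduce.

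It then remains to optimize over $t$. Feeding the Laplace estimate into the Markov bound and using $t\ge 1-e^{-t}\ge t-\tfrac12t^2$ (both valid for $t\ge0$) gives
\[
\PP\big[X\le(1-\gamma)\mu\big]\le\exp\!\Big(-\gamma\mu t+\tfrac12(\mu+\delta)t^2\Big),
\]
and the choice $t:=\gamma\mu/(\mu+\delta)$, which lies in $(0,1)$, produces exactly $\exp\big(-\gamma^2\mu^2/(2(\mu+\delta))\big)$, as claimed.

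The step I expect to be the main obstacle is the Laplace‑transform estimate: the naive bound $\prod_i(1-uX_i)\ge 1-u\sum_iX_i$ is too lossy, since it yields a spurious contribution of order $\mu^2$ from the independent pairs, so one must carefully isolate the adjacent pairs — which alone enter $\delta$ — from the independent ones, and it is precisely the independence of each $X_k$ from its non‑neighbours that makes this work. Alternatively, one may first prove the special case $\PP[X=0]\le e^{-\mu+\delta/2}$ (via the chain rule for conditional probabilities together with Harris's inequality) and then deduce the full lower tail by a random‑deletion argument; and since the statement is classical, the quickest route in practice is simply to invoke it from the literature, as the paper does.
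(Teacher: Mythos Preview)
The paper does not prove this lemma at all; it states it with a reference to \cite[Theorem~2.18]{JLR} and uses it as a black box. Your sketch therefore goes well beyond what the paper does, and the overall strategy --- bound the Laplace transform of $X$ and then optimise over $t$ --- is the standard one; your optimisation step is carried out correctly.

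There is, however, a genuine gap in the Laplace-transform step. You claim that the telescoping yields the \emph{additive} bound $P_N\le 1-u\mu+\tfrac12 u^2\delta$, to which you then apply $1+x\le e^x$. This cannot be right as stated: the right-hand side can be negative (take $\delta$ small and $u\mu$ large), whereas $P_N\ge 0$. The point is that once you factor out the non-neighbours of $k$ using independence, their contribution does not disappear --- it remains as the multiplicative factor $Q_{k-1}:=\EE\big[\prod_{j<k,\,j\not\sim k}(1-uX_j)\big]$, which is at most $1$, so it cannot be dropped in a lower bound on $\EE[X_k\prod_{i<k}(1-uX_i)]$. The correct move is to bound $Q_{k-1}\ge P_{k-1}$ (each omitted factor is $\le 1$), giving
\[
P_k\le P_{k-1}\Big(1-up_k+u^2\!\!\sum_{i<k,\,i\sim k}\EE[X_iX_k]\Big),
\]
and then apply $1+x\le e^x$ to each ratio $P_k/P_{k-1}$; the \emph{multiplicative} telescoping yields $P_N\le\exp\big(-u\mu+\tfrac12 u^2\delta\big)$ directly. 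A second omission is that independence of $X_k$ from its non-neighbours is not enough by itself to control the neighbour terms: to upper-bound $\EE[X_kX_i\prod_{j\not\sim k}(1-uX_j)]$ by $\EE[X_kX_i]\,Q_{k-1}$ one needs Harris's (FKG) inequality, which you invoke only for the alternative $\PP[X=0]$ route but which is equally essential here.
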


\begin{proof}[Proof of Lemma~\ref{lem:triangle}]
	Let $d>0$ and pick $C > 68/d^3$ and $p>C/n$.
	Let $\cI = E_G(U,W) \times V$ and for each $i=(uw,v) \in \cI$ let $H_i$ be the path $uvw$ on the three vertices $u \in U$, $v \in V$ and $w \in W$.
	We want to apply Lemma~\ref{lem:janson} to the family $\{H_i\}_{i \in \cI}$.
	Using the same notation, we have $\EE[X] = |\cI| p^2 \ge d n^3 p^2$ and $\delta \le n^2 (2n) n p^3$, as for $i=(uw,v)$ and $j=(u'w',v')$ with $i, j \in \cI$ and $i \neq j$ we have $H_i \sim H_j$ if and only if $v=v'$ and precisely one of the equalities $u=u'$ and $w=w'$ holds.
	Then the Janson's inequality with $\gamma=1/2$ gives 
	\[
	\mathbb{P} \left[X \le  \frac{\EE[X]}{2}\right] \le \exp \left(-\frac{\gamma^2}{2} \frac{dn^3p^2}{1+2np/d} \right) \le \exp \left( - \frac{1}{17}d^2n^2p \right) \le 2^{-4n/d},
	\]
	as $p>C/n$ and $C > 68/d^3$.
	Thus with probability at least $1-2^{-4n/d}$ we have $X > \EE[X]/2$ and there is at least one path $H_i$ on vertices $u,v,w$ for some $i=(uw,v) \in \cI$ in $G(n,p)$.
	As $uw$ is an edge of $G$ by definition of $\cI$, we get a triangle in $G \cup G(U \cup W,V,p)$ with one vertex in each of $U,V,W$, as required.
\end{proof}

Next, we prove that if a bipartite graph $G$ has large minimum degree, its random subgraph $G_p$ contains a perfect matching if $p \ge C \log n/n$.

\begin{proof}[Proof of Lemma~\ref{lem:matching}]
	For $\eps>0$ let $C \ge 3/\eps$.
	Let $G$ be a bipartite graph with partition classes $U$ and $W$ of size $n$ and minimum degree at least $(1/2+\eps)n$.
	Let $p \ge C \log n/n$ and suppose that $G_p$ does not have a perfect matching. 
	Then, by Hall's Theorem, there exists a set $S \subseteq U$ or $S \subseteq W$ with $|S| > |N(S)|$. 
	Let $S$ be a minimal such set, then $|S|=|N(S)|+1$, $|S| \leq \lceil n/2 \rceil$, and every vertex in $|N(S)|$ is adjacent to at least two vertices of $S$.
	
	Let $1 \leq s \leq \lceil n/2 \rceil$ and let $A_s$ denote the event that there is a minimal set $S$ of size $s$ violating Hall's condition. 
	If $s=1$, then $S$ is an isolated vertex. 
	Let $X_n$ be the number of isolated vertices in $G_p$, then for $n$ large enough
	\[ 
	\EE[X_n] \le 2n \left(1-\frac{C \log n}{n}\right)^{(1/2+\eps) n} \le 2n \exp(-C (1/2+\eps) \log n) \le 2 n^{1-C(1/2+\eps)}
	\]
	and $\lim_{n \rightarrow \infty} \EE[X_n] = 0$. 
	Therefore, a.a.s.~$G_p$ does not contain isolated vertices.
	
	If $s \geq 2$, then
	\begin{align*}
		\PP[A_s] & \leq \binom{n}{s} \binom{n}{s-1} \left( \binom{s}{2} p^2 \right)^{s-1} \left( 1-p \right)^{s \left( (1/2+\varepsilon) n - s + 1 \right)}  \leq \left( \frac{en}{s} \right)^s \left( \frac{en}{s-1} \right)^{s-1} s^{2(s-1)} (1-p)^{s \varepsilon n} \\
		& \leq e^{2s-1} \frac{s^{2(s-1)}}{s^s (s-1)^{s-1}} n^{2s-1} \exp(-s \varepsilon C \log n) \leq e^{2s-1} n^{-s(\varepsilon C - 2) -1} = \frac{1}{e n} \left( \frac{e^2}{n^{\varepsilon C -2}} \right)^s.
	\end{align*}
	
	We have
	\[\PP[A_s] \leq \frac{1}{e n} \left( \frac{e^2}{n} \right)^s.\]
	The probability that there is a minimal set $S$ violating Hall's condition is then bounded by
	\[ \sum_{s=2}^{\lceil n/2 \rceil} \PP[A_s] 
	\leq \sum_{s=2}^{\lceil n/2 \rceil} \frac{1}{e n} \left( \frac{e^2}{n} \right)^s 
	\leq \frac{1}{e n} \sum_{s \geq 0} \left( \frac{e^2}{n} \right)^s
	= \frac{1}{e n} \frac{1}{1-e^2/n}.\]
	As $\lim_{n \rightarrow \infty} \frac{1}{e n} \frac{1}{1-e^2/n} = 0$, a.a.s.~$G_p$ does not violate Hall's condition and, therefore, contains a perfect matching.
\end{proof}

Finally we prove Lemma~\ref{lem:stable_cluster} following the argument of~\cite[Lemma $12$ and $13$]{BMS_cover}.
For this we consider a largest matching $M$ in the reduced graph $R$ and assume that $|M|<(\alpha+2d)t$.
Then we will find a set $I \subset V(R)$ of size roughly $(1-\alpha) t$ which contains very few edges.
With the properties of the reduced graph, we conclude that the original graph $G$ is $(\alpha,\beta)$-stable.

\begin{proof}[Proof of Lemma~\ref{lem:stable_cluster}]
	Given $0<\beta<\tfrac{1}{12}$ let $0<d<10^{-4} \beta^6$.
	Then let $0<\eps<d/4$, $4 \beta \le \alpha \le \tfrac 13$, and $t \ge \tfrac{10}{d}$.
	Next let $G$ be an $n$-vertex graph on vertex set $V$ with minimum degree $\delta(G) \ge (\alpha -\tfrac12 d)n$ that is not $(\alpha,\beta)$-stable and let $R$ be the $(\eps,d)$-reduced graph for some $(\eps,d)$-regular partition $V_0,\dots,V_t$ of $G$.
	We observe for the minimum degree of $R$ that $\delta(R) \ge (\alpha-2d) t$ because, otherwise, there would be vertices with degree at most $(\alpha-2d)t(n/t)+\eps n < (\alpha - d/2)n - (d+\eps)n$ in $G'$ contradicting~\ref{prop:degree}.
	
	Let $M$ be a matching in $R$ of maximal size.
	Observe that $|M| \ge \min \{ \delta(R),\lfloor t/2 \rfloor \} \ge (\alpha-2d)t$. 
	We assume $|M| < (\alpha+2d)t$ and show that $G$ must then be $(\alpha,\beta)$-stable, which is a contradiction.
	Let $U = V(R) \setminus V(M)$.
	We shall first show that there exists a set $I \subset V(R)$ of size $|U|+|M|$ that contains only few egdes.
	
	Since $M$ is a matching of maximal size in $R$, $U$ is independent.
	Moreover, given an edge $xy \in M$, either $x$ or $y$ has at most one neighbour in $U$.
	Then we can split $V(M)$ into two disjoint subsets $X$ and $Y$ by placing for each matching edge $xy$ of $M$ one of its endpoints with at most one neighbour in $U$ into the subset $X$, and the other endpoint into the subset $Y$.
	We claim that $I= U \cup X$ contains only few edges.
	We have $e(U)=0$, $e(X,U) \le |X|$, and we can upper bound $e(X)$ as follows.
	Let $xy \in E(X)$ and denote by $x'$ and $y'$ the vertices matched to $x$ and $y$ in $M$ respectively.
	Then $x', y' \in Y$ and either $x'$ or $y'$ has at most one neighbour in $U$.
	Otherwise, there would be two distinct vertices $x'', y'' \in U$ such that $x'x''$ and $y'y''$ are edges of $R$, and we could apply the rotation $M \setminus \{xx',yy'\} \cup \{xy, x'x'',y'y''\}$ and get a larger matching, contradicting the maximality of $M$.
	Therefore, $e(X) \le |X||Z|$, where $Z=\{v \in Y| \deg(v,U) <2 \}$.
	Observe that 
	\[e(Y,U) \le  (|Y| - |Z|) |U| + |Z|\]
	and 
	\[  e(Y,U) \ge |U| \delta(R) - e(X,U) \ge |U| \delta(R) - |X|\]
	where we use that since $U$ is independent, a vertex in $U$ can have neighbours only in $X$ and $Y$.
	We get
	\[ |Z| \le  \frac{(|Y| -\delta(R)) |U|+|X|}{|U|-1} < \frac{4dt |U| + |X|}{|U|-1} \le  5dt, \]
	where the first inequality comes from the upper and lower bound on $e(Y,U)$, the second one from $|Y| = |M| < (\alpha+2d)t$ and $\delta(R) \ge (\alpha-2d)t$, and the last one from $|U|=t-2|M| \ge t/4$, $|X| = |M| < t/2$ and $10/t \le d$.
	Hence, $e(X) \le |X| 5dt$.
	
	Therefore, the set $I=U \cup X$ has size
	\[ |I| = |V(R)| - |M| = \left(1-\alpha \pm 2d\right)t\]
	because $|Y|=|M| = (\alpha \pm 2d)t$
	and contains at most
	\[e(I) \le e(X) + e(X,U)+ e(U) \le |X|5dt+|X| \le (5 dt+1) (\alpha +2d) t \le 6 \alpha dt^2\]
	edges, where we use $|X|=|M| = (\alpha \pm 2d)t$, $d \le \alpha/20$ and $10/t \le d$ in the last inequality.
	
	We now move to the original graph $G$ and prove that the existence of such set $I$ in $R$ implies that $G$ is $(\alpha,\beta)$-stable.
	Let $B''=\bigcup_{i \in I} V_i$ be the union of the clusters $I$.
	Then $|B''| = (1-\alpha \pm 3d)n$ and $e(B'') \le 6 \alpha dn^2$.
	Let 
	\[B'=\{v \in B''| \deg(v,B'') \le \sqrt{d} n\}.\]
	Then $e(B'') \ge (|B''|-|B'|) \sqrt{d} n$ and, therefore, all but at most $6 \alpha \sqrt{d}n $ vertices of $B''$ belong to $B'$ and, thus, $|B'| = (1-\alpha \pm 4\sqrt{d})n$.
	Let
	\[A'=\{v \in V | \deg(v,B') \ge (1-\beta/4)|B'|\}\]
	and note that $A'\cap B'=\emptyset$.
	Observe that if $v \in B'$, then	
	\begin{equation}
		\label{eq:B'}
		\deg(v,V \setminus B') \ge \delta(G) - \deg(v,B') \ge (\alpha -d/2 -\sqrt{d})n \ge (\alpha - 2\sqrt{d})n.
	\end{equation}
	With $|V \setminus B'| \le (\alpha + 4 \sqrt{d})n$ this implies 
	\[ e(B',V \setminus B') \ge (\alpha -2 \sqrt{d}) n |B'| \ge (|V \setminus B'|-6 \sqrt{d}n) |B'| \]
	and with the definition of $A'$ and the fact that $|V \setminus (B' \cup A')| = |V \setminus B'| - |A'|$, we get
	\[
	e(B',V \setminus B') 
	\le |V \setminus (B' \cup A')| (1-\beta/4) |B'| + |A'| |B'| 
	= \big( |V \setminus B'| - |V \setminus (B' \cup A')| \beta /4 \big) |B'|\, .
	\]
	The last two inequalities imply that all but at most $24 \sqrt{d}n/\beta $ vertices of $V \setminus B'$ belong to $A'$.
	Therefore we can bound the size of $A'$ as follows
	\[
	|A'| \ge |V \setminus B'| - 24 \sqrt{d}n/\beta \ge \alpha n - 4 \sqrt{d}n  - 24 \sqrt{d}n/\beta \ge \alpha n - \beta^2 n\, ,
	\]
	and
	\[
	|A'| \le |V \setminus B'| \le \alpha n + 4 \sqrt{d}n \le \alpha n + \beta^2 n\, .
	\]
	where we used in both inequalities that $4 \sqrt{d}n + 24 \sqrt{d}n/\beta \le \beta^2 n$, as $d \le 10^{-4}  \beta^6$.
	
	It follows that we have built two sets $A'$ and $B'$ such that $|A' \cup B'| \ge n-\beta^2 n$, $|A'| = \alpha n \pm \beta^2 n$ and $|B'| = (1-\alpha)n \pm \beta^2 n$.
	Moreover each vertex of $A'$ has at least $(1-\beta/4)|B'|$ neighbours in $B'$ by the definition of $A'$, and each vertex of $B'$ has at least $(1-\beta/2)|A'|$ neighbours in $A'$.
	This can be justified as follows.
	Given $v \in B'$,
	\begin{align*}
		\deg(v,A')&=\deg(v,V \setminus B') - \deg(v, V \setminus (A' \cup B')) \\
		& \ge \deg(v,V \setminus B') - |V \setminus (A' \cup B')| \\
		& \ge (\alpha - 2 \sqrt {d})n - 24\sqrt{d}n/\beta \ge (\alpha-\beta^2)n\\
		& \ge \frac{\alpha - \beta^2}{\alpha + \beta^2}|A'| \ge ( 1-\beta/2 ) |A'|\, ,
	\end{align*}
	where we used that $A'$ and $B'$ are disjoint, the inequalities~\eqref{eq:B'}, $|V \setminus (A' \cup B')| \le 24 \sqrt{d}n/\beta $ and $2 \sqrt {d}+ 24\sqrt{d}/\beta \le \beta^2$, the upper bound on $|A'|$ and the inequality $\alpha \ge 4 \beta$.
	
	Now we need to take care of the vertices of $G$ not yet covered by $A' \cup B'$, i.e.~the at most $\beta^2 n$ vertices in $V \setminus (A' \cup B')$.
	Let $v$ be one such vertex. 
	Then $\deg(v,A' \cup B') \ge \delta(G) - |V \setminus (A' \cup B')| \ge (\alpha - d/2)n -\beta^2 n \ge \alpha n/2$. 
	Therefore, it is possible to add these vertices to $A'$ and $B'$ to obtain $A \supseteq A'$ and $B \supseteq A'$ such that each vertex of $B$ has at least $\alpha n/4$ neighbours in $A$, and each vertex of $A$ has at least $\alpha n/4$ neighbours in $B$.
	As we add at most $\beta^2 n$ vertices, we have $|A| = (\alpha \pm 2 \beta^2)  n$ and $|B| = (1-\alpha\pm 2\beta^2)n$.
	Moreover, all but at most $\beta^2 n \le \beta n$ vertices from $A$ have degree at least 
	\[
	(1-\beta/4)|B'|\ge (1-\beta/4)(|B|-\beta^2 n) \ge (1-\beta)|B|
	\]
	into $B$, where we used that $|B| \le |B'|+\beta^2 n$, $|B| \ge (1-\alpha-2\beta^2)n \ge (2/3-2\beta^2)n$ and $\beta < 1/12$.
	Similarly, all but at most $\beta^2 n \le \beta n$ vertices from $B$ have degree at least $(1-\beta/2)|A'|\ge (1-\beta)|A|$ into $A$.
	Moreover, as $B'$ is a subset of $B''$ and we add at most $\beta^2 n$ vertices to $B'$ to get $B$, we have $e(B) \le e(B'') + \beta^2 n^2 \le (6 \alpha d + \beta^2)n^2 \le \beta n^2$. 
	Therefore, $G$ is $(\alpha,\beta)$-stable according to Definition~\ref{def:stability}.
\end{proof}

\end{document}